\definecolor{amaranth}{rgb}{0.9, 0.17, 0.31}	
\definecolor{myblue}{rgb}{0.2,0,0.9}
\definecolor{auburn}{rgb}{0.43, 0.21, 0.1}
\definecolor{bittersweet}{rgb}{1.0, 0.44, 0.37}
\definecolor{blue-violet}{rgb}{0.54, 0.17, 0.89}
\newtheorem{pro}{Proposition}[section]
\newtheorem{pr}{Problem}[section]
\newtheorem{theorem}{Theorem}[section]
\newtheorem{lem}{Lemma}[section]
\newtheorem{as}{Assumption}[section]
\newtheorem{rem}{Remark}[section]
\def\b{\beta}
\def\d{\delta}
\def\D{\Delta}
\def\e{\epsilon}
\def\g{\gamma}
\def\l{\lambda}
\def\L{\Lambda}
\def\m{\mu}
\def\s{\sigma}
\def\t{\theta}
\def\z{\zeta}
\def\k{\kappa}
\numberwithin{equation}{section}
\begin{document}
	
\title{\LARGE \textbf{A Problem of Finite-Horizon Optimal Switching and \\Stochastic Control for Utility Maximization\footnote{Zhou Yang is supported by NNSF of China (No. 12171169, 12371470). Junkee Jeon is supported by the National Research Foundation of Korea(NRF) grant funded by the Korea government(MSIT) [Grant No. RS-2023-00212648]. \\}}}

	\author{
		  Zhou Yang\footnote{E-mail: \href{mailto:yangzhou@scnu.edu.cn}{\tt yangzhou@scnu.edu.cn}\;\;\;School of Mathematical Science, South China Normal University, China. }	
		\and 
      Junkee Jeon \footnote{E-mail: \href{mailto:junkeejeon@khu.ac.kr}{\tt junkeejeon@khu.ac.kr}\;\;\;Department of Applied Mathematics, Kyung Hee University, Republic of Korea.} 
	}
	
	\date{\today}
	
	\maketitle \pagestyle{plain} \pagenumbering{arabic}
	
    \abstract{In this paper, we undertake an investigation into the utility maximization problem faced by an economic agent who possesses the option to switch jobs, within a scenario featuring the presence of a mandatory retirement date. The agent needs to consider not only optimal consumption and investment but also the decision regarding optimal job-switching. Therefore, the utility maximization encompasses features of both optimal switching and stochastic control within a finite horizon. To address this challenge, we employ a dual-martingale approach to derive the dual problem defined as a finite-horizon pure optimal switching problem. By applying a theory of the double obstacle problem with non-standard arguments, we examine the analytical properties of the system of parabolic variational inequalities arising from the optimal switching problem, including those of its two free boundaries. Based on these analytical properties, we establish a duality theorem and characterize the optimal job-switching strategy in terms of time-varying wealth boundaries. Furthermore, we derive integral equation representations satisfied by the optimal strategies and provide numerical results based on these representations.}
	
	\vspace{1.0cm} {\em Keywords} : utility maximization, optimal switching problem,  double obstacle, job-switching with costs, dual-martingale approach, a system of parabolic variational inequalities \\

\newpage

\section{Introduction}

As the importance of job-switching options in individuals' financial planning has increased, research on the consumption-investment problem with a consideration of job-switching options has recently emerged in the field of financial mathematics literature. (Refer to \citet{SHIM2014145}, \citet{SKS18}, \citet{LSS19}, \citet{JEON2023127777}, \citet{JeonShim}, and the references mentioned therein). However, none of these studies have taken into account both practical factors, namely, job-switching costs and mandatory retirement dates.

Drawing inspiration from this gap, we investigate an agent's utility maximization problem that takes into account both job-switching options and mandatory retirement. Similar to the model of \citet{JeonShim}, we assume that the agent has only two jobs (or job types) available. One of the jobs provides a higher income compared to the other, but it comes with a higher utility cost or disutility due to labor. Additionally, the agent has the flexibility to switch from their current job to another at any time before the mandatory retirement date by incurring a fixed cost. Therefore, since the agent should consider the mandatory retirement date when making decisions not only about optimal consumption and investment but also about optimal job-switching, the utility maximization problem faced by the agent becomes a non-standard stochastic problem, encompassing the characteristics of both finite-horizon optimal switching and stochastic control.

To address this non-standard stochastic problem, we employ the dual-martingale approach, as in \citet{JeonShim}. If we apply the dynamic programming principle, similar to \citet{SHIM2014145}, we end up with a system of non-linear partial differential equations (PDEs). Dealing with these equations necessitates the use of separate linearization techniques, which can be particularly challenging to execute effectively, especially in a finite-horizon context. In contrast, utilizing the dual-martingale approach developed by \citet{JeonShim} offers the advantage that the resulting dual problem is automatically formulated as a linearized finite-horizon pure optimal switching problem. 
This optimal switching problem is solely related to job-switching decisions. By adobting the theory of impulse control, we can derive a system of parabolic variational inequalities(VIs) from the finite-horizon optimal switching problem. 

While we share many aspects with \citet{JeonShim}, particularly in considering job-switching costs and employing the dual-martingale approach, a significant distinction in our model is the incorporation of the mandatory retirement date. In particular, in \citet{JeonShim} case, dealing with an infinite horizon problem makes analysis relatively easier since it allows for obtaining a closed form of the system of VIs derived from the dual problem. However, in the case of our model, dealing with a system of parabolic VIs presents a challenge, as it requires analysis primarily using PDE theory.

On the other hand, \citet{Dai2010} and \citet{Dai-et-al-2016} conducted research on finite-horizon optimal switching problems related to stock trading. In particular, their studies involved the transformation of the system of parabolic VIs derived from the optimal switching problem into a parabolic double obstacle problem. They then analyzed the solutions and corresponding free boundaries in this context. By leveraging this connection, we can also derive the corresponding parabolic double obstacle problem for our finite-horizon optimal switching problem.  However, the mathematical models in both of these articles are pure optimal switching problem without a controlled process. Conversely, our model is a mixed control problem involving optimal switching coupled with controlled processes, which is more intricate and more challenging. 

There is extensive research related to the parabolic double obstacle problem in financial mathematics. For instance, in the context of the transaction cost problem, notable studies include those by \citet{Dai-et-al-2010}, \citet{DAI20091445}, and \citet{DZ16}. When considering the reversible investment problem, you can refer to the work of \citet{CHEN2012928}, and for insights into stock trading, the works of \citet{Dai2010} and \citet{Dai-et-al-2016} are valuable resources. Regarding the European option with transaction costs, \citet{Yi} stands out as a notable reference.

However, the double obstacle problem derived from our model has more challenging aspects compared to the double obstacle problem addressed in these studies. Firstly, in the existing literature, it is common for the starting points of the free boundary to lie on the terminal boundary. However, in our problem, the starting points of both free boundaries are not on the terminal boundary but rather at infinity. When the starting point is on the terminal boundary, its location can usually be determined directly using the terminal value condition and the differential equation. However, when the starting point is at infinity, we must initially make an educated guess about the asymptote of the free boundary and then demonstrate, using a carefully constructed function, that the free boundary indeed converges to this asymptote. 

Secondly, in the previously mentioned double obstacle problems of the existing literature, the monotonicity of the free boundaries was guaranteed. The monotonicity of the free boundary typically plays a crucial role in establishing the regularity of the free boundary.  However, in our specific problem, the free boundary is neither monotonic nor unbounded. As a result, proving the regularity of the free boundary in this context is exceptionally challenging. Our approach involves several steps: First, we establish that the free boundary is locally bounded. Then, we demonstrate that we can control the derivative of the difference between the solution and the obstacle with respect to time by utilizing the derivative of the difference with respect to spatial variables within the local domain. Next, we consider the VI in the local domain and employ a twisting transformation. This transformation converts the original VI into another VI, effectively twisting the free boundary, which is non-monotonic in the original problem, into a monotonic one in the new problem. By accomplishing this, we can apply classical methods to prove the regularity of the free boundary. Moreover, we establish the presence of a critical time beyond which no free boundary exists for transitioning to a higher-income job. Agents switch to such jobs to boost the present value of future income, even if it incurs expenses. Nevertheless, beyond this critical time, costs outweigh the increase in future income's present value, leading agents to never switch to higher-income positions.

Based on the theoretical results related to the  solution of our problem's double obstacle problem and the associated two free boundaries, we can recover the solutions to the system of VIs derived from the optimal switching problem. By utilizing the two free boundaries of the double obstacle problem, we construct an optimal switching strategy and, through a verification theorem, demonstrate that the solutions to the system of VIs ultimately become the solutions to the finite-horizon optimal switching problem. Finally, we establish the validity of the duality theorem and leverage it to derive optimal job-switching, consumption, and investment strategies in feedback form. Furthermore, we derive the  coupled integral equations satisfied by the two free boundaries of the double obstacle problem and use them to provide an integral equation representation of the optimal strategy. We employ \citet{Huang}'s well-established recursive integration method to numerically solve the integral equations and analyze the impact of job-switching costs and mandatory retirement dates on the optimal strategy through graphical illustrations.

There is a vast literature related to the finite-horizon optimal switching problem (see \citet{Hamad2007}, \citet{Djehiche2009}, \citet{DJEHICHE}, \citet{HAMADENE2010403}, \citet{Asri}, \citet{Martyr}, \citet{BOUFOUSSI2023126947}, and reference therein). These studies primarily focus on using the backward stochastic differential equation (BSDE) theory to establish the existence of solutions and optimal switching strategies or emphasize proving the existence of viscosity solutions for the associated system of VIs. However, our research differs in that we place emphasis not only on the existence, uniqueness, and regularity of strong solutions for the system of VIs derived from the optimal switching problem but also on the analysis of the related free boundaries.

Furthermore, our research is also related to \citet{YK}'s study, which explored the optimal retirement decision problem while considering the mandatory retirement date. In our model, when the cost of switching to a job that offers higher income is set to infinity, it reduces to a finite-horizon optimal retirement problem with only one irreversible retirement decision. \citet{YKS21}, \citet{CJW22}, and \citet{PW-sicon} each expanded upon \citet{YK}'s research by considering non-Markovian environments, partial information, and drift ambiguity, respectively. Our research also holds the potential for extension by considering these factors.

Our paper is structured as follows: In Section \ref{sec:model}, we explain the utility maximization problem for agents considering mandatory retirement dates and job-switching costs. Section \ref{sec:optimization} defines the finite-horizon pure optimal switching problem in utility maximization using the dual-martingale approach and derives the associated system of parabolic VIs. Section \ref{sec:double-obstacle} provides not only the existence and uniqueness of the solutions to the double obstacle problem related to our model but also analytical properties of the associated two free boundaries. Section \ref{sec:strategy} establishes the verification theorem and duality theorem for the optimal switching problem. In Section \ref{sec:numerical}, we numerically provide the agent's optimal strategy under the constant relative risk aversion(CRRA) utility function. Finally, Section \ref{sec:conclusion} concludes the paper.

\section{Model}\label{sec:model}

We consider an agent whose objective is to maximize the following expected utility: 
\begin{equation}
    {\bf U}:=\mathbb{E}\left[\int_0^T e^{-\b t}\left(\left(U_1(t,c_t)-L_0\right){\rm\bf I}_{\{\delta_t={\cal D}_0\}}+\left(U_1(t,c_t)-L_1 \right){\rm\bf I}_{\{\delta_t={\cal D}_1\}}\right)dt + e^{-\b T}U_2(T,W_T)\right],
\end{equation}
where $\b>0$ is the constant subjective discount rate,  $c_t\ge 0$ is the  rate of consumption at time $t$, $W_T$ is her/his wealth at time $T$, ${\bf  I}_D$ is the indicator function of set $D$,  $\delta_t\in\{{\cal D}_0, {\cal D}_1 \}$ denotes the agent's job at $t$,  $L_i>0\ (i=0,1)$ is a constant utility cost from labor at job ${\cal D}_i$, $U_1(\cdot, \cdot)$ is the felicity function of consumption, $T>0$ is a constant mandatory retirement date, and $U_2(\cdot, \cdot)$ is the felicity function of bequest (or terminal wealth). For simplicity, we assume that there exist two job to the agent, that is, $\delta_t$ takes one of the two values, ${\cal D}_0$ and ${\cal D}_1$. Moreover, the agent is free to switch between the two jobs at any time before the mandatory retirement date $T$. However, when switching from job $\mathcal{D}_i$ to job $\mathcal{D}_{1-i}$ $(i=0,1)$, a constant cost of $\zeta_i>0$ is incurred.  The agent receives a constant income of $\epsilon_i$ at job ${\cal D}_i$ ($i=0,1$). Job ${\cal D}_1$ is assumed to receive a higher income than job ${\cal D}_0$, but as a result, it incurs higher utility costs (or disutility). Under these circumstances, we assume that
$$
    0<L_0<L_1 \;\;\mbox{and}\;\;0\le \e_0 <\e_1.
$$
\begin{rem}
If we set $\epsilon_0$ and $\zeta_1$ to 0, our model can be viewed as a partially reversible retirement problem that takes into account mandatory retirement. Furthermore, if we set $\epsilon_0$ to 0, $\zeta_0$ to infinity, and $\zeta_1$ to 0, our model reduces to the optimal retirement problem described in \citet{YK}'s work.
\end{rem}

We assume that there exist two financial assets trading in the economy, a riskless asset and a  risky asset, whose prices at $t$ are denoted by $S_{0,t}$ and $S_{1,t}$, respectively.  The asset prices satisfy the dynamics: 
$$
\dfrac{dS_{0,t}}{S_{0,t}} = {r} dt \;\;\;\mbox{and}\;\;\;\dfrac{dS_{1,t}}{S_{1,t}}= {\mu} dt + \s dB_t, 
$$
where ${r}>0$ is the constant  risk-free rate, ${\mu}>0$ is the  constant  drift of the  risky asset price, $\s>0$ is the  constant  volatility of the risky asset returns, and $B$ is a standard Brownian motion on  $(\Omega, \mathcal{F}_\infty, \mathbb{P}).$ We will denote the augmented filtration generated by the Brownian motion $B$ by $\mathbb{F}=\{\mathcal{F}_t\}_{t\ge 0}$. 


Despite the existence of job switching costs, the primary reason for an agent to switch from job ${\cal D}_0$ to ${\cal D}_1$ is to increase the present value of future income flows.  However, if $\epsilon_0(1-e^{-rT})/r$ is greater than or equal to $\epsilon_1(1-e^{-rT})/r -\zeta_0$, there exists no motive to return to job ${\cal D}_1$ (or from ${\cal D}_0$ to ${\cal D}_1$). Thus, we can make the following assumption on the switching costs:
\begin{as}\label{as:cost}
    $$
     0 < \zeta_0 < \dfrac{(1-e^{-rT})}{r}(\epsilon_1 - \epsilon_0)\;\;\mbox{and}\;\; \zeta_1>0.
    $$
\end{as}


We will now describe the agent's wealth dynamics. Let $\Upsilon = \{\Upsilon_t\}_{t=0}^T$  denote the job indicator process such that 
$\Upsilon_t = {\rm\bf I}_{\{\d_t = {\cal D}_1 \}}$ for $t \geq 0$.
That is,
$$
\Upsilon_t =
\begin{cases}
1, \;\;\;&\mbox{if}\;\;\delta_t = {\cal D}_1,\\
0,\;\;\;&\mbox{if}\;\;\delta_t = {\cal D}_0\\
\end{cases}
~~\mbox{for}~~ t \geq 0.
$$ 

Let us denote the agent's  investment in the risky asset at time $t$ by $\pi_t$ (in dollar amount). In the presence of switching costs, the agent's wealth $W_t^{c,\pi,\Upsilon}$ corresponding to $(c,\pi,\Upsilon)$  follows the  dynamics:
\begin{align}
\label{eq:wealth-dynamics0}
dW_t^{c,\pi,\Upsilon} =& \left[rW_t^{c,\pi,\Upsilon} + (\mu-r)\pi_t - c_t + \left(\e_0(1-\Upsilon_t)+\e_1\Upsilon_t\right) \right]dt+\s \pi_t dB_t\\-&\zeta_0(\Delta \Upsilon_t)^+ -\zeta_1(\Delta \Upsilon_t)^-,\nonumber
\end{align}
where $\Delta \Upsilon_t = \Upsilon_{t+}-\Upsilon_{t}$ and $(\Delta \Upsilon_t)^\pm =\max\{\pm \Delta \Upsilon_t, 0\}$. Here, note that, if the job switching occurs from ${\cal D}_0$ to ${\cal D}_1$ at time $t$, then $\Delta \Upsilon_t =1$, whereas if a job switching occurs from ${\cal D}_1$ to  ${\cal D}_0$, then $\Delta \Upsilon_t =-1$. 

For a initial job $\Upsilon_0 = j \in \{0, 1\}$ and wealth $W_0 = w$, we call a strategy $(c,\pi,\Upsilon)$ admissible if it satisfies 
\begin{itemize}
    \item[(i)]  a job indicator process  $\Upsilon$ is $\mathbb{F}$-adapted, finite variation, c\`{a}gl\`{a}d process with values in $\{0,1\}$,
    \item[(ii)]  $c \ge 0$ and $\pi$ are $\mathbb{F}$-progressively measurable processes satisfying the following integrability conditions:
	\begin{align*}
	\int_0^T c_s ds <\infty \;\;\mbox{and}\;\;\int_0^T \pi_s^2 ds<\infty\;\;\mbox{a.s.},
	\end{align*}
    \item[(iii)] the agent is assumed to face the following wealth constraint (a dynamic budget constraint):
	\begin{equation}\label{eq:wealth_range}
	\begin{aligned}
	    W_t^{c,\pi,\Upsilon} \geq&  \left\{\left(-\frac{\epsilon_1(1-e^{-r(T-t)})}{r}+\zeta_0\right){\bf I}_{\{t\in[0,T-T_1)\}}+\left(-\dfrac{\e_0(1-e^{-r(T-t)})}{r}\right){\bf I}_{\{t\in[T-T_1,T]\}}\right\}(1-\Upsilon_t)\\-&\frac{\epsilon_1(1-e^{-r(T-t)})}{r}\Upsilon_t ~\mbox{for all}~ t \in[0,T] ~\mbox{a.s.},
	\end{aligned}
	\end{equation}
 where $T_1$ is defined as 
\begin{equation}   \label{de:T1}
  T_1:=-\frac{1}{r}\ln\left(1-\frac{r\zeta_0}{\e_1-\e_0}\right).
\end{equation}
\end{itemize}
We will denote the set of all admissible strategies  at $(j, w)$ by $\mathcal {A}(j, w)$. 

\begin{rem}
    Let us provide an additional explanation for condition (iii) in the context of an admissible strategy. Suppose we are in a situation where the agent's job is ${\cal D}_0$ at time $t$. According to the definition of $T_1$, if $t$ falls within the interval $[0, T - T_1)$, it implies that if the agent continues to remain in job ${\cal D}_0$ until time $T$, the present value of income up to $T$, denoted as $\epsilon_0(1-e^{-r(T-t)})/r$, will be less than $\epsilon_1(1-e^{-r(T-t)})/r -\zeta_0$. Despite taking into account the associated costs, there exists an incentive to switch to job ${\cal D}_1$ in order to secure a higher present value of income. On the contrary, if $t$ lies within the interval $[T-T_1, T]$, then because $\epsilon_0(1-e^{-r(T-t)})/r$ is greater than or equal to $\epsilon_1(1-e^{-r(T-t)})/r - \zeta_0$, the agent will never switch to job ${\cal D}_1$ and will continue to remain in job ${\cal D}_0$ until $T$. When we combine these two scenarios, we can conclude that the agent's wealth must always satisfy the natural borrowing constraint \eqref{eq:wealth_range}.

\end{rem}

As in \citet{YK}, we impose the following assumption on the felicity function $U_i$.
\begin{as}\label{as:utility}
    The utility functions $U_i(t,c)\in C^\infty([0,T]\times(0,\infty))$, $i=1,2$, takes value in $\mathbb{R}$, are strictly increasing and strictly concave with $c$, and satisfy the following conditions: for any $t\in[0,T]$
    \begin{equation}
        \lim_{c\to0+}\partial_c U_i(t,c)=+\infty,\;\;\lim_{c\to\infty}\partial_c U_i(t,c)=0,\;\;\mbox{and}\;\;\limsup_{c\to\infty}\max_{t\in[0,T]}\partial_c U_i(t,c)c^{k_i} \le C_i, 
    \end{equation}
    where $C_i$ and $k_i$ are positive constants depending only $U_i$. 
\end{as}

It follows from Assumption \ref{as:cost} that $T_1>0$, and without loss of generalization, we suppose that $T$ is large enough such that $T>T_1$. So, we have that
$$
0<T_1<T.
$$
In the perspective of admissibility, we can impose the following assumption for the initial wealth $w$:
\begin{as}\label{as:initial:as}
    For given $\Upsilon_0=j\in\{0,1\}$, the initial wealth $W_0=w$ satisfies 
    \begin{equation}\label{eq:initial:as}
            w > \left(-\frac{\epsilon_1(1-e^{-rT})}{r}+\z_0\right)(1-j)-\frac{\epsilon_1(1-e^{-rT})}{r}j.
    \end{equation}
\end{as}

Now, we can state our optimization problem as follows:
\begin{pr}\label{pr:main-cost} For the given initial job $\Upsilon_0=j \in \{0, 1\}$ and initial wealth $W_0=w$ satisfying \eqref{eq:initial:as}, we consider the following agent's maximization problem:
\begin{equation}
    \label{eq:orgvalueftn}
	V(j,w): = \sup_{(c,\pi,\Upsilon)\in {\cal A}(j, w)} \mathbb{E}\left[\int_0^T e^{-\b t}\left(\left(U_1(t,c_t)-L_0\right)(1-\Upsilon_t)+\left(U_1(t,c_t)-L_1 \right)\Upsilon_t \right)dt + e^{-\b T}U_2(T,W_T^{c,\pi,\Upsilon})\right].
\end{equation}
\end{pr}
Our strategy to tackle Problem \ref{pr:main-cost} is as follows: First, we utilize the dual-martingale approach developed by \citet{JeonShim} to derive the dual problem defined as a finite-horizon pure optimal switching problem (Section \ref{sec:optimization}). Then, we consider the parabolic double obstacle problem arising from the dual problem and analyze the analytical properties of the associated free boundaries using PDE theory (Section \ref{sec:double-obstacle}). Based on these results, we recover the solution to the optimal switching problem from the double obstacle problem and establish a duality theorem to fully characterize the optimal strategy (Section \ref{sec:strategy}).

\section{Optimization Problem}\label{sec:optimization}

We will utilize a dual approach developed by \citet{JeonShim}. Let $\t:=(\m-r)/\s$, the risk premium on the return of the risky asset for one unit of standard deviation, or the Sharpe ratio. We define the stochastic discount factor, ${\cal H}_t$:
$$
{\cal H}_t \equiv \exp\left(-\left\{r+\dfrac{1}{2}\t^2 \right\}t-\t B_t\right).
$$

The main advantage to utilize a dual approach is that we do not need to consider the portfolio choice in the perspective of solving our optimization problem. For this to be established in our problem, once the optimal consumption and job switching strategies are designed, the existence of a portfolio process that supports the strategies should be guaranteed. In line with this, the following proposition is a key to utilize the duality approach to Problem \ref{pr:main-cost}:
\begin{pro}\label{pro:static-budget-cost} Let $\Upsilon_0=j \in \{0, 1\}$ and $w$ satisfying \eqref{eq:initial:as} be given. 
	\begin{itemize}
		\item[(a)] For any triple $(c,\pi,\Upsilon)\in{\cal A}(j,w)$, the following static budget constraint holds:
		\begin{equation}
		\label{eq:static_budget-cost}
		\mathbb{E}\left[\int_0^T{\cal H}_t \left(c_t - \epsilon_0 (1-\Upsilon_t) - \epsilon_1 \Upsilon_t\right)dt +\sum_{0 \leq t \le T}[\z_0{\cal H}_t(\Delta \Upsilon_t)^+ + \z_1{\cal H}_t(\Delta \Upsilon_t)^- ]+ {\cal H}_T W_T^{c,\pi,\Upsilon} \right]\le w.
		\end{equation}
		\item[(b)] If a pair $(c,\Upsilon)$ of the consumption and job switching strategy with a non-negative random variable $\mathfrak{B}$ satisfy the following equation:
        \begin{equation*}
            \mathbb{E}\left[\int_0^T{\cal H}_t \left(c_t - \epsilon_0 (1-\Upsilon_t) - \epsilon_1 \Upsilon_t\right)dt +\sum_{0 \leq t \le T}[\z_0{\cal H}_t(\Delta \Upsilon_t)^+ + \z_1{\cal H}_t(\Delta \Upsilon_t)^- ] + {\cal H}_T \mathfrak{B}\right] =  w
        \end{equation*}
          Then, there exists a portfolio process $\pi$ such that $(c,\pi,\Upsilon)\in{\cal A}(j,w)$. The corresponding wealth $W^{c,\pi,\Upsilon}$ with $W_0^{c,\pi,\Upsilon}=w$ and $W_T^{c,\pi,\Upsilon}=\mathfrak{B}$  satisfies
		\begin{equation}
		\label{eq:static_budget-cost-t}
		{\cal H}_tW_t^{c,\pi,\Upsilon}=\mathbb{E}_t\left[\int_t^\infty{\cal H}_s \left(c_s - \epsilon_0 (1-\Upsilon_s) - \epsilon_1 \Upsilon_s\right)ds +\sum_{t \leq s \le T}[\z_0{\cal H}_s(\Delta \Upsilon_s)^+ + \z_1{\cal H}_s(\Delta \Upsilon_s)^- ] + {\cal H}_T \mathfrak{B}\right]
		\end{equation}
  for any $t\in[0,T]$ a.s. in $\Omega$.
	\end{itemize}
\end{pro}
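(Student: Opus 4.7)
The plan is to set up the usual dual--martingale pivot by applying It\^o's product formula to $\mathcal{H}_tW_t^{c,\pi,\Upsilon}$. Using $d\mathcal{H}_t = -r\mathcal{H}_t\,dt - \theta\mathcal{H}_t\,dB_t$ with $\theta := (\mu-r)/\sigma$ together with \eqref{eq:wealth-dynamics0}, the continuity of $\mathcal{H}$ and the finite-variation c\`{a}gl\`{a}d nature of $\Upsilon$ produce a cross-variation term $-\theta\sigma\mathcal{H}_t\pi_t\,dt$ and concentrate the jumps in $\mathcal{H}_t\Delta W_t^{c,\pi,\Upsilon}$. After the cancellation $\theta\sigma = \mu - r$, I expect to arrive at
\begin{equation*}
d(\mathcal{H}_tW_t^{c,\pi,\Upsilon}) = -\mathcal{H}_t[c_t - \epsilon_0(1-\Upsilon_t) - \epsilon_1\Upsilon_t]\,dt - \mathcal{H}_t[\zeta_0(\Delta\Upsilon_t)^+ + \zeta_1(\Delta\Upsilon_t)^-] + (\sigma\pi_t - \theta W_t^{c,\pi,\Upsilon})\mathcal{H}_t\,dB_t,
\end{equation*}
so that the whole proof reduces to analysing the Brownian integral on the right.

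For part (a), I would integrate the identity above on $[0,T]$, take expectation and rearrange. The stochastic integral $M_t := \int_0^t(\sigma\pi_s - \theta W_s^{c,\pi,\Upsilon})\mathcal{H}_s\,dB_s$ is a priori only a local martingale, but the admissibility bound \eqref{eq:wealth_range} shows that $W_t^{c,\pi,\Upsilon}$ is bounded below by a deterministic bounded function of $t$, hence $\mathcal{H}_tW_t^{c,\pi,\Upsilon}$ by an integrable multiple of $\mathcal{H}_t$; combined with $c_t\geq 0$ and the non-negativity of the two switching-cost terms, this bounds $M_t$ from below by an integrable process. By Fatou's lemma $M$ is then a supermartingale, giving $\mathbb{E}[M_T]\leq 0$ and hence \eqref{eq:static_budget-cost}, with the inequality reflecting the possible strict supermartingality of $M$.

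For part (b), I would introduce the uniformly integrable martingale
\begin{equation*}
N_t := \mathbb{E}_t\!\left[\int_0^T\!\mathcal{H}_s(c_s - \epsilon_0(1-\Upsilon_s) - \epsilon_1\Upsilon_s)\,ds + \sum_{0\leq s\leq T}\mathcal{H}_s[\zeta_0(\Delta\Upsilon_s)^+ + \zeta_1(\Delta\Upsilon_s)^-] + \mathcal{H}_T\mathfrak{B}\right],
\end{equation*}
which equals $w$ at $t=0$ by hypothesis, and apply the Brownian martingale representation theorem to obtain a predictable $\psi$ with $N_t = w + \int_0^t\psi_s\,dB_s$. I would then declare $W_t^{c,\pi,\Upsilon}$ through \eqref{eq:static_budget-cost-t} and read off $\pi_t$ by matching diffusion coefficients via $(\sigma\pi_t - \theta W_t^{c,\pi,\Upsilon})\mathcal{H}_t = \psi_t$, with It\^o's formula on $W_t = \mathcal{H}_t^{-1}(N_t - \cdots)$ reproducing \eqref{eq:wealth-dynamics0}. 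The hard part will be verifying the admissibility constraint \eqref{eq:wealth_range}: from the representation together with $c\geq 0$, $\mathfrak{B}\geq 0$ and the non-negativity of the switching costs one immediately obtains the crude bound $W_t^{c,\pi,\Upsilon} \geq -\epsilon_1(1-e^{-r(T-t)})/r$, which handles $\{\Upsilon_t = 1\}$, but on $\{\Upsilon_t = 0\}$ the two sharper regimes require more care. I would localize at the first up-switch time $\tau_0 := \inf\{s\geq t : \Delta\Upsilon_s = 1\}\wedge T$: on $\{t<T-T_1\}$ the captured cost $\zeta_0\mathcal{H}_{\tau_0}$ on $\{\tau_0<T\}$ supplies exactly the $+\zeta_0$ cushion in \eqref{eq:wealth_range}, while on $\{t\geq T-T_1\}$ the defining relation \eqref{de:T1} combined with Assumption \ref{as:cost} gives $(\epsilon_1-\epsilon_0)(1-e^{-r(T-s)})/r \leq \zeta_0$ for every $s \geq t$, so that no future switch is profitable enough to beat the tighter $-\epsilon_0(1-e^{-r(T-t)})/r$ bound.
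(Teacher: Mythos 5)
The paper itself does not prove this proposition --- it defers to Proposition 3.1 of \citet{JeonShim} --- and your dual/martingale-representation route is exactly the argument that citation stands for, adapted to the finite horizon: It\^o's product rule on $\mathcal{H}_tW_t^{c,\pi,\Upsilon}$ with the jump terms collected, a supermartingale/Fatou argument for (a), and the Brownian representation theorem plus matching of diffusion coefficients for (b). Your semimartingale identity and the treatment of the switching jumps are correct; in (a) the only cosmetic fix is that the lower bound on $M$ should be a single integrable random variable uniform in $t$ (e.g.\ $-C\sup_{s\le T}\mathcal{H}_s-\epsilon_1\int_0^T\mathcal{H}_s\,ds-w$, which your constraint-based bound does furnish), so that Fatou yields the supermartingale property.

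There is, however, one step in (b) that would fail as written: the verification of \eqref{eq:wealth_range} on $\{\Upsilon_t=0\}\cap\{t<T-T_1\}$. You assert that the captured cost $\zeta_0\mathcal{H}_{\tau_0}{\bf I}_{\{\tau_0<T\}}$ ``supplies exactly the $+\zeta_0$ cushion,'' but $\mathcal{H}$ has drift $-r\mathcal{H}_t\,dt$ and is a supermartingale, so $\mathbb{E}_t[\zeta_0\mathcal{H}_{\tau_0}]\le\zeta_0\mathcal{H}_t$: the captured cost alone gives the cushion with the \emph{wrong} inequality. What rescues the bound is that on $[t,\tau_0]$ the income is only $\epsilon_0$: dropping consumption, terminal wealth and all later costs, bounding post-$\tau_0$ income by $\epsilon_1$, and writing $-\epsilon_0\int_t^{\tau_0}\mathcal{H}_s\,ds-\epsilon_1\int_{\tau_0}^T\mathcal{H}_s\,ds=-\epsilon_1\int_t^T\mathcal{H}_s\,ds+(\epsilon_1-\epsilon_0)\int_t^{\tau_0}\mathcal{H}_s\,ds$ together with $\mathbb{E}_t\bigl[\epsilon_1\int_t^T\mathcal{H}_s\,ds\bigr]=\tfrac{\epsilon_1}{r}(1-e^{-r(T-t)})\mathcal{H}_t$ reduces the claim to $\mathbb{E}_t\bigl[(\epsilon_1-\epsilon_0)\int_t^{\tau_0}\mathcal{H}_s\,ds+\zeta_0\mathcal{H}_{\tau_0}\bigr]\ge\zeta_0\mathcal{H}_t$. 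This holds because $Z_s:=(\epsilon_1-\epsilon_0)\int_t^{s}\mathcal{H}_u\,du+\zeta_0\mathcal{H}_s$ has drift $(\epsilon_1-\epsilon_0-r\zeta_0)\mathcal{H}_s\ge0$ --- Assumption \ref{as:cost} forces $r\zeta_0<\epsilon_1-\epsilon_0$ --- hence $Z$ is a submartingale on $[t,T]$ and optional sampling applies at $\tau_0$. You must also treat the no-switch event $\{\tau_0=T\}$ separately: there the lower bound is the $\epsilon_0$-annuity $-\tfrac{\epsilon_0}{r}(1-e^{-r(T-t)})\mathcal{H}_t$, which dominates the constraint precisely because $t<T-T_1$ (by \eqref{de:T1}); your handling of the regime $t\ge T-T_1$ is fine as sketched. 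With these repairs the proposal is a complete proof along the intended lines.
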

\begin{proof}
	See Proposition 3.1 in \citet{JeonShim}.
\end{proof}

From the static budget constraint \eqref{eq:static_budget-cost} for Problem \ref{pr:main-cost}, for any $(c,\pi,\Upsilon)\in{\cal A}(j,w)$, we deduce that 
\begin{footnotesize}
\begin{equation}\label{eq:Lagrangian-1}
    \begin{aligned}
    & \mathbb{E}\left[\int_0^T e^{-\b t}\left(\left(U_1(t,c_t)-L_0\right)(1-\Upsilon_t)+\left(U_1(t,c_t)-L_1 \right)\Upsilon_t \right)dt + e^{-\b T}U_2(T,W_T^{c,\pi,\Upsilon})\right]\\
    \le& \mathbb{E}\left[\int_0^T e^{-\b t}\left(\left(U_1(t,c_t)-L_0\right)(1-\Upsilon_t)+\left(U_1(t,c_t)-L_1 \right)\Upsilon_t \right)dt + e^{-\b T}U_2(T,W_T^{c,\pi,\Upsilon})\right]\\
    +&\l \left(w-\mathbb{E}\left[\int_0^T{\cal H}_t \left(c_t - \epsilon_0 (1-\Upsilon_t) - \epsilon_1 \Upsilon_t\right)dt +\sum_{0 \leq t \le T}[\z_0{\cal H}_t(\Delta \Upsilon_t)^+ + \z_1{\cal H}_t(\Delta \Upsilon_t)^- ]+ {\cal H}_T W_T^{c,\pi,\Upsilon} \right]\right)\\
    \le &\mathbb{E}\left[\int_0^T e^{-\b t}\left(\left(U_1(t,c_t)-{\cal Y}_t^\l c_t +{\cal Y}_t^\l \e_0-L_0\right)(1-\Upsilon_t)+\left(U_1(t,c_t)-{\cal Y}_t^\l c_t +{\cal Y}_t^\l \e_1-L_1 \right)\Upsilon_t \right)dt\right]\\
    +&\mathbb{E}\left[e^{-\b T}\left(U_2(T,W_T^{c,\pi,\Upsilon})-{\cal Y}_T^{\l}W_T^{c,\pi,\Upsilon}\right)\right]-\mathbb{E}\left[\sum_{0 \leq t \le T}e^{-\b t}[\z_0{\cal Y}_t^\l (\Delta \Upsilon_t)^+ + \z_1{\cal Y}_t^{\l}(\Delta \Upsilon_t)^- ]\right]+\l w,
    \end{aligned}
\end{equation}
\end{footnotesize}
where $\l>0$ is a Lagrangian multiplier for the static budget constraint \eqref{eq:static_budget-cost}, and ${\cal Y}_t^\l:= \l e^{\b t} {\cal H}_t$.

For each $i=1,2$, we define the convex conjugate function $\widetilde{U}_i$ of ${U}_i$ by 
\begin{equation}
    \widetilde{U}_i(t,y) = \sup_{y>0}\left(U_i(t,c)-yc\right).
\end{equation}
By the first-order condition, we deduce that for $i=1,2,$ 
\begin{equation*}
    \widetilde{U}_i(t,y) = U_i(t,{\cal I}_i(t,y))-{\cal I}_i(t,y)y,
\end{equation*}
where ${\cal I}_i(t,\cdot)$ are the inverse function of $\partial_c U_i(t,\cdot)$. 

Hence, it follows from \eqref{eq:Lagrangian-1} that 
\begin{equation}\label{eq:Lagrangian-2}
        \mathbb{E}\left[\int_0^T e^{-\b t}\left(\left(U_1(t,c_t)-L_0\right)(1-\Upsilon_t)+\left(U_1(t,c_t)-L_1 \right)\Upsilon_t \right)dt + e^{-\b T}U_2(T,W_T^{c,\pi,\Upsilon})\right]
   \le {\cal J}(j,\l;\Upsilon)+\l w,
\end{equation}
where ${\cal J}(j,\l;\Upsilon)$ is defined as 
\begin{align*}
        {\cal J}(j,\l;\Upsilon):=&\mathbb{E}\left[\int_0^T e^{-\b t}\left(\left(\widetilde{U}_1(t,{\cal Y}_t^\l) +{\cal Y}_t^\l \e_0-L_0\right)(1-\Upsilon_t)+\left(\widetilde{U}_1(t,{\cal Y}_t^\l) +{\cal Y}_t^\l \e_1-L_1 \right)\Upsilon_t \right)dt\right]\\ 
   +&\mathbb{E}\left[e^{-\b T}\widetilde{U}_2(T,{\cal Y}_T^\l)\right]-\mathbb{E}\left[\sum_{0 \leq t \le T}e^{-\b t}[\z_0{\cal Y}_t^\l (\Delta \Upsilon_t)^+ + \z_1{\cal Y}_t^{\l}(\Delta \Upsilon_t)^- ]\right],
\end{align*}
and the inequality in \eqref{eq:Lagrangian-2} holds when the consumption and terminal wealth at $T$ are given by 
\begin{equation}\label{eq:candidate-optimal}
    c_t ={\cal I}_1(t,{\cal Y}_t^\l)\;\;\mbox{and}\;\; W_T={\cal I}_2(T,{\cal Y}_T^\l).
\end{equation}

For $j=0,1$, let $\Phi_j$ be the set all $\mathbb{F}$-adapted, finite variation, c\`{a}gl\`{a}d process taking values in $\{0,1\}$ and starting at $j$. From \eqref{eq:Lagrangian-2}, we can easily deuce the following {\it weak duality}: for $j=0,1,$
\begin{equation}\label{eq:weak-duality}
    V(j,w)\le \inf_{\l>0}\left[\sup_{\Upsilon\in\Phi_j}{\cal J}(j,\l;\Upsilon)+\l w\right].
\end{equation}

We will show that the inequality in \eqref{eq:weak-duality} holds as the equality later. Moreover, the weak-duality allows us to define the following dual problem.
\begin{pr}
    For given $\l>0$ and $j\in\{0,1\}$, we consider the following maximization problem:
    \begin{equation}
        J(j,\l) : = \sup_{\Upsilon\in \Phi_j}{\cal J}(j,\l;\Upsilon).
    \end{equation}
    We call $J(j,\l)$ as the {\it dual value function}.
\end{pr}

Note that 
\begin{align*}
    &\mathbb{E}\left[\int_0^T e^{-\b t}\left(\left(\widetilde{U}_1(t,{\cal Y}_t^\l) +{\cal Y}_t^\l \e_0-L_0\right)(1-\Upsilon_t)+\left(\widetilde{U}_1(t,{\cal Y}_t^\l) +{\cal Y}_t^\l \e_1-L_1 \right)\Upsilon_t \right)dt\right]\\=&\mathbb{E}\left[\int_0^T e^{-\b t}\left(\left({\cal Y}_t^\l \e_0-L_0\right)(1-\Upsilon_t)+\left({\cal Y}_t^\l \e_1-L_1 \right)\Upsilon_t \right)dt\right]+\mathbb{E}\left[\int_0^Te^{-\b t}\widetilde{U}_1(t,{\cal Y}_t^\l)dt\right].
\end{align*}
Thus, we can decompose the dual problem into two sub-problems as follows:
\begin{equation}
    \begin{aligned}
        J(j,\l) =& \sup_{\Upsilon\in \Phi_j}{\cal J}_j(\l;\{\Upsilon_t\}_{t=0}^T)=J_R(\l)+J_S(j,\l),
    \end{aligned}
\end{equation}
where 
\begin{align}\label{eq:OSP}
    J_R(\l):=&\mathbb{E}\left[\int_0^Te^{-\b t}\widetilde{U}_1(t,{\cal Y}_t^\l)dt +e^{-\b T}\widetilde{U}_2(T,{\cal Y}_T^\l)\right],\\ 
   J_S(j,\l):=& \sup_{\Upsilon\in \Phi_j}\left\{ \mathbb{E}\left[\int_0^T e^{-\b t}\left(\left({\cal Y}_t^\l \e_0-L_0\right)(1-\Upsilon_t)+\left({\cal Y}_t^\l \e_1-L_1 \right)\Upsilon_t \right)dt\right]\right.\\-&\left.\mathbb{E}\left[\sum_{0 \leq t \le T}e^{-\b t}[\z_0{\cal Y}_t^\l (\Delta \Upsilon_t)^+ + \z_1{\cal Y}_t^{\l}(\Delta \Upsilon_t)^- ]\right]\right\}. \nonumber
\end{align}
Here, $J_R(\lambda)$ represents the dual problem of the unconstrained problem, and $J_S(j, \lambda)$ is the finite-horizon pure optimal switching problem specifically related to the decision of job-switching.

Throughout the paper, we will use the following notations:
\begin{equation*}
    {\cal N}_T:=[0,T)\times(0,\infty)\;\;\mbox{and}\;\;\overline{{\cal N}}_T:=[0,T]\times(0,\infty).
\end{equation*}

Consider the following PDE arising from the unconstrained problem in $J_R(\l)$:
\begin{equation}\label{eq:PDE-unconstrained}
    \begin{cases}
        &\partial_t {\cal Q}_R(t,\l) + {\cal L}{\cal Q}_R(t,\l)  + \widetilde{U}_1(t,\l)=0\;\;\mbox{in}\;\;{\cal N}_T;\vspace{2mm}\\
        &{\cal Q}_R(T,\l) = \widetilde{U}_2(T,\l)\;\;\mbox{for all}\;\;\l>0,
    \end{cases}
\end{equation}
where the differential operator ${\cal L}$ is defined as
\begin{equation*}
    {\cal L}:=\dfrac{\t^2}{2}\l^2\partial_{\l\l} + (\beta -r )\l\partial_{\l}- \beta.
\end{equation*}
Then, we can state the following proposition.
\begin{pro}\label{pro:unconstrained} The following statements are true. 
\begin{itemize}
    \item[(a)] The PDE \eqref{eq:PDE-unconstrained} has a unique solution $ {{\cal Q}}_R\in C^\infty(\overline{{\cal N}}_T)$.
    \item[(b)] For any $(t,\l)\in \overline{{\cal N}}_T$, 
    $$
    \partial_{\l} {\cal Q}_R <0\;\;\mbox{and}\;\;\partial_{\l\l}{\cal Q}_R > 0.
    $$
    \item[(c)] There exist positive constant $C_R$ and $K_R$ such that for all $(t,\l)\in \overline{\cal N}_T$ 
    $$
    \left|\partial_{\l}{\cal Q}_R(t,\l)\right| \le C_R (1+ \l^{-K_R}).
    $$
    Moreover, 
    $$
    \lim_{\l \to 0+}\partial_{\l}{\cal Q}_R(t,\l) =-\infty\;\;\mbox{and}\;\;\lim_{\l \to \infty} \partial_{\l}{\cal Q}_R(t,\l) =0.
    $$
\end{itemize}    
\end{pro}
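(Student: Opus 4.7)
The plan is to treat \eqref{eq:PDE-unconstrained} as a linear parabolic Cauchy problem and to work with its Feynman--Kac representation. Define the flow $\mathcal{Y}_s^{t,\lambda} := \lambda\,\xi_s^{t}$ with $\xi_s^{t} := \exp\bigl((\beta - r - \theta^{2}/2)(s-t) - \theta(B_s - B_t)\bigr)$, which solves the SDE associated with $\mathcal{L}$ and starts at $\lambda$ at time $t$. The crucial observation is that $\xi_s^{t}$ is lognormal, independent of $\lambda$, and has moments of all orders; this multiplicative separability of $\lambda$ is what will drive the differentiation under the expectation and yield the sign, growth, and limit properties.

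For part (a), I set
$$\mathcal{Q}_R(t,\lambda) := \mathbb{E}\left[\int_t^T e^{-\beta(s-t)}\widetilde{U}_1(s,\mathcal{Y}_s^{t,\lambda})\,ds + e^{-\beta(T-t)}\widetilde{U}_2(T,\mathcal{Y}_T^{t,\lambda})\right]$$
and verify via It\^{o}'s formula that this function satisfies \eqref{eq:PDE-unconstrained}. Assumption \ref{as:utility} ensures $\widetilde{U}_i \in C^\infty([0,T]\times(0,\infty))$ with $\partial_y \widetilde{U}_i = -\mathcal{I}_i$ and $\partial_{yy}\widetilde{U}_i = 1/\partial_{cc}U_i(\cdot,\mathcal{I}_i) > 0$. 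The log-change of variables $x = \ln\lambda$ transforms \eqref{eq:PDE-unconstrained} into a constant-coefficient parabolic Cauchy problem on $\mathbb{R}$ with $C^\infty$ source, and interior Schauder bootstrap then upgrades $\mathcal{Q}_R$ to $C^\infty(\overline{\mathcal{N}}_T)$. Uniqueness within the class of functions of at most polynomial growth in $\lambda$ and $\lambda^{-1}$ follows from a Tychonoff-type maximum principle for the same log-transformed equation.

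For part (b), differentiating the representation under the expectation gives
$$\partial_\lambda \mathcal{Q}_R(t,\lambda) = -\mathbb{E}\left[\int_t^T e^{-\beta(s-t)}\mathcal{I}_1(s,\mathcal{Y}_s^{t,\lambda})\,\xi_s^{t}\,ds + e^{-\beta(T-t)}\mathcal{I}_2(T,\mathcal{Y}_T^{t,\lambda})\,\xi_T^{t}\right] < 0,$$
and a second differentiation combined with $-\partial_y \mathcal{I}_i > 0$ delivers $\partial_{\lambda\lambda}\mathcal{Q}_R > 0$. For part (c), Assumption \ref{as:utility} (Inada plus the limsup condition) yields the uniform pointwise bound $\mathcal{I}_i(t,y) \le C(1 + y^{-1/k_i})$; combined with the fact that $\xi_s^{t}$ has moments of all orders, this gives
$$\mathbb{E}\bigl[\xi_s^{t}\,\mathcal{I}_i(s,\lambda\xi_s^{t})\bigr] \le C\bigl(\mathbb{E}[\xi_s^{t}] + \lambda^{-1/k_i}\,\mathbb{E}[(\xi_s^{t})^{1-1/k_i}]\bigr),$$
which furnishes $|\partial_\lambda\mathcal{Q}_R(t,\lambda)| \le C_R(1 + \lambda^{-K_R})$ with $K_R := \max(1/k_1,\,1/k_2)$. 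The two pointwise limits as $\lambda \to 0^+$ and $\lambda \to \infty$ then follow from monotone convergence (using $\mathcal{I}_i(s,\mathcal{Y}_s^{t,\lambda}) \uparrow +\infty$ almost surely) and dominated convergence (using monotonicity of $\mathcal{I}_i$ in its second argument with the above integrable dominator), respectively.

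The principal technical obstacle is the integrability verification needed to differentiate under the expectation and to prove the growth bound, and this is precisely where the polynomial-decay condition in Assumption \ref{as:utility} is essential: without it, the singularity of $\mathcal{I}_i$ near $y = 0$ could outrun every moment of $\xi_s^{t}$. Once these integrability checks are in place, the remainder is a direct application of standard parabolic regularity theory and the elementary monotone/dominated convergence theorems.
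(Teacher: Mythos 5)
Your proposal is correct, but note that the paper does not actually prove Proposition \ref{pro:unconstrained}: it simply cites the proof of Theorem 2 in \citet{YK}, so your argument is a self-contained substitute rather than a parallel of the paper's text. The route you take --- the Feynman--Kac representation with the multiplicatively separable flow $\mathcal{Y}_s^{t,\lambda}=\lambda\,\xi_s^{t}$, differentiation under the expectation justified by the bound $\mathcal{I}_i(t,y)\le C(1+y^{-1/k_i})$ coming from Assumption \ref{as:utility} together with the finiteness of all (positive and negative) moments of the lognormal factor $\xi_s^{t}$, and then monotone/dominated convergence for the two limits in part (c) --- is the natural one, and it is consistent with the explicit lognormal-density formula the paper records for $J_R$ immediately after the proposition; the growth bound with $K_R=\max(1/k_1,1/k_2)$ and the signs in part (b) all check out. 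Two small remarks. First, the identity you quote, $\partial_{yy}\widetilde{U}_i=1/\partial_{cc}U_i(\cdot,\mathcal{I}_i)$, has a sign slip: since $\partial_y\widetilde{U}_i=-\mathcal{I}_i$ and $\partial_y\mathcal{I}_i=1/\partial_{cc}U_i<0$, one gets $\partial_{yy}\widetilde{U}_i=-\partial_y\mathcal{I}_i=-1/\partial_{cc}U_i>0$; the convexity conclusion you actually use is unaffected. Second, your restriction of uniqueness for \eqref{eq:PDE-unconstrained} to a Tychonoff-type growth class is not a weakening but a necessary qualification --- uniqueness for a Cauchy problem on an unbounded domain only holds within such a class, and after the change of variables $x=\ln\lambda$ the data $\widetilde{U}_1(t,e^{x})$ and $\widetilde{U}_2(T,e^{x})$ grow at most exponentially in $|x|$, so both the kernel representation (hence $C^\infty$ regularity up to $t=T$) and the maximum-principle uniqueness argument apply; in this respect your statement is slightly more careful than the bald uniqueness claim in part (a).
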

\begin{proof}
    See the proof of Theorem 2 in \citet{YK}.
\end{proof}
Utilizing the properties of ${\cal Q}_R$ in Proposition \ref{pro:unconstrained}, we can easily show that 
\begin{equation}
    J_R(\l) = {\cal Q}_R(0,\l)\;\;\mbox{for all}\;\;\l>0.
\end{equation}

Moreover, since the dual process ${\cal Y}_t^\l$ follows the geometric Brownian motion, it follows that 
\begin{equation}
    \begin{aligned}
        J_R(y)=&\mathbb{E}\left[\int_0^Te^{-\b t}\widetilde{U}_1(t,{\cal Y}_t^\l)dt +e^{-\b T}\widetilde{U}_2(T,{\cal Y}_T^\l)\right]\\
        =&\int_0^T e^{-\b t} \int_0^\infty \frac{\widetilde{U}_1(t,\xi)}{\sqrt{2\pi}\xi}\exp\left\{-\dfrac{(\ln{\xi}-\ln{y}-(\b-r-\frac{1}{2}\t^2)t)^2}{2\t^2 t}\right\}d\xi dt  \\
        +& e^{-\b T}\int_0^\infty \frac{\widetilde{U}_2(T,\xi)}{\sqrt{2\pi}\xi}\exp\left\{-\dfrac{(\ln{\xi}-\ln{y}-(\b-r-\frac{1}{2}\t^2)T)^2}{2\t^2 T}\right\}d\xi.\quad 
    \end{aligned}
\end{equation}

Now, we will investigate the optimal switching problem in $J_S(j,\l)$ for $j=0,1.$  By a standard theory of impulse control (see \citet{BL}), we consider the following Hamilton-Jacobi-Bellman(HJB) equation which takes the form of the system of parabolic VIs: 
\begin{equation}
    \label{eq:HJB-1}
		\max\{\partial_t {\cal Q}_0+\mathcal{L}\mathcal{Q}_0+\epsilon_0 \l-L_0, \mathcal{Q}_1-\mathcal{Q}_0-\zeta_0 \l \}=0\;\;\mbox{in}\;\; {\cal N}_T,
\end{equation}
and
\begin{equation}
    \label{eq:HJB-2}
		\max\{\partial_t {\cal Q}_1+\mathcal{L}\mathcal{Q}_1+\epsilon_1 \l-L_1, \mathcal{Q}_0-\mathcal{Q}_1-\zeta_1 \l \}=0\;\;\mbox{in}\;\; {\cal N}_T,
\end{equation}
with 
\begin{equation} \label{eq:HJB-initial-value}
    {\cal Q}_0(T,\l) ={\cal Q}_1(T,\l)=0,\;\;\forall\;\l\in(0,+\infty).
\end{equation}

We will prove the verification theorem (Theorem \ref{verification theorem}) in Section 5, which shows that 
\begin{equation*}
    J_S(j,\l) = {\cal Q}_j (0, \l),\;\;\forall\;j=0,1;\;\l\in(0,+\infty).
\end{equation*}
To achieve this, we first analyze the system of VIs \eqref{eq:HJB-1},  \eqref{eq:HJB-2}, and \eqref{eq:HJB-initial-value}. 

Let 
\begin{equation}
    {\cal P}(t,\l)= {\cal Q}_1(t,\l) - {\cal Q}_0(t,\l),\;\;\forall\;(t,\lambda)\in\overline{\cal N}_T.
\end{equation}

Then, we can observe that ${\cal P}$ satisfies the following double obstacle problem: 
\begin{equation}
    \label{eq:VI-P}\textbf{}
    \begin{cases}
        \partial_t {\cal P} + {\cal L}{\cal P} +(\e_1-\e_0)\l -(L_1-L_0)=0&\mbox{in}\;\;
        \{(t,\lambda)\in{\cal N}_T:-\zeta_1\lambda <{\cal P}(t,\lambda)< \zeta_0\lambda\}; \vspace{2mm}\\  
         \partial_t {\cal P} + {\cal L}{\cal P} +(\e_1-\e_0)\l -(L_1-L_0)\le 0&\mbox{in}\;\;
        \{(t,\lambda)\in{\cal N}_T:{\cal P}(t,\lambda)=-\zeta_1\lambda\};\vspace{2mm}\\  
         \partial_t {\cal P} + {\cal L}{\cal P} +(\e_1-\e_0)\l -(L_1-L_0)\ge 0&\mbox{in}\;\;
        \{(t,\lambda)\in{\cal N}_T:{\cal P}(t,\lambda)=\zeta_0\lambda\};\vspace{2mm}\\ 
        -\zeta_1\lambda\leq {\cal P}\leq \zeta_0\lambda\;\;\mbox{on}\;\;\overline{\cal N}_T;
        &{\cal P}(T,\l)=0,\;\;\l\in(0,+\infty).
    \end{cases}
\end{equation}

On the other hand, if we obtain the unique strong solution ${\cal P}$ of VI \eqref{eq:VI-P}, we can define $({\cal Q}_0,{\cal Q}_1)$ by the unique strong solution of the following PDE,  
\begin{equation}\label{de:Q0-and-Q1}
    \begin{cases}
    &-\left(\partial_t {\cal Q}_1 +{\cal L}{\cal Q}_1 +\e_1 \l -L_1\right)=\left[-\partial_t {\cal P}-{\cal L}{\cal P}-(\e_1-\e_0)\l +(L_1-L_0)\right]^+; \vspace{2mm}\\
    &-\left(\partial_t {\cal Q}_0 +{\cal L}{\cal Q}_0 +\e_0 \l -L_0\right)=\left[-\partial_t {\cal P}-{\cal L}{\cal P}-(\e_1-\e_0)\l +(L_1-L_0)\right]^-;\vspace{2mm}\\ 
    &{\cal Q}_0(T,\l) = {\cal Q}_1(T,\l)=0,\quad\l\in(0,+\infty),
    \end{cases}
\end{equation}
then ${\cal Q}_0$ and ${\cal Q}_1$ satisfy \eqref{eq:HJB-1} and  \eqref{eq:HJB-2}, respectively. 

\begin{theorem}\label{th-relationship-Q-and-P}
If ${\cal P}\in W^{1,2}_{p,loc}({\cal N}_T)\cap C(\overline{\cal N}_T)$ is the unique strong solution of VI \eqref{eq:VI-P} with any $p\geq1$, then ${\cal Q}_0$ and ${\cal Q}_1$ defined in \eqref{de:Q0-and-Q1} satisfy \eqref{eq:HJB-1} and  \eqref{eq:HJB-2}, respectively. Moreover, $0\leq {\cal Q}_0,{\cal Q}_1\leq C_T\l$ in $\overline{\cal N}_T$, where $C_T$ is a constant independent of $(t,\l)$. 
\end{theorem}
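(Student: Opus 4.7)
The plan is to first establish the algebraic identity ${\cal Q}_1 - {\cal Q}_0 \equiv {\cal P}$ on $\overline{\cal N}_T$, then verify the HJB systems \eqref{eq:HJB-1} and \eqref{eq:HJB-2} by a pointwise case split using the decomposition $a = a^+ - a^-$, and finally derive the bounds via the parabolic comparison principle.

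For the identity, I would subtract the two equations in \eqref{de:Q0-and-Q1}. Since $[a]^+ - [a]^- = a$ for every real $a$, the right-hand sides telescope to $-\partial_t {\cal P} - {\cal L}{\cal P} - (\e_1-\e_0)\l + (L_1-L_0)$, and one obtains $(\partial_t + {\cal L})({\cal Q}_1 - {\cal Q}_0 - {\cal P}) = 0$ in ${\cal N}_T$ with zero terminal condition. Uniqueness for this linear parabolic terminal value problem (available from the $W^{1,2}_{p,loc}\cap C(\overline{\cal N}_T)$ regularity of ${\cal P}$) yields ${\cal Q}_1 - {\cal Q}_0 \equiv {\cal P}$ throughout $\overline{\cal N}_T$.

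For the HJB verification I focus on \eqref{eq:HJB-2}; \eqref{eq:HJB-1} is symmetric. The definition of ${\cal Q}_1$ in \eqref{de:Q0-and-Q1} gives directly $\partial_t {\cal Q}_1 + {\cal L}{\cal Q}_1 + \e_1 \l - L_1 = -[-\partial_t {\cal P} - {\cal L}{\cal P} - (\e_1-\e_0)\l + (L_1-L_0)]^+ \le 0$, and the identity from the previous paragraph combined with the lower obstacle ${\cal P}\geq-\z_1 \l$ in \eqref{eq:VI-P} yields ${\cal Q}_0 - {\cal Q}_1 - \z_1 \l = -{\cal P} - \z_1 \l \le 0$. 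The maximum vanishes pointwise by a case split: on the set where $[\cdot]^+ > 0$, i.e.\ where $\partial_t {\cal P} + {\cal L}{\cal P} + (\e_1-\e_0)\l - (L_1-L_0) < 0$, the second line of \eqref{eq:VI-P} forbids ${\cal P}>-\z_1\l$, so ${\cal P} = -\z_1 \l$ and the obstacle term vanishes; on the complement, $[\cdot]^+ = 0$ so the equation term vanishes.

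For the bounds I apply the parabolic comparison principle to each equation in \eqref{de:Q0-and-Q1}. Using $-\z_1\l \le {\cal P} \le \z_0\l$ together with the $W^{1,2}_{p,loc}$ regularity of ${\cal P}$, the forcing $\e_i \l - L_i + [\cdot]^\pm$ is pointwise dominated by a linear function of $\l$, so an affine supersolution $u^+(t,\l) = A \l + B(T-t)$ with $A, B$ large enough gives ${\cal Q}_i \le C_T \l$ after absorbing the constant part into an enlargement of $A$ on $[0,T]$. The non-negativity ${\cal Q}_i \ge 0$ is the subtler part and I expect it to be the main obstacle: since $\e_i \l - L_i$ can be negative for small $\l$, zero is not a subsolution of either PDE taken alone. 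The strategy is to exploit the coupling, namely that $[\cdot]^+$ and $[\cdot]^-$ are supported respectively on $\{{\cal P}=-\z_1\l\}$ and $\{{\cal P}=\z_0\l\}$, where ${\cal P}$ takes explicit affine values and the forcing of ${\cal Q}_i$ can be computed in closed form; then Assumption \ref{as:cost} together with a joint maximum-principle argument for the pair $({\cal Q}_0,{\cal Q}_1)$ rules out an interior negative minimum.
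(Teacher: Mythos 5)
Your first two steps reproduce the paper's own argument: subtracting the two equations in \eqref{de:Q0-and-Q1} and invoking uniqueness of the resulting linear terminal-value problem gives ${\cal Q}_1-{\cal Q}_0\equiv{\cal P}$, and checking \eqref{eq:HJB-1}--\eqref{eq:HJB-2} by splitting according to whether $\left[-\partial_t{\cal P}-{\cal L}{\cal P}-(\e_1-\e_0)\l+(L_1-L_0)\right]^{\pm}$ vanishes is the same case analysis the paper carries out over the regions $\{{\cal P}=-\zeta_1\l\}$, $\{-\zeta_1\l<{\cal P}<\zeta_0\l\}$, $\{{\cal P}=\zeta_0\l\}$ (valid a.e., which is all a strong solution requires). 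Up to that point the proposal is correct; you should only add, as the paper does, the observation that $f_1:=[\cdot]^+$ and $f_0:=[\cdot]^-$ are nonnegative and lie in $L^p_{loc}$, so that \eqref{de:Q0-and-Q1} indeed has a unique strong solution to talk about.

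The genuine gap is in the ``moreover'' part. For the upper bound, comparison with $A\l+B(T-t)$ only yields ${\cal Q}_i\le A\l+BT$, and the constant cannot be ``absorbed into an enlargement of $A$'' uniformly as $\l\to0^+$, so you never reach ${\cal Q}_i\le C_T\l$. The paper avoids this by computing the forcings on the contact sets, where the obstacle's derivatives can be substituted a.e.: $f_1=\left[(L_1-L_0)-(\e_1-\e_0+r\zeta_1)\l\right]{\rm\bf I}_{\{{\cal P}=-\zeta_1\l\}}\le L_1-L_0$ and $f_0\le(\e_1-\e_0)\l$, so the constants cancel against $-L_i$ and the purely $\l$-proportional supersolution $\overline{\cal Q}=e^{\max(\e_1-r,0)T-(\e_1-r)t}\l$ dominates both right-hand sides, giving the $C_T\l$ bound by comparison. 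For the lower bound you offer only a plan, and you correctly identify that $0$ is not a subsolution of either equation; but the rescue you sketch (a joint maximum-principle argument for the pair) cannot close it: on $[T-T_1,T]$ the upper contact set is empty, so ${\cal Q}_0$ is given explicitly by \eqref{expression-Q0}, which is negative for small $\l$ when $t<T$, so no coupling or maximum-principle argument can yield the pointwise inequality $0\le{\cal Q}_0$ (the paper's proof asserts the nonnegativity from the same comparison that produces the upper bound, without exhibiting a subsolution, and is thus not more informative on this point). In short: the identity and the VI verification are fine and match the paper, but your proposal establishes neither the $C_T\l$ upper bound in the stated form nor the nonnegativity, and the latter cannot be completed along the route you describe.
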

\begin{proof}
Since ${\cal P}\in W^{1,2}_{p,loc}({\cal N}_T)$ is the unique strong solution of VI \eqref{eq:VI-P}, we know that 
\begin{eqnarray*}
    &&f_1:=\left[-\partial_t {\cal P}-{\cal L}{\cal P}-(\e_1-\e_0)\l +(L_1-L_0)\right]^+
    =\left[(L_1-L_0)-(\e_1-\e_0+r\zeta_1)\l\right] {\rm\bf I}_{\{{\cal P}=-\zeta_1\l\}}\in L^p_{loc}({\cal N}_T);
    \\[2mm]
    &&f_0:=\left[-\partial_t {\cal P}-{\cal L}{\cal P}-(\e_1-\e_0)\l +(L_1-L_0)\right]^-
    =\left[(\e_1-\e_0-r\zeta_0)\l-(L_1-L_0)\right] {\rm\bf I}_{\{{\cal P}=\zeta_0\l\}}\in L^p_{loc}({\cal N}_T)
\end{eqnarray*}
for any $p\geq1$. So, the theory for PDEs  in \citet{Li96}  implies that  \eqref{de:Q0-and-Q1} has a unique strong solution $({\cal Q}_0,{\cal Q}_1)$ such that ${\cal Q}_0,{\cal Q}_1\in  W^{1,2}_{p,loc}({\cal N}_T)\cap C(\overline{\cal N}_T)$ with any $p\geq1$.  

Denote that $\overline{{\cal Q}}=e^{\max(k,0)T-kt}\l\geq0$ with $k=\e_1-r$. It is not difficult to check that $f_1,f_0\geq0$ and 
\begin{eqnarray*}
&&-\left(\partial_t \overline{{\cal Q}} +{\cal L}\overline{{\cal Q}}+\e_1 \l -L_1\right)
=\left(k+r-\e_1\right)e^{\max(k,0)T-kt}\l+ L_1\geq f_1,
\\[2mm] 
&&-\left(\partial_t \overline{{\cal Q}} +{\cal L}\overline{{\cal Q}}+\e_0 \l -L_0\right)
=\left(k+r-\e_0\right)e^{\max(k,0)T-kt}\l+ L_0\geq f_0,
\end{eqnarray*}
where we have used the fact that $\e_1>\e_0$ and $r\zeta_i>0,i=0,1$. So, the comparison principle for PDEs implies that 
$$
0\leq {\cal Q}_0,\;{\cal Q}_1\leq e^{\max(\e_1-r,0)T-(\e_1-r)t}\l\;\;\mbox{in}\;\;\overline{\cal N}_T.
$$

From \eqref{de:Q0-and-Q1}, we know that ${\cal Q}_1-{\cal Q}_0$ satisfies PDE
\begin{equation*}
    \begin{cases}
    &-\left[\partial_t ({\cal Q}_1-{\cal Q}_0) +{\cal L}({\cal Q}_1-{\cal Q}_0) +(\e_1-\e_0)\l -(L_1-L_0)\right]=-\partial_t {\cal P}-{\cal L}{\cal P}-(\e_1-\e_0)\l +(L_1-L_0);\vspace{2mm}\\ 
    &{\cal Q}_1(T,\l)- {\cal Q}_0(T,\l)=0= {\cal P}(T,\l),\quad\l\in(0,+\infty).
    \end{cases}
\end{equation*}
Hence, the uniqueness of the strong solution of the above PDE implies that ${\cal Q}_1-{\cal Q}_0={\cal P}$ satisfies VI \eqref{eq:VI-P}, and $-\zeta_1\l\leq {\cal Q}_1-{\cal Q}_0\le \zeta_0\l$, and 
\begin{equation*}
    \begin{cases}
    \partial_t {\cal Q}_1 +{\cal L}{\cal Q}_1 +\e_1 \l -L_1=-f_1\le0,\;\;
    \partial_t {\cal Q}_0 +{\cal L}{\cal Q}_0 +\e_0 \l -L_0=-f_0=0&\mbox{if}\;{\cal P}(t,\l)=-\zeta_1\l;\vspace{2mm}\\ 
    \partial_t {\cal Q}_1 +{\cal L}{\cal Q}_1 +\e_1 \l -L_1=-f_1=0,\;\;
    \partial_t {\cal Q}_0 +{\cal L}{\cal Q}_0 +\e_0 \l -L_0=-f_0=0&\mbox{if}\;-\zeta_1\l<{\cal P}(t,\l)<\zeta_0\l;\vspace{2mm}\\     
    \partial_t {\cal Q}_1 +{\cal L}{\cal Q}_1 +\e_1 \l -L_1=-f_1=0,\;\;
    \partial_t {\cal Q}_0 +{\cal L}{\cal Q}_0 +\e_0 \l -L_0=-f_0\le0&\mbox{if}\;{\cal P}(t,\l)=\zeta_0\l.
    \end{cases}
\end{equation*}
So, ${\cal Q}_0$ and ${\cal Q}_1$ defined in \eqref{de:Q0-and-Q1} satisfy \eqref{eq:HJB-1} and \eqref{eq:HJB-2}, respectively.
\end{proof}

\section{Double Obstacle Problem}\label{sec:double-obstacle}

In this section, we focus on the analysis of the double obstacle problem \eqref{eq:VI-P} derived in the previous section. We investigate aspects such as the uniqueness, existence, and regularity of the solution to the double obstacle problem and its corresponding two free boundaries.

Let 
\begin{equation}\label{tran-P-u}
u(\tau,x)={\cal P}(t,\lambda)/\lambda,\quad 
x=\ln\lambda,\quad \tau=T-t,
\end{equation}
then $u$ satisfies the following VI
\begin{equation}
    \label{eq:VI-u}
    \begin{cases}
        \partial_\tau u - {\cal L}_xu -(\e_1-\e_0)+(L_1-L_0)e^{-x}=0\;\;\;&\mbox{in}\;\;
        \{(\tau,x)\in{\cal U}_T:-\zeta_1 <u(\tau,x)< \zeta_0\}; \vspace{2mm}\\
        \partial_\tau u - {\cal L}_xu -(\e_1-\e_0)+(L_1-L_0)e^{-x}\ge 0\;\;\;&\mbox{in}\;\;
        \{(\tau,x)\in{\cal U}_T:u(\tau,x)=-\zeta_1\}; \vspace{2mm}\\
        \partial_\tau u - {\cal L}_xu -(\e_1-\e_0)+(L_1-L_0)e^{-x}\le 0&\mbox{in}\;\;
        \{(\tau,x)\in{\cal U}_T:u(\tau,x)=\zeta_0\}; \vspace{2mm}\\
        -\zeta_1\le u \le \zeta_0\;\;\mbox{in}\;\;\overline{\cal U}_T; &u(0,x)=0,\;\;x\in  \mathbb{R} 
    \end{cases}
\end{equation}
with 
\begin{equation}
    \label{differential operator for widetilde L}
		 {\cal L}_x:=\dfrac{\t^2}{2}\partial_{xx}+ \left(\beta -r+{\frac{\t^2}{2}}\right)\partial_{x}- r,\qquad 
   {\cal U}_T:=(0,T]\times \mathbb{R},\quad \overline{\cal U}_T:=[0,T]\times \mathbb{R}.
\end{equation}

\subsection{The Existence and Uniqueness of the Strong Solution}

In this section, we establish the existence and uniqueness of the strong solution of VI \eqref{eq:VI-u}. We firstly consider the bounded problems \eqref{eq:VI-un}, which converge to VI \eqref{eq:VI-u}.
\begin{equation}
    \label{eq:VI-un}
    \begin{cases}
        \partial_\tau u_n - {\cal L}_xu_n-
        (\e_1-\e_0)+(L_1-L_0)e^{-x}=0\;\;\;&\mbox{in}\;\;
        \{(\tau,x)\in{\cal U}_T^n:-\zeta_1 <u_n(\tau,x)< \zeta_0\}; \vspace{2mm}\\
        \partial_\tau u_n - {\cal L}_xu_n -(\e_1-\e_0)+(L_1-L_0)e^{-x}\ge 0\;\;\;&\mbox{in}\;\;
        \{(\tau,x)\in{\cal U}_T^n:u_n(\tau,x)=-\zeta_1\}; \vspace{2mm}\\
        \partial_\tau u_n - {\cal L}_xu_n -(\e_1-\e_0)+(L_1-L_0)e^{-x}\le 0&\mbox{in}\;\;
        \{(\tau,x)\in{\cal U}_T^n:u_n(\tau,x)=\zeta_0\}; \vspace{2mm}\\
        -\zeta_1\le u_n \le \zeta_0\;\;\mbox{in}\;\;\overline{\cal U}^n_T;\qquad 
        &u_n(0,x)=0,\,x\in[-n,n]; \vspace{2mm}\\
        u_n(\tau,n)=\varphi_+(\tau),\qquad 
        u_n(\tau,-n)=\varphi_{-,n}(\tau),&\tau\in[0,T],
    \end{cases}
\end{equation}
where $n\in \mathbb{Z}_+$, and ${\cal U}_T^n:=(0,T]\times (-n,n),\,\overline{\cal U}_T^n:=[0,T]\times [-n,n]$, and 
\begin{equation}    \label{de:varphi+}
   \varphi_+(\tau)
   :=\frac{\e_1-\e_0}{r}\left(1-e^{-r\tau}\right)
   {\rm\bf I}_{\{\tau\leq T_1\}}
   +\zeta_0{\rm\bf I}_{\{\tau>T_1\}},\quad
   \varphi_{-,n}(\tau):=-L_1e^{n}\tau{\rm\bf I}_{\{\tau\leq \zeta_1e^{-n}/L_1\}}-\zeta_1{\rm\bf I}_{\{\tau>\zeta_1e^{-n}/L_1\}}
\end{equation}
for any $\tau\in[0,T]$, here $T_1$ is defined in \eqref{de:T1}.

In order to achieve the existence and uniqueness of the strong solution of VI \eqref{eq:VI-un}, we use the penalty problems \eqref{eq:PDE-unep} to converge to the bounded problems \eqref{eq:VI-un},
\begin{equation}
    \label{eq:PDE-unep}
    \begin{cases}
        \partial_\tau u_{\varepsilon,n} - {\cal L}_xu_{\varepsilon,n}-(\e_1-\e_0)+(L_1-L_0)e^{-x}-\beta_{1,\varepsilon}(u_{\varepsilon,n}+\zeta_1)
        -\beta_{2,\varepsilon}(u_{\varepsilon,n}-\zeta_0)=0
        \;\;&\mbox{in}\;\;
        {\cal U}_T^n; \vspace{2mm}\\
        u_{\varepsilon,n}(0,x)=0,\;x\in  \mathbb{R};\qquad\qquad\qquad
        u_{\varepsilon,n}(\tau,n)=\varphi_+(\tau),\quad
        u_{\varepsilon,n}(\tau,-n)=\varphi_{-,n}(\tau),\;&\tau\in[0,T],
    \end{cases}
\end{equation}
where the penalty functions $\beta_{1,\varepsilon},\beta_{2,\varepsilon}\in C^2(\mathbb{R})$ are defined as
\begin{equation*}
\begin{aligned}
	&\beta_{1,\varepsilon}\geq0;\\
	&\beta_{1,\varepsilon}(\xi)=0,\quad \text{if }\xi\geq \varepsilon;\\
	&\beta_{1,\varepsilon}(\xi)=L_1e^n\left(1-{2\frac{\xi}{\varepsilon}}\right),\;\;\text{if }\xi\leq0;\\
	&\beta'_{1,\varepsilon}\leq0;\quad \beta^{\prime\prime}_{1,\varepsilon}\geq0,
\end{aligned}\qquad\qquad
\begin{aligned}
	&\beta_{2,\varepsilon}\leq0;\\
	&\beta_{2,\varepsilon}(\xi)=0,\quad \text{if }\xi\leq -\varepsilon;\\
	&\beta_{2,\varepsilon}(\xi)=(\e_0-\e_1)\left(1+{2\frac{\xi}{\varepsilon}}\right),\;\;\text{if }\xi\geq0;\\
	&\beta'_{2,\varepsilon}\leq0;\quad \beta^{\prime\prime}_{2,\varepsilon}\leq0.
\end{aligned}
\end{equation*}

In the first, we show some estimates about $ u_{\varepsilon,n}$ via the following lemma.

\begin{lem}\label{lemma for penalty problem}
For any $0<\varepsilon<\min(\zeta_0,\zeta_1),\,n\in \mathbb{Z}_+$, the unique classical solution $u_{\varepsilon,n}\in C(\overline{\cal U}_T^n)\cap C^{1,2}({\cal U}_T^n)$ of PDE \eqref{eq:PDE-unep} has the following estimates 
$$
\varphi_{-,n}\leq u_{\varepsilon,n}\leq \varphi_+,\;\; 
\partial_x u_{\varepsilon,n}\geq0,\;\; 
-L_1e^{-\beta(\tau-\zeta_1e^{-n}/L_1)-x}
\leq \partial_\tau u_{\varepsilon,n}
\leq (\e_1-\e_0)e^{-r\tau}\;\;\mbox{in}\;\;\overline{\cal U}_T^n.
$$
Moreover, if $n$ is large enough, then we have the following estimates,
\begin{equation}\label{cone estimation}    
 -C\partial_x u_{\varepsilon,n}\leq \tau\partial_\tau u_{\varepsilon,n}\leq 
 Ce^{x}\partial_x u_{\varepsilon,n}\;\;\mbox{in}\;\;\overline{\cal U}_T^{n/2},
\end{equation}
where $C$ is a constant independent of $\tau,x,n$ and $\varepsilon$.
\end{lem}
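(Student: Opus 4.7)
My plan is to establish the five conclusions of the lemma in the stated order, starting from classical parabolic theory and ending with the interior cone estimate, which I expect to be the main obstacle. For fixed $\varepsilon\in(0,\min(\zeta_0,\zeta_1))$ and $n\in\mathbb{Z}_+$, \eqref{eq:PDE-unep} is a semilinear parabolic problem on the bounded cylinder $\overline{\mathcal{U}}_T^n$ with $C^2$ nonlinearity and continuous, compatible initial and lateral boundary data (note $\varphi_+(0)=0=\varphi_{-,n}(0)$). Classical $L^p$ and Schauder theory as in \cite{Li96} therefore yields a unique solution $u_{\varepsilon,n}\in C(\overline{\mathcal{U}}_T^n)\cap C^{1,2}(\mathcal{U}_T^n)$. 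I then verify by direct computation that $\varphi_+$ is a supersolution and $\varphi_{-,n}$ is a subsolution of \eqref{eq:PDE-unep}: the supersolution check splits into $\tau\leq T_1$ (where $\partial_\tau\varphi_+-\mathcal{L}_x\varphi_+=\epsilon_1-\epsilon_0$ exactly cancels the forcing) and $\tau>T_1$ (where $\varphi_+=\zeta_0$ and the penalty term $-\beta_{2,\varepsilon}(0)=\epsilon_1-\epsilon_0$ absorbs the linear part, with Assumption \ref{as:cost} and \eqref{de:T1} controlling the residual); the subsolution inequality for $\varphi_{-,n}$ is analogous, the normalization $L_1e^n$ being chosen so that $-\beta_{1,\varepsilon}(0)=L_1e^n$ balances the piecewise-linear slope. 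The standard comparison principle then delivers $\varphi_{-,n}\leq u_{\varepsilon,n}\leq \varphi_+$.

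Differentiating \eqref{eq:PDE-unep} in $x$ yields a linear parabolic equation for $v:=\partial_x u_{\varepsilon,n}$ with non-negative source $(L_1-L_0)e^{-x}$ and zero initial data; since the previous step shows that $u_{\varepsilon,n}(\tau,\pm n)=\varphi_\pm(\tau)$ are extremal along each horizontal section, one-sided difference quotients give $v(\tau,\pm n)\geq 0$, and the maximum principle delivers $v\geq 0$. Differentiating in $\tau$ instead produces a homogeneous linear equation for $w:=\partial_\tau u_{\varepsilon,n}$. Using the identities $\mathcal{L}_x e^{-r\tau}=-re^{-r\tau}$ and $\mathcal{L}_x e^{-x}=-\beta e^{-x}$, I check that $\bar w(\tau):=(\epsilon_1-\epsilon_0)e^{-r\tau}$ and $\underline w(\tau,x):=-L_1 e^{-\beta(\tau-\zeta_1 e^{-n}/L_1)-x}$ are, respectively, a super- and subsolution of the linear equation for $w$. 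The initial value $w(0,x)=(\epsilon_1-\epsilon_0)-(L_1-L_0)e^{-x}$ (recovered from \eqref{eq:PDE-unep} at $\tau=0$, using $u_{\varepsilon,n}(0,\cdot)\equiv 0$ together with $\beta_{1,\varepsilon}(\zeta_1)=\beta_{2,\varepsilon}(-\zeta_0)=0$) lies between $\underline w(0,\cdot)$ and $\bar w(0)$, as do the lateral derivatives $\partial_\tau\varphi_+$ and $\partial_\tau\varphi_{-,n}$ computed from \eqref{de:varphi+}. Comparison then yields the claimed two-sided bound on $w$.

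The cone estimate \eqref{cone estimation} is the hardest step. Let $\widetilde{\mathcal{L}}:=\partial_\tau-\mathcal{L}_x-\bigl(\beta'_{1,\varepsilon}(u_{\varepsilon,n}+\zeta_1)+\beta'_{2,\varepsilon}(u_{\varepsilon,n}-\zeta_0)\bigr)$ be the linearization of the penalized PDE around $u_{\varepsilon,n}$. A short computation yields
\begin{equation*}
\widetilde{\mathcal{L}}(\tau w)=w,\qquad \widetilde{\mathcal{L}}(v)=(L_1-L_0)e^{-x},\qquad \widetilde{\mathcal{L}}(e^x v)=(L_1-L_0)-\theta^2 e^x \partial_x v-(\beta-r+\theta^2)e^x v.
\end{equation*}
For the lower half $-C\partial_x u_{\varepsilon,n}\leq \tau\partial_\tau u_{\varepsilon,n}$ I consider $\Psi_1:=\tau w+Cv$ with $C:=L_1/(L_1-L_0)$: the bound $w\geq\underline w$ from the previous step gives $w\geq -L_1 e^{-x}$ throughout $\tau\geq\zeta_1 e^{-n}/L_1$, which contains $\overline{\mathcal{U}}_T^{n/2}$ for $n$ sufficiently large, whence $\widetilde{\mathcal{L}}\Psi_1=w+C(L_1-L_0)e^{-x}\geq 0$. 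The upper half is handled symmetrically with $\Psi_2:=\tau w-C e^x v$, using $v\geq 0$ and the upper bound on $w$, and enlarging $C$ so that the $-\theta^2 e^x\partial_x v$ term is absorbed against the favourable constant $-(L_1-L_0)$. The subtlest point is that the parabolic boundary of $\overline{\mathcal{U}}_T^{n/2}$ at $x=\pm n/2$ carries no explicit data for $\Psi_1$ or $\Psi_2$; I plan to bridge this gap by perturbing each $\Psi_i$ with a small barrier of the form $\delta(e^{\gamma x}+e^{-\gamma x})$ with $\gamma$ small, verified to be a sub/supersolution of $\widetilde{\mathcal{L}}$ and large enough to dominate $|\Psi_i|$ on $x=\pm n/2$ by means of the interior Schauder estimates for $u_{\varepsilon,n}$ on $\overline{\mathcal{U}}_T^{3n/4}$. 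Applying the maximum principle to the perturbed functions on $\overline{\mathcal{U}}_T^{n/2}$ and then sending $\delta\to 0$ gives the cone estimate with a constant $C$ independent of $\tau, x, n$ and $\varepsilon$.
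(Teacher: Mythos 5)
Your first four estimates (the comparison with $\varphi_{-,n},\varphi_+$, the sign of $\partial_x u_{\varepsilon,n}$, and the two-sided bound on $\partial_\tau u_{\varepsilon,n}$) are fine and essentially reproduce the paper's argument. The genuine problems are in your treatment of the cone estimate \eqref{cone estimation}, in two places. First, you apply the maximum principle on the truncated cylinder $\overline{\cal U}_T^{n/2}$, where neither $\tau\partial_\tau u_{\varepsilon,n}$ nor $\partial_x u_{\varepsilon,n}$ carries any boundary data, and you propose to bridge this with a barrier $\delta(e^{\gamma x}+e^{-\gamma x})$ that is simultaneously ``small'' and ``large enough to dominate $|\Psi_i|$ on $x=\pm n/2$,'' after which you send $\delta\to0$. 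These requirements are incompatible: the only available control on $\partial_\tau u_{\varepsilon,n}$ at $x=-n/2$ is the lower bound $-L_1e^{-\beta(\tau-\zeta_1e^{-n}/L_1)-x}\sim -L_1e^{n/2}$ (interior Schauder estimates for the penalized equation would in any case carry constants depending on $\varepsilon$ and $n$, since $\beta_{1,\varepsilon}'$ is of size $e^n/\varepsilon$), so domination on the lateral boundary forces $\delta\gtrsim e^{(1-\gamma)n/2}$; and since your barrier does not vanish at the interior points where the estimate is asserted, you cannot remove it except by letting $\delta\to0$, which destroys the boundary domination. The paper avoids this altogether: it works on the \emph{full} cylinder $\overline{\cal U}_T^{n}$, where $\tau\partial_\tau u_{\varepsilon,n}(\tau,\pm n)$ is known from the boundary data and $\partial_x u_{\varepsilon,n}(\tau,\pm n)\geq0$, and for each estimation point $x_0\in[-n/2,n/2]$ adds the auxiliary function $\psi(\tau,x;x_0)=e^{k\tau}\bigl(e^{2|x-x_0|}-2|x-x_0|-1\bigr)$, which vanishes at $x=x_0$ (so the conclusion at $(\tau,x_0)$ is unpolluted) while exploding like $e^{2|n\mp x_0|}$ at $x=\pm n$ (so it dominates the boundary terms); its supersolution defect, supported in $\{|x-x_0|\leq1\}$, is absorbed by the $K(L_1-L_0)e^{-x}$ term.

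Second, your choice $\Psi_2=\tau w-Ce^{x}v$ is not workable as stated: by your own computation, $\widetilde{\mathcal{L}}(e^xv)$ contains the term $-\t^2e^x\partial_xv=-\t^2e^x\partial_{xx}u_{\varepsilon,n}$, which has no sign and for which no bound (let alone one uniform in $\varepsilon$ and $n$) is available; ``enlarging $C$'' cannot absorb it, because both the favourable term $-C(L_1-L_0)$ and the bad term scale linearly in $C$. This is precisely why the paper freezes the weight at the estimation point, comparing $\tau\partial_\tau u_{\varepsilon,n}$ with $Ke^{x_0}\partial_x u_{\varepsilon,n}$ (plus the $\psi$-correction): then only $Ke^{x_0}(L_1-L_0)e^{-x}$ appears, no second derivatives of $u_{\varepsilon,n}$ enter, and on the set $\{|x-x_0|\leq1\}$ where compensation is needed one has $e^{x_0-x}\geq e^{-1}$, which fixes the admissible size of $K$. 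Minor issues (the constant $C=L_1/(L_1-L_0)$ must be slightly enlarged to accommodate the factor $e^{\beta\zeta_1e^{-n}/L_1}$ for $\tau<\zeta_1e^{-n}/L_1$) are easily repaired, but the two points above are real gaps: as written, the cone estimate --- the heart of the lemma and the input for the later regularity of the free boundaries --- is not proved.
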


\begin{proof} From the theory for PDE in \citet{Li96}, we know that PDE \eqref{eq:PDE-unep} has a unique classical solution $u_{\varepsilon,n}\in C(\overline{\cal U}_T^n)\cap C^{1,2}({\cal U}_T^n)$. We firstly compute that 
\begin{eqnarray*}
&&\partial_\tau \varphi_{-,n} - {\cal L}_x\varphi_{-,n}
-(\e_1-\e_0)+(L_1-L_0)e^{-x}
-\beta_{1,\varepsilon}(\varphi_{-,n}+\zeta_1)
-\beta_{2,\varepsilon}(\varphi_{-,n}-\zeta_0)
\\[2mm]
&=&\left[\varphi^\prime_{-,n}+r\varphi_{-,n}
-\beta_{1,\varepsilon}(\varphi_{-,n}+\zeta_1)\right]
-\left[(\e_1-\e_0)-(L_1-L_0)e^{-x}\right]
\\[2mm]
&\leq&-L_1e^{n}(1+r\tau){\rm\bf I}_{\{\tau\leq \zeta_1e^{-n}/L_1\}}
-\left[r\zeta_1+\beta_{1,\varepsilon}(0)\right]{\rm\bf I}_{\{\tau>\zeta_1e^{-n}/L_1\}}
+(L_1-L_0)e^{n}<0\;\;\mbox{in}\;\;{\cal U}_T^n.
\end{eqnarray*}
Moreover, it is clear that $\varphi_{-,n}\in W^{1,2}_{p,loc}({\cal U}^n_T)\cap C(\overline{\cal U}_T^n)$ for any $p\geq1$, and 
$$
 \varphi_{-,n}(\tau)=u_{\varepsilon,n}(\tau,-n)\leq 0=u_{\varepsilon,n}(0,x)\leq\varphi_{+}(\tau)
 =u_{\varepsilon,n}(\tau,n),\;\forall\;\tau\in[0,T],\;x\in[-n,n].
$$
Applying the comparison theorem for PDEs to PDEs of $\varphi_{-,n}$ and $u_{\varepsilon,n}$ in the domain ${\cal U}_T^n$, we know that
$$
 u_{\varepsilon,n}\geq \varphi_{-,n}\;\;\mbox{in}\;\;
 \overline{\cal U}_T^n.
$$

On the other hand, we can check that 
\begin{eqnarray*}
&&\partial_\tau \varphi_{+} - {\cal L}_x\varphi_{+}
-(\e_1-\e_0)+(L_1-L_0)e^{-x}
-\beta_{1,\varepsilon}(\varphi_{+}+\zeta_1)
-\beta_{2,\varepsilon}(\varphi_{+}-\zeta_0)
\\[2mm]
&=&\left[\varphi^\prime_++r\varphi_+
-\beta_{2,\varepsilon}(\varphi_{+}-\zeta_0)\right]
-\left[(\e_1-\e_0)-(L_1-L_0)e^{-x}\right]
\\[2mm]
&>&(\e_1-\e_0){\rm\bf I}_{\{\tau\leq T_1\}}
+\left[r\zeta_0-\beta_{2,\varepsilon}(0)\right]{\rm\bf I}_{\{\tau>T_1\}}
-(\e_1-\e_0)\geq0\;\;\mbox{in}\;\;{\cal U}_T^n.
\end{eqnarray*}
Moreover, it is clear that $\varphi_+\in W^{1,2}_{p,loc}({\cal U}^n_T)\cap C(\overline{\cal U}_T^n)$ for any $p\geq1$, and 
$$
 \varphi_{+}(\tau)=u_{\varepsilon,n}(\tau,n)\geq 0=u_{\varepsilon,n}(0,x)\geq \varphi_{-,n}(\tau)
 =u_{\varepsilon,n}(\tau,-n),\;\forall\;\tau\in[0,T],\;x\in[-n,n].
$$
Applying the comparison theorem for PDEs to PDEs of $\varphi_+$ and $u_{\varepsilon,n}$ in the domain ${\cal U}_T^n$, we know that
$$
 u_{\varepsilon,n}\leq \varphi_+\;\;\mbox{in}\;\;
 \overline{\cal U}_T^n.
$$

Since 
$$
 u_{\varepsilon,n}(\tau,-n)=\varphi_{-,n}(\tau)\leq u_{\varepsilon,n}(\tau,x)\leq \varphi_+(\tau)=u_{\varepsilon,n}(\tau,n),\;\;
 u_{\varepsilon,n}(0,x)=0,\;\;
 \forall\;\tau\in[0,T],\;x\in[-n,n],
$$
we deduce that 
$$
 \partial_x u_{\varepsilon,n}(\tau,-n)\geq0,\;\;
 \partial_x u_{\varepsilon,n}(\tau,n)\geq0,\;\;
 \partial_x u_{\varepsilon,n}(0,x)=0,\;\;
 \forall\;\tau\in[0,T],\;x\in[-n,n].
$$
Moreover, differentiating PDE \eqref{eq:PDE-unep} with respect to $x$, we know that $\partial_x u_{\varepsilon,n}$ satisfies
\begin{equation}\label{eq:PDE-partial-x-unep}
 \partial_\tau (\partial_x u_{\varepsilon,n})- {\cal L}_x (\partial_x u_{\varepsilon,n})
-\beta^\prime_{1,\varepsilon} (u_{\varepsilon,n}+\zeta_1)(\partial_x u_{\varepsilon,n})
-\beta^\prime_{2,\varepsilon}(u_{\varepsilon,n}-\zeta_0)(\partial_x u_{\varepsilon,n})=(L_1-L_0)e^{-x}
\geq0.
\end{equation}
So, the comparison theorem for PDEs implies that 
$$
 \partial_x u_{\varepsilon,n}\geq0\;\;\mbox{in}
 \;\;\overline{\cal U}_T^n.
$$

Differentiating PDE \eqref{eq:PDE-unep} with respect to $\tau$, we deduce that $\partial_\tau u_{\varepsilon,n}$ satisfies
\begin{equation}\label{eq:PDE-partial-tau-unep}
    \begin{cases}
    \partial_\tau (\partial_\tau u_{\varepsilon,n})- {\cal L}_x (\partial_\tau u_{\varepsilon,n})
    -\beta^\prime_{1,\varepsilon} (u_{\varepsilon,n}+\zeta_1)(\partial_\tau u_{\varepsilon,n})
    -\beta^\prime_{2,\varepsilon}(u_{\varepsilon,n}-\zeta_0)(\partial_\tau u_{\varepsilon,n})=0; 
    \vspace{2mm}\\
    \partial_\tau u_{\varepsilon,n}(0,x)={\cal L}_x0+(\e_1-\e_0)-(L_1-L_0)e^{-x}+\beta_{1,\varepsilon}(\zeta_1)+\beta_{2,\varepsilon}(-\zeta_0)
    =(\e_1-\e_0)-(L_1-L_0)e^{-x};
    \vspace{2mm}\\
    \partial_\tau u_{\varepsilon,n}(\tau,-n)
    =\varphi^\prime_{-,n}(\tau)
    =-L_1e^{n}{\rm\bf I}_{\{\tau\leq \zeta_1e^{-n}/L_1\}},\quad
    \partial_\tau u_{\varepsilon,n}(\tau,n)
    =\varphi^\prime_+(\tau)
    =(\e_1-\e_0)e^{-r\tau}{\rm\bf I}_{\{\tau\leq T_1\}}.
    \end{cases}
\end{equation}

Temporarily denote 
$$ 
 \underline{v}=-L_1e^{-\beta(\tau-\zeta_1e^{-n}/L_1)-x}<0,
$$
then it is not difficult to check that 
\begin{equation*}
    \begin{cases}
    \partial_\tau \underline{v}- {\cal L}_x \underline{v}
    -\beta^\prime_{1,\varepsilon} (u_{\varepsilon,n}+\zeta_1)\underline{v}
    -\beta^\prime_{2,\varepsilon}(u_{\varepsilon,n}-\zeta_0)\underline{v}
    \leq \partial_\tau \underline{v}- {\cal L}_x \underline{v}=0; 
    \vspace{2mm}\\
    \underline{v}(0,x)\leq \partial_\tau u_{\varepsilon,n}(0,x);\qquad 
    \underline{v}(\tau,n)
    <0\leq \partial_\tau u_{\varepsilon,n}(\tau,n);
    \qquad 
    \underline{v}(\tau,-n)\le
    \partial_\tau u_{\varepsilon,n}(\tau,-n).
    \end{cases}
\end{equation*}
So, the comparison theorem for PDEs implies that 
\begin{equation}\label{lower bound-partial-tau-unep}
 \partial_\tau u_{\varepsilon,n}\geq\underline{v}
 =-L_1e^{-\beta(\tau-\zeta_1e^{-n}/L_1)-x}\;\;\mbox{in}
 \;\;\overline{\cal U}_T^n.
\end{equation}

On the other hand, we temporarily denote 
$$ 
 \overline{v}=(\e_1-\e_0)e^{-r\tau}>0,
$$
then it is not difficult to check that 
\begin{equation*}
    \begin{cases}
    \partial_\tau \overline{v}- {\cal L}_x \overline{v}
    -\beta^\prime_{1,\varepsilon} (u_{\varepsilon,n}+\zeta_1)\overline{v}
    -\beta^\prime_{2,\varepsilon}(u_{\varepsilon,n}-\zeta_0)\overline{v}
    \leq \partial_\tau \overline{v}- {\cal L}_x \overline{v}=0; 
    \vspace{2mm}\\
    \overline{v}(0,x)>\partial_\tau u_{\varepsilon,n}(0,x);\qquad 
    \overline{v}(\tau,n)\geq \partial_\tau u_{\varepsilon,n}(\tau,n);
    \qquad 
    \overline{v}(\tau,-n)> 0\geq 
    \partial_\tau u_{\varepsilon,n}(\tau,-n).
    \end{cases}
\end{equation*}
So, the comparison theorem for PDEs implies that 
\begin{equation}\label{upper bound-partial-tau-unep}
 \partial_\tau u_{\varepsilon,n}\leq\overline{v}
 =(\e_1-\e_0)e^{-r\tau}\;\;\mbox{in}
 \;\;\overline{\cal U}_T^n.
\end{equation}

From PDE \eqref{eq:PDE-partial-tau-unep}, we know that
\begin{equation}\label{eq:PDE-tau-partial-tau-unep}
    \begin{cases}
    \partial_\tau (\tau\partial_\tau u_{\varepsilon,n})- {\cal L}_x (\tau\partial_\tau u_{\varepsilon,n})
    -\beta^\prime_{1,\varepsilon} (u_{\varepsilon,n}+\zeta_1)(\tau\partial_\tau u_{\varepsilon,n})
    -\beta^\prime_{2,\varepsilon}(u_{\varepsilon,n}-\zeta_0)(\tau\partial_\tau u_{\varepsilon,n})
    =\partial_\tau u_{\varepsilon,n}; 
    \vspace{2mm}\\
    (\tau\partial_\tau u_{\varepsilon,n})(0,x)=0;\;
    (\tau\partial_\tau u_{\varepsilon,n})(\tau,n)
    =(\e_1-\e_0)e^{-r\tau}\tau{\rm\bf I}_{\{\tau\leq T_1\}};\;
    \partial_\tau u_{\varepsilon,n}(\tau,-n)
    =-L_1e^{n}\tau{\rm\bf I}_{\{\tau\leq \zeta_1e^{-n}/L_1\}}.
    \end{cases}
\end{equation}

In order to achieve \eqref{cone estimation} , we construct an auxiliary function $\psi$ as follows
\begin{equation}\label{def: psi}
  \psi(\tau,x;x_0)=e^{k\tau}(e^{2|x-x_0|}-2|x-x_0|-1)\geq0\;\;
  \mbox{with}\;\; k=4\beta+3r+6\t^2+2.
\end{equation}
It is easy to check that $\psi(\cdot;x_0)\in W^{1,2}_{p,loc}({\cal U}^n_T)\cap C(\overline{\cal U}_T^n)$ for any $p\geq1$, and 
\begin{eqnarray}\nonumber 
 &&\partial_\tau\psi-\mathcal{L}_x\psi
 -\beta^\prime_{1,\varepsilon}(u_{\varepsilon,n}+\zeta_1)\psi
 -\beta^\prime_{2,\varepsilon}(u_{\varepsilon,n}-\zeta_0)\psi \geq\partial_\tau\psi-\mathcal{L}_x\psi
 \\[2mm]\nonumber 
 &=&e^{k\tau}
 \left[(k+r)(e^{2|x-x_0|}-2|x-x_0|-1)-2\theta^2
 e^{2|x_0-x|}-\left(2\beta-2r+\theta^2\right){\rm sign}(x-x_0)\left(e^{2|x_0-x|}-1\right)\right]
 \\[2mm]\nonumber
 &\geq&e^{k\tau}
 \left[\frac{k+r}{2}\left(e^{2|x-x_0|}-4|x-x_0|-2\right) +\left(\frac{k+r}{2}-2\beta-2r-3\theta^2\right)e^{2|x-x_0|}\right]
 \\[2mm]\label{eq:PDE-psi}
 &\geq& e^{k\tau}\left[-(k+r){\rm\bf I}_{\{|x-x_0|\leq 1\}}+e^{2|x-x_0|}\right].
\end{eqnarray}

Temporally denote
$$
 v=\tau\partial_\tau u_{\varepsilon,n}
 +K\partial_x u_{\varepsilon,n}
 +e^{-\max(x_0,0)}\psi(\cdot;x_0),\quad K=\frac{L_1+k+r}{L_1-L_0}e^{1+kT},
$$
then combining \eqref{eq:PDE-tau-partial-tau-unep}, \eqref{eq:PDE-partial-x-unep}, \eqref{eq:PDE-psi} and \eqref{lower bound-partial-tau-unep}, we know that $v$ satisfies
\begin{equation*}
\left\{\begin{aligned}	
  &\partial_\tau v
  -\mathcal{L}_xv
  -\beta^\prime_{1,\varepsilon}(u_{\varepsilon,n}+\zeta_1)v
  -\beta^\prime_{2,\varepsilon}(u_{\varepsilon,n}-\zeta_0)v  
  \\[2mm]&\qquad \qquad 
  \geq \partial_\tau u_{\varepsilon,n}
  +K(L_1-L_0)e^{-x}-(k+r)e^{k\tau-\max(x_0,0)}{\rm\bf I}_{\{|x-x_0|\leq 1\}}
  \\[2mm]&\qquad\qquad  
  \geq -L_1e^{-\beta(\tau-\zeta_1e^{-n}/L_1)-x}+K(L_1-L_0)e^{-x}-(k+r)e^{k\tau-\max(x_0,0)}{\rm\bf I}_{\{|x-x_0|\leq 1\}}\geq0;
  \\[2mm]
  &v(0, x)\geq0;\qquad v(\tau,n)\geq0;
  \\[2mm]
  &v(\tau,-n)\geq -L_1 e^{n}\tau{\rm\bf I}_{\{\tau\leq \zeta_1e^{-n}/L_1\}}
  +e^{-\max(x_0,0)}\left(e^{2|n+x_0|}-2|n+x_0|-1\right)
  \geq-\zeta_1+e^{-n/2}\left(e^{n}-n-1\right)
  \geq0
\end{aligned}\right.
\end{equation*}
provided that $x_0\in[-n/2,n/2]$ and $n$ is large enough.  So, the comparison principle for PDEs implies that $v\geq0$ in $\overline{\cal U}^n_T$ and
$$
 \tau\partial_\tau u_{\varepsilon,n}(\tau,x_0)\geq 
 -K\partial_x u_{\varepsilon,n}(\tau,x_0),\;\;\forall\;\;(\tau,x_0)\in 
 [0,T]\times[-n/2,n/2].
$$

On the other hand, temporally denote
$$
 v=\tau\partial_\tau u_{\varepsilon,n}-Ke^{x_0}\partial_x u_{\varepsilon,n}
 -(\e_1-\e_0)\psi,\quad K=\frac{(k+r)(\e_1-\e_0)}{L_1-L_0}e^{1+kT},
$$
then combining \eqref{eq:PDE-tau-partial-tau-unep}, \eqref{eq:PDE-partial-x-unep}, \eqref{eq:PDE-psi}  and \eqref{upper bound-partial-tau-unep}, we know that $w$ satisfies
\begin{equation*}
\left\{\begin{aligned}	
  &\partial_\tau v
  -\mathcal{L}_xv
  -\beta^\prime_{1,\varepsilon}(u_{\varepsilon,n}+\zeta_1)v
  -\beta^\prime_{2,\varepsilon}(u_{\varepsilon,n}-\zeta_0)v
  \\[2mm]&\qquad 
  \leq (\e_1-\e_0)e^{-r\tau}-K(L_1-L_0)e^{x_0-x}
  -(\e_1-\e_0)e^{k\tau}\left[e^{2|x-x_0|}-(k+r){\rm\bf I}_{\{|x-x_0|\leq 1\}}\right]\leq0;
  \\[2mm]
  &v(0,x)\leq0;\qquad v(\tau,-n)\leq0;
  \\[2mm]
  &v(\tau,n)\leq (\e_1-\e_0)\tau e^{-r\tau}{\rm\bf I}_{\{\tau\leq T_1\}}
  -(\e_1-\e_0)\left(e^{2|n-x_0|}-2|n-x_0|-1\right)
  \leq0
\end{aligned}\right.
\end{equation*}
provided that $x_0\in[-n,n/2]$ and $n$ is large enough.  So, the comparison principle implies that $v\leq0$ in $\overline{\cal U}^n_T$ and
$$
 \tau\partial_\tau u_{\varepsilon,n}(\tau,x_0)\leq 
 Ke^{x_0}\partial_x u_{\varepsilon,n}(\tau,x_0),\;\;\forall\;\;(\tau,x_0)\in 
 [0,T]\times[-n,n/2].
$$
\end{proof}

Thanking to the estimates in Lemma \ref{lemma for penalty problem}, by the standard argument (see \citet{F1,Yi,JKY2023}), we have the following lemma and theorem.

\begin{lem}\label{lemma for VI in bounded domain}
For any $n\in \mathbb{Z}_+$, the unique strong solution $u_{n}\in W^{1,2}_p({\cal U}_T^n)\cap C(\overline{\cal U}_T^n)$ with any $p\geq1$ of VI \eqref{eq:VI-un} has the following estimates 
$$
\varphi_{-,n}\leq u_{n}\leq \varphi_+,\;\;
\partial_x u_{n}\geq0,\;\;
-L_1e^{-\beta(\tau-\zeta_1e^{-n}/L_1)-x}
\leq \partial_\tau u_{n}
\leq (\e_1-\e_0)e^{-r\tau}\;\;\mbox{in}\;\;\overline{\cal U}_T^n.
$$
Moreover, if $n$ is large enough, then we have the following estimates,
$$
 -C\partial_x u_{n}\leq \tau\partial_\tau u_n\leq 
 Ce^{x}\partial_x u_{n}\;\;\mbox{in}\;\;\overline{\cal U}_T^{n/2},
$$
where $C$ is a constant independent of $\tau,x$ and $n$.
\end{lem}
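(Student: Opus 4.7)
The plan is to obtain $u_n$ as the weak limit, along a subsequence $\varepsilon_k\to 0^+$, of the penalty approximations $u_{\varepsilon,n}$ constructed in Lemma~\ref{lemma for penalty problem}, and then to transfer each of the stated estimates to the limit. The first step is to upgrade the pointwise bounds of Lemma~\ref{lemma for penalty problem} to uniform-in-$\varepsilon$ Sobolev bounds. Rewriting \eqref{eq:PDE-unep} as
\[
{\cal L}_x u_{\varepsilon,n} = \partial_\tau u_{\varepsilon,n} - (\e_1-\e_0) + (L_1-L_0)e^{-x} - \beta_{1,\varepsilon}(u_{\varepsilon,n}+\z_1) - \beta_{2,\varepsilon}(u_{\varepsilon,n}-\z_0),
\]
I would observe that for $\varepsilon<\min(\z_0,\z_1)/2$ the supports of $\xi\mapsto\beta_{1,\varepsilon}(\xi+\z_1)$ and $\xi\mapsto\beta_{2,\varepsilon}(\xi-\z_0)$ are disjoint (they lie in $\{u<\varepsilon-\z_1\}$ and $\{u>\z_0-\varepsilon\}$, respectively), so at any point at most one of them is nontrivial. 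Hence each penalty term is bounded in $L^\infty_\loc$ by the $L^\infty_\loc$ bound on $\partial_\tau u_{\varepsilon,n}$ (plus data terms) supplied by Lemma~\ref{lemma for penalty problem}. Combined with interior $L^p$ estimates for $\partial_\tau-{\cal L}_x$ (\citet{Li96}), this yields uniform-in-$\varepsilon$ bounds on $\|u_{\varepsilon,n}\|_{W^{1,2}_p({\cal K})}$ on any compact ${\cal K}\subset{\cal U}_T^n$ for every $p\geq 1$.

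By weak compactness in $W^{1,2}_p$ and the Sobolev embedding $W^{1,2}_p\hookrightarrow C$ (for $p$ large enough), I can extract a subsequence $\varepsilon_k\to 0^+$ and a function $u_n\in W^{1,2}_{p,\loc}({\cal U}_T^n)\cap C(\overline{{\cal U}_T^n})$ with $u_{\varepsilon_k,n}\rightharpoonup u_n$ weakly in $W^{1,2}_p$ and $u_{\varepsilon_k,n}\to u_n$ uniformly on compact subsets. Uniform convergence transfers the boundary and initial data. Since $\beta_{1,\varepsilon}$ diverges as $\varepsilon\to 0^+$ on $\{u+\z_1<0\}$ while remaining uniformly bounded along the subsequence, the limit must satisfy $u_n\geq-\z_1$; symmetrically $u_n\leq\z_0$. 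On the open set $\{-\z_1<u_n<\z_0\}$ both penalty terms eventually vanish along the subsequence, yielding the interior equation of \eqref{eq:VI-un}. On $\{u_n=-\z_1\}$ only $\beta_{1,\varepsilon}\geq 0$ can contribute, and passage to the weak limit in \eqref{eq:PDE-unep} gives the one-sided inequality $\partial_\tau u_n-{\cal L}_x u_n-(\e_1-\e_0)+(L_1-L_0)e^{-x}\geq 0$; the analogous argument on $\{u_n=\z_0\}$ delivers the reverse inequality. Hence $u_n$ is a strong solution of \eqref{eq:VI-un}.

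The pointwise estimates transfer directly: the sandwich $\varphi_{-,n}\leq u_n\leq\varphi_+$ passes to the uniform limit, while $\partial_x u_n\geq 0$, the bounds on $\partial_\tau u_n$, and the cone estimate \eqref{cone estimation} pass to the weak-$\ast$ limit in $L^\infty_\loc$. For uniqueness, I would invoke the standard comparison principle for parabolic double-obstacle VIs: for two strong solutions $u^{(1)},u^{(2)}$ with the same parabolic boundary data, the sign of the VI on each coincidence set forces $(u^{(1)}-u^{(2)})^+$ to satisfy a homogeneous parabolic inequality with zero boundary data, whence the weak maximum principle yields $u^{(1)}\equiv u^{(2)}$. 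The main technical obstacle is the rigorous verification of the one-sided VI inequalities on the coincidence sets $\{u_n=-\z_1\}$ and $\{u_n=\z_0\}$; the disjointness of the supports of $\beta_{1,\varepsilon}(\cdot+\z_1)$ and $\beta_{2,\varepsilon}(\cdot-\z_0)$, ensured by $\varepsilon<\min(\z_0,\z_1)/2$, is precisely what keeps this bookkeeping clean.
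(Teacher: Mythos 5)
Your proposal is essentially the paper's own route: the paper gives no detailed proof of this lemma, invoking precisely the standard penalty-method limit argument (citing Friedman, Yi, and related work) that you spell out --- uniform-in-$\varepsilon$ bounds from Lemma \ref{lemma for penalty problem}, weak $W^{1,2}_p$ compactness plus uniform convergence, identification of the limit as the strong solution of \eqref{eq:VI-un} with the one-sided inequalities on the coincidence sets, transfer of the pointwise and cone estimates, and uniqueness by comparison. One small repair: the uniform $L^\infty$ bound on the penalty terms should not be derived from the bound on $\partial_\tau u_{\varepsilon,n}$ via the equation (that would also require control of ${\cal L}_x u_{\varepsilon,n}$, which is what you are trying to obtain), but follows directly from $\varphi_{-,n}\le u_{\varepsilon,n}\le\varphi_+$ together with the monotonicity of $\beta_{1,\varepsilon},\beta_{2,\varepsilon}$, giving $0\le\beta_{1,\varepsilon}(u_{\varepsilon,n}+\zeta_1)\le\beta_{1,\varepsilon}(0)=L_1e^{n}$ and $|\beta_{2,\varepsilon}(u_{\varepsilon,n}-\zeta_0)|\le\e_1-\e_0$.
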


\begin{theorem}\label{th-u}
VI \eqref{eq:VI-u} has a unique strong solution $u\in W^{1,2}_{p,loc}({\cal U}_T)\cap C(\overline{\cal U}_T)$ with any $p\geq1$, which has the following estimates 
$$
-\zeta_1\leq u\leq \varphi_+,\; 
\partial_x u\geq0,\;
-L_1e^{-\beta\tau-x}
\leq \partial_\tau u
\leq (\e_1-\e_0)e^{-r\tau},\;
 -C\partial_x u\leq \tau \partial_\tau u\leq Ce^{x}\partial_x u\;\mbox{in}\;\overline{\cal U}_T,
$$
where $C$ is a constant independent of $\tau,x$, and $\varphi_+$ is defined in \eqref{de:varphi+}.
\end{theorem}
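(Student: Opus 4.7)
My plan is to obtain $u$ as a limit of the bounded-domain solutions $\{u_n\}$ constructed in Lemma \ref{lemma for VI in bounded domain}, via a diagonal exhaustion: fix $N \in \mathbb{Z}_+$, restrict to $\overline{\cal U}^N_T$, consider $n \geq 2N$, and then send $N \to \infty$. On any such compact subdomain, Lemma \ref{lemma for VI in bounded domain} provides uniform-in-$n$ pointwise bounds on $u_n$, $\partial_x u_n$, and $\partial_\tau u_n$, together with the cone coupling. To upgrade these to a uniform estimate $\|u_n\|_{W^{1,2}_p(\overline{\cal U}^N_T)} \leq C_{N,p}$ for every $p \in [1,\infty)$, I would invoke the interior $W^{2,p}$-theory for double-obstacle problems applied to VI \eqref{eq:VI-un}, whose source $(\e_1-\e_0) - (L_1-L_0) e^{-x}$ is $L^\infty$-bounded on such a compact set. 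Rellich-Kondrachov compactness, a diagonal argument in $N$, and standard convergence then extract a subsequence $u_n \to u$ uniformly on compact subsets of $\overline{\cal U}_T$ and weakly in $W^{1,2}_{p,loc}({\cal U}_T)$.

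Next, I would verify that $u$ is a strong solution of VI \eqref{eq:VI-u} and derive the stated estimates. The initial condition $u(0,x) = 0$ is immediate from $u_n(0,\cdot)\equiv 0$ on $[-n,n]$. The pointwise estimates transfer to the limit, with $L^\infty$ bounds preserved by uniform convergence and derivative bounds preserved by weak $L^p$ convergence in the a.e.\ sense; in particular the lower bound $-L_1 e^{-\beta(\tau-\zeta_1 e^{-n}/L_1)-x}$ on $\partial_\tau u_n$ tends to $-L_1 e^{-\beta\tau - x}$. For the VI itself, on the open set $\{-\zeta_1 < u < \zeta_0\}$ uniform convergence places $u_n$ in the corresponding open set for $n$ large, so the PDE equality passes to the limit a.e.; on the contact sets $\{u = -\zeta_1\}$ and $\{u = \zeta_0\}$ the one-sided inequalities persist under weak convergence of the second-order operator. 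Uniqueness follows from a standard comparison argument for double-obstacle parabolic VIs applied to the difference of two candidate solutions, which can be carried out by adapting the penalty approximation already used in Lemma \ref{lemma for penalty problem}.

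The main obstacle I anticipate is securing the uniform $W^{1,2}_p$ bound in the first step. The cone estimate in Lemma \ref{lemma for VI in bounded domain} does not by itself give a uniform $L^\infty$ bound on $\partial_x u_n$; instead one must invoke $W^{2,p}$-regularity for the double-obstacle problem, and the non-standard features flagged in the introduction—the exponential coefficient $e^{-x}$ in the source, free boundaries emanating from infinity rather than from the terminal boundary, and loss of uniform ellipticity at spatial infinity—force the bound to be obtained on a strictly interior subdomain $\overline{\cal U}^N_T$ rather than globally, with constants depending on $N$. Once these localised bounds are in place, the compactness-and-verification scheme proceeds along classical lines.
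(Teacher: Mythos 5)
Your proposal is correct and follows essentially the same route as the paper: the paper obtains Theorem \ref{th-u} from the penalty estimates of Lemma \ref{lemma for penalty problem} and the bounded-domain solutions of Lemma \ref{lemma for VI in bounded domain} "by the standard argument" (citing \citet{F1,Yi,JKY2023}), which is exactly the exhaustion $n\to\infty$ with interior $W^{1,2}_{p}$ bounds, compactness, passage to the limit in the variational inequality, transfer of the uniform estimates (including the cone estimate on $\overline{\cal U}_T^{n/2}$), and uniqueness by comparison that you spell out. Your added detail on why the bounds must be localized is consistent with, not a deviation from, the paper's argument.
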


\subsection{The Properties of the Free Boundaries}

Thanking to the fact $\partial_x u\geq0$ in Theorem \ref{th-u}, we can define the free boundaries as follows:
\begin{equation}   \label{de:x0x1}
  x_0(\tau)=\sup\{x:u(\tau,x)<\zeta_0\};\quad 
  x_1(\tau)=\sup\{x:u(\tau,x)=-\zeta_1\},\;\;\forall\;\;
  \tau\in(0,T],
\end{equation}
and it is clear that 
\begin{equation}\label{de:domain-tau-x}
\begin{aligned}
 &{\cal RR}_0:=\{(\tau,x)\in{\cal U}_T:u(\tau,x)=\zeta_0\}=\{(\tau,x):x\geq x_0(\tau):0<\tau\leq T\};\\[2mm]
 &{\cal WR}:=\{(\tau,x)\in{\cal U}_T:-\zeta_1<u(\tau,x)<\zeta_0\}=\{(\tau,x):x_1(\tau)<x<x_0(\tau) :0<\tau\leq T\};\\[2mm]
 &{\cal RR}_1:=\{(\tau,x)\in{\cal U}_T:u(\tau,x)=-\zeta_1\}=\{(\tau,x):x\leq x_1(\tau):0<\tau\leq T\}.
\end{aligned}
\end{equation}

In order to study the properties of the free boundaries $x_0(\tau)$ and $x_1(\tau)$, we establish some estimates about $u$ via the following lemmas.
\begin{lem}\label{lem-boundary estimate1} 
The unique strong solution $u$ of VI \eqref{eq:VI-u} has the following estimates 
$$
 u>-\zeta_1\;\;\mbox{in}\;\;(0,T]\times(X_1,+\infty),\qquad 
 u<\zeta_0\;\;\mbox{in}\;\;(0,T]\times(-\infty,X_2)\cup [0,T_1]\times\mathbb{R},
$$
where  $T_1$ is defined in \eqref{de:T1}, and 
\begin{equation}   \label{de:X1X2}
  X_1:=\ln{\frac{L_1-L_0}{\e_1-\e_0+r\zeta_1}},\qquad 
  X_2:=\ln{\frac{L_1-L_0}{\e_1-\e_0-r\zeta_0}}> X_1.
\end{equation}
\end{lem}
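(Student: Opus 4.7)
The plan is to reduce all three inclusions to a single strategy: on the region of interest, construct an auxiliary nonnegative function which is a \emph{strict} parabolic supersolution of $\partial_\tau-{\cal L}_x$, then invoke the parabolic strong minimum principle (applicable because the zeroth-order coefficient of this operator is $+r>0$) to force strict positivity. A preliminary step common to all three parts is to locate the interiors of the coincidence sets: inside $\mathrm{int}({\cal RR}_1)$ the solution is the constant $-\zeta_1$, so all its derivatives vanish a.e., and the VI's inequality in that branch degenerates to $-r\zeta_1\geq(\e_1-\e_0)-(L_1-L_0)e^{-x}$ a.e., forcing $\mathrm{int}({\cal RR}_1)\subseteq\{x\leq X_1\}$. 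The analogous computation in $\mathrm{int}({\cal RR}_0)$, using $(\e_1-\e_0)-r\zeta_0>0$ from Assumption~\ref{as:cost}, yields $\mathrm{int}({\cal RR}_0)\subseteq\{x\geq X_2\}$.

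For the lower estimate I set $v:=u+\zeta_1\geq 0$ and compute $\partial_\tau v-{\cal L}_xv=\partial_\tau u-{\cal L}_xu+r\zeta_1$. Branch-wise this equals $(\e_1-\e_0)+r\zeta_1-(L_1-L_0)e^{-x}$ on ${\cal WR}$ (strictly positive precisely when $x>X_1$), and $r(\zeta_0+\zeta_1)>0$ on ${\cal RR}_0$; the preliminary step already confines ${\cal RR}_1$'s interior to $\{x\leq X_1\}$. Hence $\partial_\tau v-{\cal L}_xv>0$ a.e.\ on $(0,T]\times(X_1,+\infty)$. If $v$ vanished at some interior point $(\tau_0,x_0)$ of a sub-box of this region, the strong minimum principle would force $v\equiv 0$ on the backward parabolic cone through $(\tau_0,x_0)$; on that open set the identity $\partial_\tau v-{\cal L}_xv\equiv 0$ contradicts the strict inequality just established. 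Hence $u>-\zeta_1$ on $(0,T]\times(X_1,+\infty)$.

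The upper estimate on $(0,T]\times(-\infty,X_2)$ follows by the symmetric argument with $w:=\zeta_0-u\geq 0$: on ${\cal WR}$, $\partial_\tau w-{\cal L}_xw=r\zeta_0-(\e_1-\e_0)+(L_1-L_0)e^{-x}$ is strictly positive iff $x<X_2$; on ${\cal RR}_1$ it equals $r(\zeta_0+\zeta_1)>0$; and the preliminary step rules out ${\cal RR}_0$-interior points with $x<X_2$. For the sharper statement $u<\zeta_0$ on $[0,T_1]\times\mathbb{R}$ I use the envelope $\tilde u:=\varphi_+(\tau)-u\geq 0$ supplied by Theorem~\ref{th-u}. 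Since $\varphi_+'(\tau)+r\varphi_+(\tau)=\e_1-\e_0$ on $[0,T_1]$, a branch-by-branch calculation gives $\partial_\tau\tilde u-{\cal L}_x\tilde u=(L_1-L_0)e^{-x}$ on ${\cal WR}$, $=(\e_1-\e_0)-r\zeta_0$ on ${\cal RR}_0$, and $=(\e_1-\e_0)+r\zeta_1$ on ${\cal RR}_1$, each strictly positive. The same strong-minimum-principle argument excludes an interior zero of $\tilde u$ on $(0,T_1]\times\mathbb{R}$, so $u<\varphi_+(\tau)\leq\varphi_+(T_1)=\zeta_0$ there; the initial condition $u(0,\cdot)=0$ disposes of $\tau=0$.

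The main technical obstacle is the measure-theoretic bookkeeping for strong $W^{1,2}_{p,\loc}$ solutions: the branch-wise PDE identities hold a.e.\ inside the open sets ${\cal WR}$, $\mathrm{int}({\cal RR}_0)$, and $\mathrm{int}({\cal RR}_1)$, and fail only on the free boundaries, which are null. Combining these identities with the confinement of the coincidence interiors (the preliminary step) is what promotes them to the \emph{strict} supersolution inequality holding a.e.\ on the full regions of interest; the parabolic strong minimum principle --- valid because $\partial_\tau-{\cal L}_x$ carries zeroth-order coefficient $+r>0$ --- then finishes the argument via the contradiction between $v\equiv 0$ on an open set and $\partial_\tau v-{\cal L}_xv>0$ a.e.\ there.
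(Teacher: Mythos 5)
Your proposal is correct in substance, but it takes a genuinely different route from the paper for the first two estimates. The paper constructs explicit $\varepsilon$-perturbed quadratic barriers (a subsolution $\underline{u}$ anchored at a shifted point $X_1^\varepsilon>X_1$ and, symmetrically, a supersolution $\overline{u}$ near $X_2^\varepsilon<X_2$), checks they are sub/supersolutions of the VI on half-lines, invokes the comparison principle for VIs there, and then lets $\varepsilon\to0^+$. You instead argue directly on $u$: the a.e.\ branch identities of the strong solution, plus the observation that the coincidence sets $\{u=-\zeta_1\}$ and $\{u=\zeta_0\}$ are confined to $\{x\le X_1\}$ and $\{x\ge X_2\}$, make $v=u+\zeta_1$ and $w=\zeta_0-u$ nonnegative supersolutions of $\partial_\tau-{\cal L}_x$ on the relevant strips, and the parabolic strong minimum principle (legitimate here for $W^{1,2}_{p,loc}$ supersolutions with zeroth-order coefficient $r\ge0$, since the paper has all $p\ge1$) rules out interior zeros. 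Your route is shorter and barrier-free; the paper's route is more elementary in that it only uses the comparison principle for VIs rather than the strong maximum principle for strong solutions, and its barriers are reused in spirit in Lemma \ref{lem-boundary estimate2}. For the third estimate both arguments exploit $u\le\varphi_+$ and $\varphi_+'+r\varphi_+=\e_1-\e_0$ on $[0,T_1]$; the paper compares $u$ with $\varphi_+$ only on $[0,T_1]\times[X_2,+\infty)$ (where it first shows $u$ solves the PDE and is smooth) and then covers all $x$ via $\partial_x u\ge0$, whereas your branch-by-branch computation makes $\varphi_+-u$ a strict supersolution on all of $\mathbb{R}$ at once, so you can dispense with the monotonicity step --- a mild simplification, and your use of the strict-inequality contradiction (rather than the initial data, which give $\varphi_+(0)-u(0,\cdot)=0$) is exactly the right choice there.

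One technical point to tighten: you justify the a.e.\ strict supersolution inequality by asserting that the free boundaries are Lebesgue-null, which is not known a priori at this stage. It is also unnecessary: the fact you already invoke (first and second spatial derivatives and the time derivative of a $W^{1,2}_{p,loc}$ function vanish a.e.\ on a level set) yields the branch identities --- and hence the confinements $x\le X_1$, resp.\ $x\ge X_2$ --- a.e.\ on the \emph{full} coincidence sets, not merely on their interiors, so ${\cal RR}_1\cap\{x>X_1\}$ and ${\cal RR}_0\cap\{x<X_2\}$ are automatically null. Alternatively, for the first two estimates strictness is not needed at all: once the minimum principle propagates a zero of $v$ backward onto an open set, that set lies in $\mathrm{int}\{u=-\zeta_1\}\cap\{x>X_1\}$, contradicting your preliminary step (or, even more directly, the initial condition $v(0,\cdot)=\zeta_1>0$). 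With this repair the argument is complete.
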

\begin{proof}Temporarily denote
$$
\underline{u}=(-\zeta_1+\varepsilon^4)-\varepsilon^2(x-X^\varepsilon_1-\varepsilon)^2{\rm\bf I}_{\{x<X^\varepsilon_1+\varepsilon\}},\qquad 
X^\varepsilon_1=\ln{\frac{L_1-L_0}{\e_1-\e_0+r\zeta_1-r\varepsilon^4-\varepsilon}}>X_1
$$
with small enough positive $\varepsilon$. It is not difficult to check that  $\underline{u}\in W^{1,2}_{p,loc}([0,T]\times[X^\varepsilon_1,+\infty))$ with any $p\geq1$, and in the domain $(0,T]\times(X^\varepsilon_1,+\infty)$,  $\underline{u}$ satisfies
\begin{equation*}
    \begin{cases}
        \partial_\tau \underline{u}- {\cal L}_x \underline{u}-(\e_1-\e_0)+(L_1-L_0)e^{-x} =\left[-r(\zeta_1-\varepsilon^4)-(\e_1-\e_0)+(L_1-L_0)e^{-x}\right]
        +\varepsilon^2{\rm\bf I}_{\{x<X^\varepsilon_1+\varepsilon\}}\left[\t^2\right.
        \vspace{2mm}\\
        \qquad\qquad\qquad\left.+(2\b-2r+\t^2)(x-X^\varepsilon_1-\varepsilon)-r(x-X^\varepsilon_1-\varepsilon)^2\right]
        \leq-\varepsilon+\varepsilon^2{\rm\bf I}_{\{x<X^\varepsilon_1+\varepsilon\}}(\t^2+2r\varepsilon)\leq0;
        \vspace{2mm}\\
       \underline{u}(0,x)\leq -\zeta_1+\varepsilon^4\leq 0=u(0,x);\qquad \underline{u}(\tau,X^\varepsilon_1)=-\zeta_1\leq u(\tau,X^\varepsilon_1);\qquad 
       \underline{u}\leq -\zeta_1+\varepsilon^4< \zeta_0
    \end{cases}
\end{equation*}
provided that $\varepsilon$ is small enough. Applying the comparison principle for VIs to VIs of $\underline{u}$ and $u$ in the domain $(0,T]\times(X^\varepsilon_1,+\infty)$, we know that 
$$
 u \geq \underline{u}\;\mbox{in}\;[0,T]\times[X^\varepsilon_1,+\infty)\;\;
 \mbox{and}\;\;u\geq \underline{u}=-\zeta_1+\varepsilon^4>-\zeta_1\;\mbox{in}\;[0,T]\times[X^\varepsilon_1+\varepsilon,+\infty). 
$$
Thanking to the fact that $X^\varepsilon_1+\varepsilon\rightarrow X_1$ as $\varepsilon$ tends to $0^+$, we deduce that 
$$
  u>-\zeta_1\;\mbox{in}\;(0,T]\times(X_1,+\infty). 
$$

On the other hand, if we temporarily denote
$$
\overline{u}=(\zeta_0-\varepsilon^4)+\varepsilon^2(x-X^\varepsilon_2+\varepsilon)^2{\rm\bf I}_{\{x>X^\varepsilon_2-\varepsilon\}},\qquad 
X^\varepsilon_2=\ln{\frac{L_1-L_0}{\e_1-\e_0-r\zeta_0+r\varepsilon^4+\varepsilon}}<X_2
$$
with small enough positive $\varepsilon$, then we can check that  $\overline{u}\in W^{1,2}_{p,loc}([0,T]\times(-\infty,X^\varepsilon_2])$ with any $p\geq1$, and in the domain $(0,T]\times(-\infty,X^\varepsilon_2)$,  $\overline{u}$ satisfies
\begin{equation*}
    \begin{cases}
        \partial_\tau \overline{u}- {\cal L}_x \overline{u}-(\e_1-\e_0)+(L_1-L_0)e^{-x} =\left[r(\zeta_0-\varepsilon^4)-(\e_1-\e_0)+(L_1-L_0)e^{-x}\right]
        -\varepsilon^2{\rm\bf I}_{\{x>X^\varepsilon_2-\varepsilon\}}\left[\t^2\right.
        \vspace{2mm}\\
        \qquad\qquad\qquad\left.+(2\b-2r+\t^2)(x-X^\varepsilon_2+\varepsilon)-r(x-X^\varepsilon_2+\varepsilon)^2\right]
        \geq\varepsilon-\varepsilon^2{\rm\bf I}_{\{x>X^\varepsilon_2-\varepsilon\}}(\t^2+2\b\varepsilon+\t^2\varepsilon)\geq0;
        \vspace{2mm}\\
       \overline{u}(0,x)\geq \zeta_0-\varepsilon^4\geq 0=u(0,x);\qquad \overline{u}(\tau,X^\varepsilon_2)=\zeta_0\geq u(\tau,X^\varepsilon_2);\qquad 
       \overline{u}\geq \zeta_0-\varepsilon^4>-\zeta_1
    \end{cases}
\end{equation*}
provided that $\varepsilon$ is small enough. Applying the comparison principle for VIs to VIs for $\overline{u}$ and $u$ in the domain $(0,T]\times(-\infty,X^\varepsilon_2)$, we know that 
$$
 u \leq \overline{u}\;\mbox{in}\;[0,T]\times(-\infty,X^\varepsilon_2]\;\;
 \mbox{and}\;\;u\leq \overline{u}=\zeta_0-\varepsilon^4<\zeta_0\;\mbox{in}\;(0,T]\times(-\infty,X^\varepsilon_2-\varepsilon]. 
$$
Thanking to the fact that $X^\varepsilon_2-\varepsilon\rightarrow X_2$ as $\varepsilon$ tends to $0^+$, we deduce that 
$$
  u<\zeta_0\;\mbox{in}\;(0,T]\times(-\infty,X_2). 
$$

Moreover, 
according to Theorem \ref{th-u}, we know that
$$
 u\leq \varphi_+\;\mbox{in}\;[0,T_1]\times\mathbb{R}\quad
 \mbox{and}\quad u\leq \varphi_+< \varphi_+(T_1)=\zeta_0\;\mbox{in}\;[0,T_1)\times\mathbb{R}.
$$
So, in the domain $(0,T_1)\times(X_1,+\infty)$, $u$ satisfies $-\zeta_1<u<\zeta_0$ and PDE
$$
 \partial_\tau u- {\cal L}_x u-(\e_1-\e_0)+(L_1-L_0)e^{-x}=0,\;\;u(0,x)=0.
$$
Then the theory for PDEs implies that $u\in C^\infty([0,T_1]\times[X_2,+\infty))$. Moreover, in the domain $[0,T_1]\times[X_2,+\infty)$, $\,\varphi_+\in C^\infty([0,T_1]\times[X_2,+\infty))$, and satisfies 
\begin{equation*}
    \begin{cases}
        \partial_\tau \varphi_+- {\cal L}_x \varphi_+-(\e_1-\e_0)+(L_1-L_0)e^{-x}=(L_1-L_0)e^{-x}>0
       =\partial_\tau u- {\cal L}_x u-(\e_1-\e_0)+(L_1-L_0)e^{-x};
        \vspace{2mm}\\
       \varphi_+(0,x)=u(0,x);\quad 
       \varphi_+(\tau,X_2)\geq u(\tau,X_2).
    \end{cases}
\end{equation*}
Applying the strong maximum principle for PDEs, we deduce that 
$$
 u<\varphi_+\;\;\mbox{in}\;\;(0,T_1]\times(X_2,+\infty),\qquad 
 u<\varphi_+(T_1,x)=\zeta_0\;\;\mbox{in}\;\;[0,T_1]\times(X_2,+\infty).
$$
Combining $\partial_x u\geq 0$, we know that 
$$
 u<\zeta_0\;\;\mbox{in}\;\;[0,T_1]\times \mathbb{R}.
$$
\end{proof}

\begin{lem}\label{lem-boundary estimate2}
For any $\varepsilon\in(0,T-T_1)$, there exists a constant $K_\varepsilon>0$ such that 
 $$u=-\zeta_1\;\mbox{in}\;[\varepsilon,T]\times(-\infty,-K_\varepsilon],\quad  
 u=\zeta_0\;\mbox{in}\;[T_1+\varepsilon,T]\times[K_\varepsilon,+\infty).$$
\end{lem}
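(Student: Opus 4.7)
The plan is to prove both claims by constructing explicit super- and sub-solutions of VI \eqref{eq:VI-u} and invoking the comparison principle for double obstacle problems, in the spirit of the barriers used in Lemma \ref{lem-boundary estimate1}.

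For the first claim, since $u \geq -\z_1$ holds automatically by Theorem \ref{th-u}, it suffices to exhibit a super-solution $\bar u$ of VI \eqref{eq:VI-u} with $\bar u \equiv -\z_1$ on $[\varepsilon, T]\times(-\infty,-K_\varepsilon]$ and $\bar u \geq u$ on the parabolic boundary of a suitable subdomain $[0,T]\times(-\infty,x^*]$, where the right-edge datum is controlled via $u \leq \varphi_+$ from Theorem \ref{th-u}. The key algebraic observation is that at a point where $\bar u = -\z_1$, the VI super-solution inequality reduces to
\[
   (L_1-L_0)e^{-x} \;\geq\; (\e_1-\e_0) + r\z_1,
\]
which holds precisely on $x \leq X_1$; so I require $K_\varepsilon \geq \max(-X_1,0)$. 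A natural candidate is
\[
   \bar u(\tau,x) \;=\; \max\bigl\{-\z_1,\; \varphi_+(\tau) - h(\tau,x)\bigr\},
\]
with $h\geq0$ an appropriate sub-solution of the flipped linear PDE such that $h(\tau, x^*) = 0$ and $h(0,\cdot) = 0$; this guarantees $\bar u \geq \varphi_+$ at $x = x^*$ and $\bar u(0,\cdot) = 0 = u(0,\cdot)$. An explicit choice such as $h(\tau,x) = C^{-1}\tau (L_1-L_0)(e^{-x} - e^{-x^*})^+$, with $C \geq 1 + \b T$ and $x^*$ chosen so that the super-solution inequality holds for $\tau > T_1$ as well, makes $\bar u = -\z_1$ on the set $\{C^{-1}\tau(L_1-L_0)(e^{-x}-e^{-x^*}) \geq \varphi_+(\tau)+\z_1\}$, which covers $[\varepsilon,T]\times(-\infty,-K_\varepsilon]$ with $K_\varepsilon$ of order $\ln(1/\varepsilon)$. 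The strongly positive forcing $(L_1-L_0)e^{-x}$ for very negative $x$ is exactly what enables this construction. Comparison then yields $u \leq \bar u$, whence $u \equiv -\z_1$ on the target region.

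For the second claim I argue symmetrically by constructing a sub-solution $\underline u \leq u$ with $\underline u \equiv \z_0$ on $[T_1+\varepsilon,T]\times[K_\varepsilon,\infty)$. The time offset $T_1+\varepsilon$ is necessary because Lemma \ref{lem-boundary estimate1} already excludes $u = \z_0$ on $[0,T_1]\times\mathbb{R}$; for $\tau > T_1$ one has $\varphi_+(\tau) = \z_0$, and the PDE sub-solution condition at $\underline u = \z_0$ reduces to $(L_1-L_0)e^{-x} \leq (\e_1-\e_0) - r\z_0$, valid on $x \geq X_2$ by Assumption \ref{as:cost}. A mirror-image ansatz of the form $\underline u(\tau,x) = \min\bigl\{\z_0,\,-\z_1 + g(\tau,x)\bigr\}$ with $g$ chosen analogously (vanishing at $\tau=0$ and at some left edge $x=x_*$, and growing for large $x$) supplies $\underline u$, and comparison yields $u \geq \underline u = \z_0$ on the target region.

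The principal technical obstacle is verifying the super-/sub-solution property across the ``seams'' of the piecewise $\max$/$\min$ ansatz, where the first derivatives jump. As is standard, this can be handled either by mollifying the barriers and passing to the limit, or more cleanly by first working with the penalized problems \eqref{eq:PDE-unep}---whose solutions are classical and amenable to the barrier-and-comparison arguments of Lemma \ref{lemma for penalty problem}---and then letting $\varepsilon_{\mathrm{pen}} \to 0$ and $n \to \infty$ via the convergence already established earlier in this section.
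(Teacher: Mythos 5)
Your overall plan coincides with the paper's: both claims are proved by explicit barriers plus the comparison principle, using for the first claim that the forcing $(L_1-L_0)e^{-x}$ becomes huge as $x\to-\infty$, and for the second claim a time profile of $\varphi_+$-type that reaches $\zeta_0$ only after (roughly) $T_1$, together with the sign condition $(L_1-L_0)e^{-x}\le \epsilon_1-\epsilon_0-r\zeta_0$ for $x\ge X_2$. However, there is a genuine gap in the way you glue your barriers. You take $\bar u=\max\{-\zeta_1,\varphi_+-h\}$ as a supersolution and $\underline u=\min\{\zeta_0,-\zeta_1+g\}$ as a subsolution. These are exactly the wrong lattice operations: the minimum of supersolutions is a supersolution and the maximum of subsolutions is a subsolution, but a \emph{maximum} of supersolutions (resp.\ a \emph{minimum} of subsolutions) is not. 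Concretely, at the seam $\{\varphi_+-h=-\zeta_1\}$ your $\bar u$ has a convex kink (the $x$-derivative jumps upward from $0$ to $-\partial_x h>0$), so $\partial_{xx}\bar u$ carries a \emph{positive} singular measure on the seam and the supersolution inequality $\partial_\tau\bar u-\mathcal{L}_x\bar u-(\epsilon_1-\epsilon_0)+(L_1-L_0)e^{-x}\ge 0$ fails there (in the distributional, viscosity, or $W^{1,2}_p$ sense — the glued function is not even in $W^{1,2}_{p,\mathrm{loc}}$). The symmetric problem occurs for $\underline u$: the min produces a concave kink, i.e.\ a positive singular contribution to $-\tfrac{\theta^2}{2}\partial_{xx}\underline u$, violating the subsolution inequality. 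This is not a removable technicality: an ``a.e.\ supersolution'' with a wrong-signed kink does not obey the comparison principle (compare $|x|$ with the harmonic function $1$ on $(-1,1)$), and your two proposed fixes do not repair it. Mollifying the barrier produces, near the seam, $|\partial_{xx}|$ of order (derivative jump)/(mollification scale) with nothing to absorb it — at the seam $x\approx -K_\varepsilon$ the forcing $(L_1-L_0)e^{-x}$ is of the same order as the jump itself, not larger by a factor $1/\delta$ — and passing through the penalized problems \eqref{eq:PDE-unep} changes nothing, since you would still need the glued barrier to be a super-/sub-solution of the penalized equation, which fails across the same seam for the same reason.

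The repair is precisely what the paper does: glue additively with corrections that vanish to first order at the junction, so the barrier is $C^1$ in $x$ with bounded $\partial_{xx}$ and is a legitimate strong super-/sub-solution. For the first claim the paper takes $\overline u(\tau,x)=-\bigl(\tfrac{\zeta_1\tau}{\varepsilon}{\rm\bf I}_{\{\tau\le\varepsilon\}}+\zeta_1{\rm\bf I}_{\{\tau>\varepsilon\}}\bigr)+\bigl(x+\tfrac{2}{\varepsilon}\bigr)^2{\rm\bf I}_{\{x\ge -2/\varepsilon\}}$ on $[0,T]\times(-\infty,-1/\varepsilon]$, where the quadratic (which is $C^1$ at $x=-2/\varepsilon$) makes $\overline u\ge\zeta_0\ge u$ at the right edge, and the huge forcing $(L_1-L_0)e^{-x}\ge (L_1-L_0)e^{1/\varepsilon}$ absorbs all the terms the quadratic and the time ramp create; this yields $u=-\zeta_1$ on $[\varepsilon,T]\times(-\infty,-2/\varepsilon]$. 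For the second claim it takes $\underline u=\bigl[\tfrac{1}{r}(1-\varepsilon)(\epsilon_1-\epsilon_0)(1-e^{-r\tau}){\rm\bf I}_{\{\tau\le T_1^\varepsilon\}}+\zeta_0{\rm\bf I}_{\{\tau>T_1^\varepsilon\}}\bigr]-\varepsilon^5\bigl(x-\tfrac{2}{\varepsilon^3}\bigr)^2{\rm\bf I}_{\{x\le 2/\varepsilon^3\}}$ on $[0,T]\times[1/\varepsilon^3,+\infty)$, where the $(1-\varepsilon)$ discount creates the slack that absorbs the small quadratic and pushes the hitting time to $T_1^\varepsilon>T_1$ (whence the $T_1+\varepsilon$ in the statement), and the left-edge value $\le \zeta_0-1/\varepsilon<-\zeta_1\le u$ handles the lateral boundary. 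If you replace your $\max$/$\min$ ansatz by such $C^1$ additive constructions (or force your $h$, $g$ to fit tangentially at the seams), your argument becomes essentially the paper's proof; as written, the comparison step is not justified.
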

\begin{proof}

Temporarily denote
$$
 \overline{u}(\tau,x)=-\left({\zeta_1\tau\over \varepsilon}{\rm\bf I}_{\{\tau\leq \varepsilon\}}+\zeta_1{\rm\bf I}_{\{\tau> \varepsilon\}}\right)
 +\left(x+{2\over \varepsilon}\right)^2{\rm\bf I}_{\{x\geq -2/\varepsilon\}},\;\;\forall\;\;
 (\tau,x)\in [0,T]\times(-\infty,-1/\varepsilon].
$$
Then it is not difficult to check that   $\overline{u}\in W^{1,2}_{p,loc}([0,T]\times(-\infty,-1/\varepsilon])$ with any $p\geq1$, and in the domain $[0,T]\times(-\infty,-1/\varepsilon]$, $\overline{u}$ satisfies
\begin{equation*}
    \begin{cases}
        \partial_\tau \overline{u}- {\cal L}_x \overline{u}-(\e_1-\e_0)+(L_1-L_0)e^{-x}
        =\left[-{\zeta_1\over \varepsilon}(1+r\tau){\rm\bf I}_{\{\tau\leq \varepsilon\}}-r\zeta_1{\rm\bf I}_{\{\tau> \varepsilon\}}-(\e_1-\e_0)+(L_1-L_0)e^{-x}\right]
        \vspace{2mm}\\
        \hspace{6.4cm}-\left[\t^2+(2\b-2r+\t^2)\left(x+{2\over \varepsilon}\right)-r\left(x+{2\over \varepsilon}\right)^2\right]{\rm\bf I}_{\{x\geq -2/\varepsilon\}}        
        \vspace{2mm}\\
        \hspace{6cm}\geq\frac{L_1-L_0}{2}e^{1/\varepsilon}-\left(\t^2+{2\b+\t^2\over \varepsilon}\right){\rm\bf I}_{\{x\geq -2/\varepsilon\}}\geq0;
        \vspace{2mm}\\
       \overline{u}(0,x)\geq 0=u(0,x);\quad 
       \overline{u}(\tau,-1/\varepsilon)\geq-\zeta_1+{1\over \varepsilon^2}\geq\zeta_0\geq u(\tau,-1/\varepsilon);\quad 
       \overline{u}\geq \overline{u}(\varepsilon,-2/\varepsilon)=-\zeta_1
    \end{cases}
\end{equation*}
provided that $\varepsilon$ is small enough. Applying the comparison principle for VIs to VIs for $\overline{u}$ and $u$ in the domain $(0,T]\times(-\infty,-1/\varepsilon)$, we deduce that 
$$
 \overline{u}\geq u\;\mbox{in}\;[0,T]\times(-\infty,-1/\varepsilon]\quad 
 \mbox{and}\quad u(\tau,x)=\overline{u}(\tau,x)=-\zeta_1,\;
 \forall\;(\tau,x)\in [\varepsilon,T]\times(-\infty,-2/\varepsilon],
$$
which implies that for any $\varepsilon\in(0,T)$, there exists a constant $K_\varepsilon>0$ such that 
 $$u=-\zeta_1\;\mbox{in}\;[\varepsilon,T]\times(-\infty,-K_\varepsilon].$$

Temporarily denote
$$
 \underline{u}=\left[{1\over r}(1-\varepsilon)(\e_1-\e_0)\left(1-e^{-r\tau}\right){\rm\bf I}_{\{\tau\leq{T}^\varepsilon_1\}}+\zeta_0{\rm\bf I}_{\{\tau>{T}^\varepsilon_1\}}\right]-\varepsilon^5\left(x-{2\over \varepsilon^3}\right)^2{\rm\bf I}_{\{x\leq 2/\varepsilon^3\}}\;\;\mbox{in}\;\;[0,T]\times[1/\varepsilon^3,+\infty),
$$
where $\varepsilon$ is a positive constant small enough, and 
$$
 {T}^\varepsilon_1=-{1\over r}\ln\left[1-{r\zeta_0\over (1-\varepsilon)(\e_1-\e_0)}\right]>T_1.
$$
Then it is not difficult to check that $\underline{u}\in W^{1,2}_{p,loc}([0,T]\times[1/\varepsilon^3,+\infty))$ with any $p\geq1$, and in the domain $[0,T]\times[1/\varepsilon^3,+\infty)$, $\underline{u}$ satisfies
$$
 \partial_\tau\underline{u}\geq0;\qquad 
 \underline{u}(\tau,x)\leq \underline{u}({T}^\varepsilon_1,2/\varepsilon^3)=\zeta_0;
 \qquad 
 \underline{u}(0,x)\leq 0;\qquad 
 \underline{u}(\tau,1/\varepsilon^3)\leq \zeta_0-1/\varepsilon<-\zeta_1,
$$
and
\begin{eqnarray*}
    &&\partial_\tau \underline{u} - {\cal L}_x\underline{u} -(\e_1-\e_0)+(L_1-L_0)e^{-x}
    \\[2mm]
    &=&\left[-\varepsilon(\e_1-\e_0){\rm\bf I}_{\{\tau\leq{T}^\varepsilon_1\}}
    +(r\zeta_0-\e_1+\e_0){\rm\bf I}_{\{\tau>{T}^\varepsilon_1\}}
    +(L_1-L_0)e^{-x}\right]
    +\varepsilon^5
    \left[\t^2+\left(2\beta -2r+\t^2\right)\left(x-{2\over \varepsilon^3}\right)\right.
    \\[2mm]
    &&\left.-r\left(x-{2\over \varepsilon^2}\right)^3\right]{\rm\bf I}_{\{x\leq 2/\varepsilon^3\}}
    \\[2mm]
    &\leq&-{\varepsilon\over2}(\e_1-\e_0)+\varepsilon^5
    \left[\t^2-2r\left(x-{2\over \varepsilon^3}\right)\right]{\rm\bf I}_{\{x\leq 2/\varepsilon^3\}}
    \leq -{\varepsilon\over2}(\e_1-\e_0)
    +\varepsilon^5\left(\t^2+\frac{2r}{\varepsilon^3}\right)\leq0
\end{eqnarray*}
provided that $\varepsilon$ is small enough, where we have used the fact that $r>0$ and $r\zeta_0-\e_1+\e_0<0$. Applying the comparison principle for VIs to VIs for $\underline{u}$ and $u$ in the domain $[0,{T}]\times[1/\varepsilon^3,+\infty)$, we deduce that 
$$
 u\geq\underline{u}\;\mbox{in}\;[0,T]\times[1/\varepsilon^3,+\infty)
 \quad 
 \mbox{and}\quad u=\underline{u}=\zeta_0\;\mbox{in}\;[{T}^\varepsilon_1,T]\times[2/\varepsilon^3,+\infty).
$$ 
Note that ${T}^\varepsilon_1\rightarrow T_1$ as $\varepsilon$ tends to $0^+$, then we know that  for any $\varepsilon>0$, there exists a $K_\varepsilon>0$ such that 
 $$u=\zeta_0\;\mbox{in}\;[T_1+\varepsilon,T]\times[K_\varepsilon,+\infty).$$
\end{proof}

\begin{figure}[ht]
	\centering
	\subfigure{\includegraphics[scale=0.8]{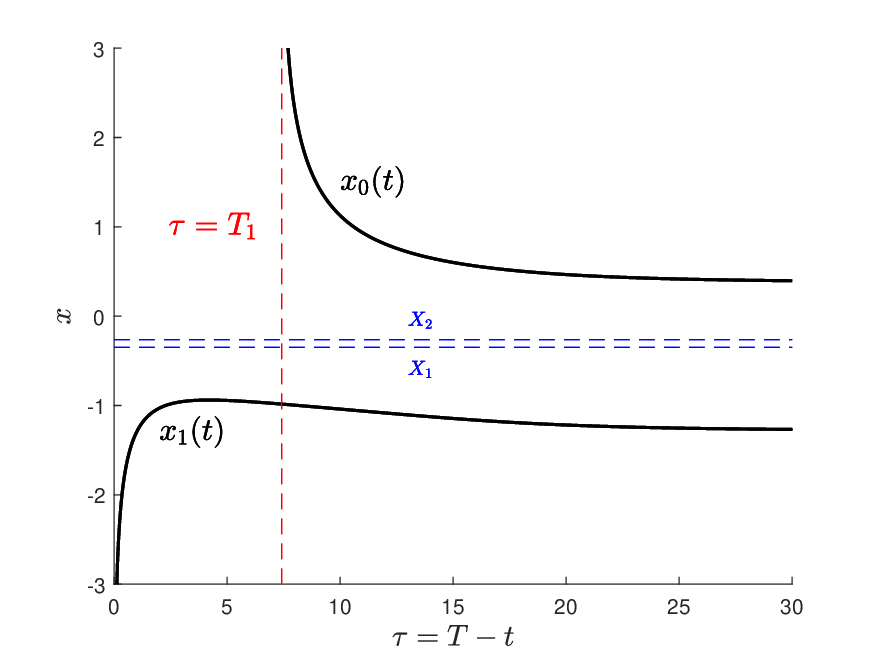}}
	\caption{The two boundaries $x_0(t)$ and $x_1(t)$ in $(\tau,x)$-domain.\label{free boundary in tau-x}}
\end{figure}

\begin{theorem}\label{theorem for free boundaires}
The free boundaries $x_1(\cdot)\in C^\infty(0,T]$ and $x_0(\cdot)\in C^\infty(T_1,T]$, and $u\in C^\infty(\overline{{\cal RR}_1})\cap  C^\infty(\overline{{\cal WR}})\cap  C^\infty(\overline{{\cal RR}_0}),\partial_x u,\partial_\tau u\in C(\overline{\cal U}_T)$. Moreover, 
$$
 -\infty<x_1(\cdot)<X_1,\qquad x_0(\cdot)>X_2,\qquad x_0(\tau)=+\infty,\;\forall\;\tau\in[0,T_1],\qquad 
 x_0(\tau)<+\infty,\;\forall\;\tau\in(T_1,T],
$$
$$
 \lim\limits_{\tau\rightarrow0^+}x_1(\tau)=-\infty,\qquad 
 \lim\limits_{\tau\rightarrow T_1^+}x_0(\tau)=+\infty,
$$
$$
 \lim\limits_{x\rightarrow (x_1(\tau))^+}(\partial_{xx} u+\partial_{x} u)(\tau,x)>0,\;\forall\;\tau\in(0,T],\quad
 \lim\limits_{x\rightarrow (x_0(\tau))^-}(\partial_{xx} u+\partial_{x} u)(\tau,x)<0,\;\forall\;\tau\in(T_1,T],
$$
where $T_1$ is defined in \eqref{de:T1}, and $X_1,X_2$ are defined in \eqref{de:X1X2} (see Figure \ref{free boundary in tau-x}).
\end{theorem}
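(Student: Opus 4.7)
The plan is to establish the location bounds and the structure of the coincidence regions from the barrier estimates in Lemmas~\ref{lem-boundary estimate1} and~\ref{lem-boundary estimate2}, to obtain the endpoint asymptotics at $\tau=0$ and $\tau=T_1^+$ via dedicated sub/supersolution arguments, to derive the smoothness of $u$ in each region by standard parabolic theory, and finally to prove smoothness of the (non-monotone) free boundaries via the twisting device indicated in the introduction. The non-degeneracy conclusions will then follow from a direct computation on the boundary using the PDE.

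First, since $\partial_x u\ge 0$ by Theorem~\ref{th-u}, the curves $x_0,x_1$ in \eqref{de:x0x1} are well-defined and the three regions $\mathcal{RR}_1,\mathcal{WR},\mathcal{RR}_0$ are as in \eqref{de:domain-tau-x}. Lemma~\ref{lem-boundary estimate1} gives $u>-\zeta_1$ on $(0,T]\times(X_1,+\infty)$, $u<\zeta_0$ on $(0,T]\times(-\infty,X_2)$, and $u<\zeta_0$ on $[0,T_1]\times\mathbb{R}$; these translate to $x_1(\tau)<X_1$, $x_0(\tau)>X_2$, and $x_0(\tau)=+\infty$ on $[0,T_1]$. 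Lemma~\ref{lem-boundary estimate2} supplies the complementary finiteness $x_1(\tau)\ge -K_\varepsilon$ on $[\varepsilon,T]$ and $x_0(\tau)\le K_\varepsilon$ on $[T_1+\varepsilon,T]$, hence $x_0(\tau)<+\infty$ for $\tau>T_1$. For the endpoint limits, I would fix a large threshold $N$ and build local comparisons. To force $x_1(\tau)<-N$ near $\tau=0^+$, I would use the ansatz $\underline{u}(\tau,x)=-C\tau$ on $[0,T]\times[-N,+\infty)$; choosing $C>(\e_1-\e_0)+(L_1-L_0)e^{N}$ makes $\underline{u}$ a subsolution of VI~\eqref{eq:VI-u} there, and since $\underline{u}(0,\cdot)=0=u(0,\cdot)$, comparison gives $u\ge \underline{u}>-\zeta_1$ for $\tau<\zeta_1/C$. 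Symmetrically, $u(T_1,\cdot)<\zeta_0$ on any fixed $[-N,N]$ together with continuity of $u$ on $\overline{\mathcal{U}}_T$ and $\partial_x u\ge 0$ yields $x_0(\tau)>N$ just past $T_1$, whence $x_0(\tau)\to+\infty$ as $\tau\to T_1^+$.

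For the regularity of $u$ itself, inside each coincidence set $u$ is constant and hence $C^\infty$; inside $\mathcal{WR}$ the equation $\partial_\tau u-\mathcal{L}_x u=(\e_1-\e_0)-(L_1-L_0)e^{-x}$ has smooth coefficients and smooth source, so interior Schauder gives $u\in C^\infty(\mathcal{WR})$, and the one-sided $C^\infty$ regularity up to each free boundary follows by a standard bootstrap using the constant boundary values on the coincidence side. Continuity of $\partial_x u$ and $\partial_\tau u$ across each free boundary follows from the $W^{1,2}_{p,loc}$ regularity of Theorem~\ref{th-u} combined with the fact that both derivatives vanish on the coincidence side. The main obstacle is the smoothness of $x_0(\cdot)$ and $x_1(\cdot)$ themselves, because these curves are not monotone in $\tau$, so the classical Friedman/Caffarelli argument does not apply directly. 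Following the strategy advertised in the introduction, I would (i) use the cone estimate $-C\partial_x u\le \tau\partial_\tau u\le C e^{x}\partial_x u$ of Theorem~\ref{th-u} to bound the time-derivative of $u-(\pm\zeta_i)$ locally by its spatial derivative near each boundary, and (ii) introduce a twisting change of variables $\tilde x=x+\Phi(\tau)$, with $\Phi$ built from the known local bounds on $x_i(\tau)$, chosen so that the transformed free boundary is monotone in $\tau$. In the twisted coordinates the VI retains smooth coefficients, the cone estimate persists, and the classical free-boundary regularity theory applies, producing $\tilde x_i\in C^\infty$ and hence $x_i\in C^\infty$ after inverting the transform. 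This twisting step is the most delicate part of the argument.

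Finally, the non-degeneracy statements follow from evaluating the PDE on the free boundaries. At $x_1(\tau)$, continuity of $\partial_x u,\partial_\tau u$ together with $u\equiv-\zeta_1$ on $\mathcal{RR}_1$ gives $\partial_x u=\partial_\tau u=0$ on the boundary; taking the one-sided limit in $\partial_\tau u-\mathcal{L}_x u=(\e_1-\e_0)-(L_1-L_0)e^{-x}$ yields
\[
\tfrac{\theta^2}{2}\lim_{x\to x_1(\tau)^+}\partial_{xx}u=(L_1-L_0)e^{-x_1(\tau)}-(\e_1-\e_0)-r\zeta_1>0,
\]
where strict positivity uses $x_1(\tau)<X_1$. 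Adding the vanishing $\partial_x u$ delivers $\lim_{x\to x_1(\tau)^+}(\partial_{xx}u+\partial_x u)>0$. The analogous computation at $x_0(\tau)$, using $u=\zeta_0$ and $x_0(\tau)>X_2$, gives the opposite sign and completes the proof.
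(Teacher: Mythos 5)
Your route is essentially the paper's: barrier lemmas for the location of the boundaries, continuity/comparison arguments for the endpoint asymptotics $\lim_{\tau\to0^+}x_1=-\infty$ and $\lim_{\tau\to T_1^+}x_0=+\infty$, the cone estimate of Theorem~\ref{th-u} plus a linear-in-time change of variables to render the free boundary monotone and invoke classical regularity, and evaluation of the PDE on the boundary for the non-degeneracy. However, there is one genuine gap: you assert that Lemma~\ref{lem-boundary estimate1} ``translates to'' $x_1(\tau)<X_1$ and $x_0(\tau)>X_2$. It does not. From $u>-\zeta_1$ on $(0,T]\times(X_1,+\infty)$ you only get $x_1(\tau)\le X_1$, and the equality case $u(\tau_0,X_1)=-\zeta_1$ (hence $x_1(\tau_0)=X_1$) is not excluded; similarly you only get $x_0(\tau)\ge X_2$. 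The strictness is not cosmetic: it is part of the theorem's statement, and your final computation gives $\frac{\theta^2}{2}\lim_{x\to x_1(\tau)^+}\partial_{xx}u=(L_1-L_0)e^{-x_1(\tau)}-(\epsilon_1-\epsilon_0+r\zeta_1)$, which is merely $\ge0$ under $x_1(\tau)\le X_1$; strict positivity (and likewise the strict negativity at $x_0$) requires $x_1(\tau)<X_1$ and $x_0(\tau)>X_2$. The paper closes this by a separate argument: in the strip $(0,T]\times(X_1,X_2)$, where $u$ solves the PDE, compare with the constant obstacle via the strong maximum principle to get $u>-\zeta_1$ inside, and then Hopf's lemma at a putative contact point $(\tau_0,X_1)$ forces $\partial_x u(\tau_0,X_1)>0$, contradicting $\partial_x u=0$ on the free boundary (which follows from $\partial_x u\in C(\overline{\cal U}_T)$ by Sobolev embedding of $W^{1,2}_{p,loc}$ and $\partial_x u\equiv0$ in the coincidence set). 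You need to supply this step, or an equivalent one.

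Two smaller repairs. First, continuity of $\partial_\tau u$ on $\overline{\cal U}_T$ does not follow from $W^{1,2}_{p,loc}$ regularity (that only gives $\partial_\tau u\in L^p_{loc}$); in the paper it comes from the cone estimate $-C_\varepsilon\partial_x u\le\partial_\tau u\le C_\varepsilon e^{x}\partial_x u$ combined with the continuity and vanishing of $\partial_x u$ at the free boundaries — the same estimate that should dictate your twist: the shift speed is the cone constant $C_\varepsilon$ (resp.\ $C_\varepsilon e^{K_\varepsilon}$), with the local bounds on $x_i(\tau)$ only ensuring the boundary stays in the region where the estimate is used, rather than the shift being ``built from local bounds on $x_i$''. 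Second, your barrier $\underline{u}=-C\tau$ on $[0,T]\times[-N,+\infty)$ also needs the lateral comparison at $x=-N$, which your choice of $C$ does not guarantee; either take $C\ge L_1e^{N}$ and use $\partial_\tau u\ge -L_1e^{-\beta\tau-x}$ from Theorem~\ref{th-u}, or note that this same estimate already yields $u(\tau,x)\ge -L_1e^{-x}\tau$, hence $x_1(\tau)\le\ln(L_1\tau/\zeta_1)\to-\infty$ directly. Finally, the one-sided $C^\infty$ regularity of $u$ up to the free boundaries should be concluded after, not before, the smoothness of $x_0(\cdot),x_1(\cdot)$, as it is a boundary-regularity statement for those curves.
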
  
\begin{proof} We divide the proof into 4 steps.\\
\noindent {\bf Step 1.} According to Lemma \ref{lem-boundary estimate1}, we deduce that 
$$
 x_1(\tau)\leq X_1,\;\forall\;\tau\in[0,T],\qquad 
 x_0(\tau)=+\infty,\;\forall\;\tau\in[0,T_1],\qquad 
 x_0(\tau)\geq X_2,\;\forall\;\tau\in[0,T].
$$

Next, we prove that $x_1(\tau)<X_1$ for any $\tau\in(0,T]$. Otherwise, there exists a $\tau_0\in(0,T]$ such that $x_1(\tau_0)=X_1$ and $u(\tau_0,X_1)=-\zeta_1$. Since $x_1(\tau)\leq X_1<X_2\leq x_0(\tau)$, we know that in the domain $[0,T]\times (X_1,X_2)$, $u\in C^\infty((0,T]\times (X_1,X_2))$ satisfies the following PDEs
$$
 \partial_\tau u-{\cal L}_xu=(\e_1-\e_0)-(L_1-L_0)e^{-x}>(\e_1-\e_0)-(L_1-L_0)e^{-X_1}
 =\partial_\tau (-\zeta_1)-{\cal L}_x (-\zeta_1).
$$
So, the strong principle for PDEs implies that $u>-\zeta_1$ in $(0,T]\times (X_1,X_2)$. Moreover, the Hopf's lemma (see \cite{Evans}) implies that 
$$\partial_x u(\tau_0,x_1(\tau_0))=\partial_x u(\tau_0,X_1)>0.$$
On the other hand, since $u\in W^{1,2}_{p,loc}({\cal U}_T)$ for any $p\geq1$, the embedding theorem implies that $\partial_x u\in C(\overline{\cal U}_T)$ and 
\begin{eqnarray} \label{partial-x-u-x1x0} 
\partial_xu (\tau,x_1(\tau))=\lim\limits_{x\rightarrow(x_1(\tau))^-}\partial_x u(\tau,x)=\partial_x(-\zeta_1)=0,\,\forall\,\tau\in(0,T];\quad 
 \partial_xu (\tau,x_0(\tau))=0,\,\forall\,\tau\in(T_1,T].
\end{eqnarray}
Hence, we have obtain a contradiction, and proved that $x_1(\tau)<X_1$ for any $\tau\in(0,T]$. 

Now, we prove that $x_0(\tau)>X_2$ for any $\tau\in(0,T]$. Otherwise, there exists a $\tau_0\in(0,T]$ such that $x_0(\tau_0)=X_2$ and $u(\tau_0,X_2)=\zeta_0$. As above, we know that in the domain $[0,T]\times (X_1,X_2)$, $u\in C^\infty((0,T]\times (X_1,X_2))$ satisfies the following PDEs
$$
 \partial_\tau u-{\cal L}_xu=(\e_1-\e_0)-(L_1-L_0)e^{-x}<(\e_1-\e_0)-(L_1-L_0)e^{-X_2}
 =\partial_\tau \zeta_0-{\cal L}_x \zeta_0.
$$
So, the strong principle for PDEs implies that $u<\zeta_0$ in $(0,T]\times (X_1,X_2)$, and the Hopf's lemma implies that $\partial_x u(\tau_0,X_2)>0$, which contradicts with the second equality in  \eqref{partial-x-u-x1x0}. Hence, we have proved that $x_0(\tau)> X_2$ for any $\tau\in(0,T]$. 

\noindent {\bf Step 2.} By  the first equality and second  equality in Lemma \ref{lem-boundary estimate2}, we derive that 
$$
 x_1(\tau)>-\infty,\;\forall\;\tau\in(0,T],\qquad 
 x_0(\tau)<+\infty,\;\forall\;\tau\in(T_1,T].
$$

\noindent {\bf Step 3.} We claim that 
\begin{equation}\label{eq:starting point-x1}
    \lim\limits_{\tau\rightarrow0^+}x_1(\tau)=-\infty.
\end{equation}
Otherwise, there exists a sequence $\{\tau_n\}_{n=1}^\infty$ such that $\tau_n\rightarrow 0^+$, and $x_1(\tau_n)\rightarrow x_0>-\infty$ as $n$ tends to infinite. Combining the fact that $u\in C(\overline{\cal U}_T)$, we know that
$$ 
 u(0,x_0)=\lim\limits_{n\rightarrow+\infty}u(\tau_n,x_1(\tau_n))
 =\lim\limits_{n\rightarrow+\infty}(-\zeta_1)=-\zeta_1,
$$
which contradicts with the initial condition $u(0,x)=0$. So, we have shown \eqref{eq:starting point-x1}. 

Moreover, We claim that 
\begin{equation}\label{eq:starting point-x0}
    \lim\limits_{\tau\rightarrow T_1^+}x_0(\tau)=+\infty.
\end{equation}
Otherwise, there exists a sequence $\{\tau_n\}_{n=1}^\infty$ such that $\tau_n\rightarrow T_1^+$, and $x_0(\tau_n)\rightarrow x_0<+\infty$ as $n$ tends to infinite. Combining the fact that $u\in C(\overline{\cal U}_T)$, we know that
$$ 
u(T_1,x_0)=\lim\limits_{n\rightarrow+\infty}u(\tau_n,x_0(\tau_n))=
 \lim\limits_{n\rightarrow+\infty}\zeta_0=\zeta_0,
$$
which contradicts with the second  inequality in Lemma \ref{lem-boundary estimate1}. So, we have shown \eqref{eq:starting point-x0}. 

\noindent {\bf Step 4.} We improve the regularity of the free boundaries $x_1(\cdot),x_0(\cdot)$ and the strong solution $u$. 

It is clear $u=-\zeta_1$ in ${\cal RR}_1$ and $u\in C^\infty(\overline{{\cal RR}_1})$, and $u=\zeta_0$ in ${\cal RR}_0$ and $u\in C^\infty(\overline{{\cal RR}_0})$, and $-\zeta_1<u<\zeta_0$ in ${\cal WR}$. Moreover, from the interior estimates for PDEs (see \cite{Li96}), we know that 
$u\in C^\infty({\cal WR})$.

From the last two inequalities in Theorem \ref{th-u}, we know that for any $\varepsilon$, there exists a positive constant  $C_\varepsilon$ such that 
\begin{eqnarray} \label{es-cone}   
 -C_\varepsilon\partial_xu\leq \partial_\tau u\leq C_\varepsilon e^x\partial_xu\;\;\mbox{a.e. in}\;\;
 [\varepsilon,T]\times \mathbb{R}.
\end{eqnarray}
Combining \eqref{partial-x-u-x1x0}, we know that  $\partial_\tau u\in C(\overline{\cal U}_T)$ and 
$$
 \partial_\tau u(\tau,x_1(\tau))=\lim\limits_{(t,x)\rightarrow(\tau,x_1(\tau))}\partial_\tau u(t,x)=0,\quad 
  \partial_\tau u(\tau,x_0(\tau))=\lim\limits_{(t,x)\rightarrow(\tau,x_0(\tau))}\partial_\tau u(t,x)=0.
$$
So, from the differential equation in \eqref{eq:VI-u} and $u\in C^\infty({\cal WR})$, we know that 
$\partial_{xx} u\in C(\overline{\cal WR})$, and 
\begin{eqnarray*}   
 \lim\limits_{x\rightarrow (x_1(\tau))^+}(\partial_{xx} u+\partial_{x} u)(\tau,x)&=&\dfrac{2}{\t^2}\left[(L_1-L_0)e^{-x_1(\tau)}-(\e_1-\e_0+r\zeta_1)\right]
 \\[2mm]
 &>&\dfrac{2}{\t^2}\left[(L_1-L_0)e^{-X_1}-(\e_1-\e_0+r\zeta_1)\right]=0,\;\forall\;\tau\in(0,T],
 \\[2mm]   
 \lim\limits_{x\rightarrow (x_0(\tau))^-}(\partial_{xx} u+\partial_{x} u)(\tau,x)&=&\dfrac{2}{\t^2}\left[(L_1-L_0)e^{-x_0(\tau)}-(\e_1-\e_0-r\zeta_0)\right]<0,\;\forall\;\tau\in(T_1,T].
\end{eqnarray*}
Moreover, from Lemma \ref{lem-boundary estimate2}, we deduce that there exists a constant $K_\varepsilon$ such that 
\begin{eqnarray}\label{es-x0x1}
 -K_\varepsilon<x_1(\tau)<X_1<X_2<x_0(\tau),\;\forall\;\tau\in [\varepsilon,T],\qquad 
 X_2<x_0(\tau)<K_\varepsilon,\;\forall\;\tau\in [T_1+\varepsilon,T].
\end{eqnarray}

In order to improve the regularity of $x_1(\cdot)$, we consider the VI of $u$ in the domain $\widetilde{\cal U}_{1,x}=\{(\tau,x):x>-K_\varepsilon-C_\varepsilon(T-\tau),\varepsilon<\tau\leq T\}$. We apply the following transformation
$$ 
 w_1(\tau,y)=u(\tau,x),\qquad x=y+C_\varepsilon\tau,
$$
then  $w_1$ satisfies the following VI,
\begin{equation*}
    \begin{cases}
        \partial_\tau w_1 - {\cal L}_{1,y}w_1 -(\e_1-\e_0)+(L_1-L_0)e^{-y-C_\varepsilon\tau}=0\;\;\;\mbox{in}\;\;
        \{(\tau,y)\in\widetilde{\cal U}_{1,y}:-\zeta_1 <w_1(\tau,y)< \zeta_0\}; \vspace{2mm}\\
        \partial_\tau w_1 - {\cal L}_{1,y}w_1 -(\e_1-\e_0)+(L_1-L_0)e^{-y-C_\varepsilon\tau}\ge 0\;\;\;\mbox{in}\;\;
        \{(\tau,y)\in\widetilde{\cal U}_{1,y}:w_1(\tau,x)=-\zeta_1\}; \vspace{2mm}\\
        \partial_\tau w_1 - {\cal L}_{1,y}w_1 -(\e_1-\e_0)+(L_1-L_0)e^{-y-C_\varepsilon\tau}\le 0\;\;\mbox{in}\;\;\;
        \{(\tau,y)\in\widetilde{\cal U}_{1,y}:w_1(\tau,x)=\zeta_0\}; \vspace{2mm}\\
        -\zeta_1\leq w_1\leq \zeta_0;\quad         w_1(\varepsilon,y)=u(\varepsilon,y+C_\varepsilon\varepsilon),\;y\in[K_{1,\varepsilon},+\infty);\quad        w_1(\tau,K_{1,\varepsilon})=u(\tau,K_{1,\varepsilon}+C_\varepsilon\tau),\;
        \tau\in [\varepsilon,T] 
    \end{cases}
\end{equation*}
with 
\begin{equation*} 
		 {\cal L}_{1,y}:=\dfrac{\t^2}{2}\partial_{yy}+ \left(\beta -r+{\frac{\t^2}{2}}+C_\varepsilon\right)\partial_{y}- r,\quad 
   \widetilde{\cal U}_{1,y}:=(\varepsilon,T]\times(K_{1,\varepsilon},+\infty),\quad 
   K_{1,\varepsilon}:=-K_\varepsilon-C_\varepsilon T.
\end{equation*}
From the transformation and  \eqref{es-x0x1}, we deduce that the free boundaries
$$ 
  \{(\tau,x_1(\tau)):\tau\in(\varepsilon,T]\}\subset \widetilde{\cal U}_{1,x},\qquad 
  \{(\tau,y_1(\tau)):\tau\in(\varepsilon,T]\}\subset \widetilde{\cal U}_{1,y},\;\;
  y_1(\tau):=x_1(\tau)-C_\varepsilon\tau.
$$
Moreover, by the transformation and \eqref{es-cone}, we know that 
$$
 \partial_\tau (w_1-(-\zeta_1))=\partial_\tau u+C_\varepsilon\partial_x u\geq0,\;\;
 \partial_y (w_1-(-\zeta_1))=\partial_x u\geq0\;\;
 \mbox{in}\;\;\widetilde{\cal U}_{1,y}.
$$ 
Note that $-\zeta_1$ is the lower obstacle. So, by the standard method (\cite{F2}), we know that the free boundary in $(\tau,y)-$Coordinate System $y_1(\cdot)\in C^\infty((\varepsilon,T])$. Hence, the free boundary in $(\tau,x)-$Coordinate System $x_1(\cdot)\in C^\infty((\varepsilon,T])$, too. Since $\varepsilon$ is arbitrary, we deduce that $x_1(\cdot)\in C^\infty((0,T)$.

In order to improve the regularity of $x_0(\cdot)$, we consider the VI of $u$ in the domain $\widetilde{\cal U}_{2,x}=\{(\tau,x):x<K_\varepsilon+\widetilde{C}_\varepsilon(T-\tau),T_1+\varepsilon<\tau\leq T\}$, and apply the following transformation
$$ 
 w_2(\tau,y)=u(\tau,x),\quad x=y-\widetilde{C}_\varepsilon\tau,\quad \widetilde{C}_\varepsilon=C_\varepsilon e^{K_\varepsilon},
$$
where $C_\varepsilon$ and $K_\varepsilon$ are respectively the constants in \eqref{es-cone} and \eqref{es-x0x1}, then  $w_2$ satisfies the following VI,
\begin{equation*}
    \begin{cases}
        \partial_\tau w_2 - {\cal L}_{2,y}w_2 -(\e_1-\e_0)+(L_1-L_0)e^{-y+\widetilde{C}_\varepsilon\tau}=0\;\;\;\mbox{in}\;\;
        \{(\tau,y)\in\widetilde{\cal U}_{2,y}:-\zeta_1 <w_2(\tau,y)< \zeta_0\}; \vspace{2mm}\\
        \partial_\tau w_2 - {\cal L}_{2,y}w_2 -(\e_1-\e_0)+(L_1-L_0)e^{-y+\widetilde{C}_\varepsilon\tau}\ge 0\;\;\;\mbox{in}\;\;
        \{(\tau,y)\in\widetilde{\cal U}_{2,y}:w_2(\tau,y)=-\zeta_1\}; \vspace{2mm}\\
        \partial_\tau w_2 - {\cal L}_{2,y}w_2 -(\e_1-\e_0)+(L_1-L_0)e^{-y+\widetilde{C}_\varepsilon\tau}\le 0\;\;\mbox{in}\;\;\;
        \{(\tau,y)\in\widetilde{\cal U}_{2,y}:w_2(\tau,y)=\zeta_0\}; \vspace{2mm}\\
        -\zeta_1 \leq w_2 \leq \zeta_0;\qquad w_2(T_1+\varepsilon,y)=u(T_1+\varepsilon,y-\widetilde{C}_\varepsilon(T_1+\varepsilon)),\;y\in(-\infty, K_{2,\varepsilon}]; \vspace{2mm}\\        w_2(\tau,K_{2,\varepsilon})=u(\tau,K_{2,\varepsilon}-\widetilde{C}_\varepsilon\tau),\,
        \tau\in [T_1+\varepsilon,T] 
    \end{cases}
\end{equation*}
with 
\begin{equation*} 
		 {\cal L}_{2,y}:=\dfrac{\t^2}{2}\partial_{yy}+ \left(\beta -r+{\frac{\t^2}{2}}-\widetilde{C}_\varepsilon\right)\partial_{y}- r,\quad 
   \widetilde{\cal U}_{2,y}:=(T_1+\varepsilon,T]\times(-\infty, K_{2,\varepsilon}),\quad 
   K_{2,\varepsilon}:=K_\varepsilon+\widetilde{C}_\varepsilon T.
\end{equation*}
From the transformation and  \eqref{es-x0x1}, we deduce that the free boundaries
$$ 
  \{(\tau,x_0(\tau)):\tau\in(T_1+\varepsilon,T]\}\subset \widetilde{\cal U}_{2,x},\;\;\;
  \{(\tau,y_0(\tau)):\tau\in(T_1+\varepsilon,T]\}\subset \widetilde{\cal U}_{2,y},\;\;
  y_0(\tau):=x_0(\tau)+\widetilde{C}_\varepsilon\tau.
$$
Moreover, by the transformation and \eqref{es-cone}, we know that 
$$
 \partial_\tau (w_2-\zeta_0)=\partial_\tau u-\widetilde{C}_\varepsilon\partial_x u\leq
 (\partial_\tau u- C_\varepsilon e^{x}\partial_x u){\rm\bf I}_{\{x\leq K_\varepsilon\}}+(\partial_\tau\zeta_0 -\widetilde{C}_\varepsilon\partial_x\zeta_0){\rm\bf I}_{\{x>K_\varepsilon\}}\leq0\;\;\mbox{in}\;\;\widetilde{\cal U}_{2,y}; 
$$ 
and 
$$
 \partial_y (w_2-\zeta_0)=\partial_x u\geq0\;\;\mbox{in}\;\;\widetilde{\cal U}_{2,y}.
$$
 Note that $\zeta_0$ is the upper obstacle. So, by the standard method, we know that the free boundary in $(\tau,y)-$Coordinate System $y_0(\cdot)\in C^\infty((T_1+\varepsilon,T])$. Hence, the free boundary in $(\tau,x)-$Coordinate System $x_0(\cdot)\in C^\infty((T_1+\varepsilon,T])$, too. Since $\varepsilon$ is arbitrary, we know that $x_0(\cdot)\in C^\infty((T_1,T])$. Moreover, by the standard theory for PDEs, we can deduce that $u\in C^\infty(\overline{\cal WR})$.
\end{proof}

\begin{figure}[ht]
	\centering
	\subfigure[$\Lambda_0(t)$]{\includegraphics[scale=0.6]{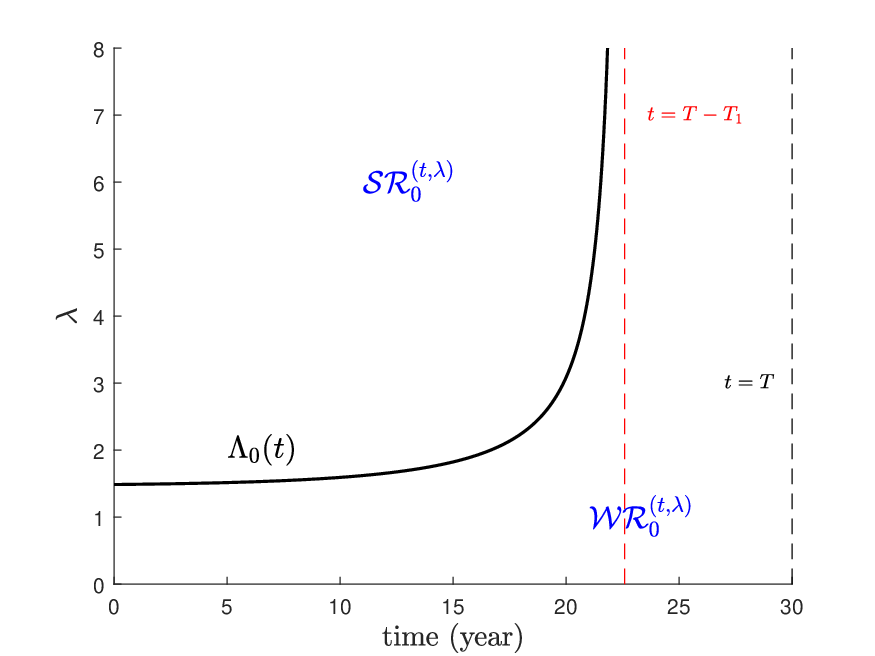}}
	\subfigure[$\Lambda_1(t)$]{\includegraphics[scale=0.6]{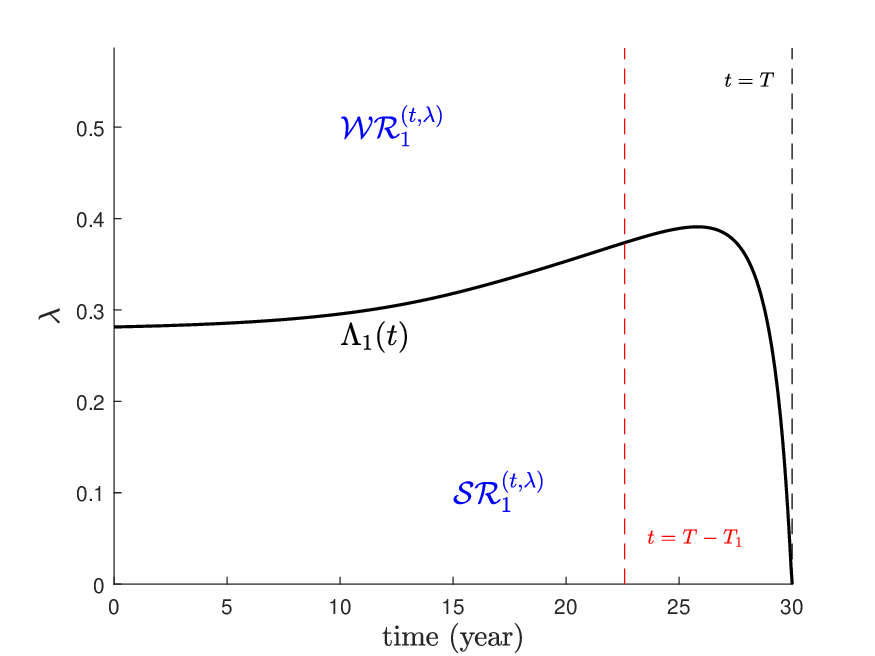}}
	\caption{The four regions $\mathcal{WR}_0^{(t,\l)}$, $\mathcal{SR}_0^{(t,\l)}$, $\mathcal{WR}_1^{(t,\l)}$, and $\mathcal{SR}_1^{(t,\l)}$ in $(t,\l)$-domain \label{fig:regions}}
\end{figure}

Thanks to Theorem \ref{theorem for free boundaires}, we can obtain the following theorem.
\begin{theorem}\label{property-Q}
If we define $\L_0(t)$ and $\L_1(t)$ by
\begin{equation}\label{eq:L0L1}
    \L_0(t):= e^{x_0(T-t)}\;\;\mbox{and}\;\;\L_1(t):=e^{x_1(T-t)},\;\;\;\forall\;\;t\in[0,T],
\end{equation}
where the functions $x_0(\cdot)$ and $x_1(\cdot)$ are defined in \eqref{de:x0x1}, then $\L_0$ and $\L_1$ satisfy the following properties. 
\begin{itemize}
    \item[(a)] \begin{equation}   \label{de:w0w1}
  \L_0(\tau)=\sup\{\l>0:{\cal P}(t,\l)<\zeta_0\};\quad\L_1(\tau)=\sup\{\l>0:{\cal P}(t,\l)=-\zeta_1\} ,\;\;\forall\;\;
  \tau\in(0,T],
\end{equation}
and
\begin{equation}\label{de:domain-t-lambda}
    \begin{aligned}
        \mathcal{WR}_0^{(t,\l)}:=&\{(t,\l)\in{\cal N}_T:({\cal Q}_1-{\cal Q}_0)(t,\l)<\zeta_0\l\}
        =\{(t,\l)\in{\cal N}_T:0<\l < \L_0(t)\};\vspace{2mm}\\
        \mathcal{SR}_0^{(t,\l)}:=&\{(t,\l)\in{\cal N}_T:({\cal Q}_1-{\cal Q}_0)(t,\l)=\zeta_0\l\}=\{(t,\l)\in{\cal N}_T:\l\geq \L_0(t)\};\vspace{2mm}\\
        \mathcal{WR}_1^{(t,\l)}:=&\{(t,\l)\in{\cal N}_T:({\cal Q}_1-{\cal Q}_0)(t,\l)>-\zeta_1\l\}=\{(t,\l)\in{\cal N}_T:\;\l>\L_1(t)\};\vspace{2mm}\\
        \mathcal{SR}_1^{(t,\l)}:=&\{(t,\l)\in{\cal N}_T:({\cal Q}_1-{\cal Q}_0)(t,\l)=-\zeta_1\l\}=\{(t,\l)\in{\cal N}_T:  0<\l\le \L_1(t)\},
    \end{aligned}
\end{equation}
where $({\cal Q}_0,{\cal Q}_1)$ is the unique strong solution of \eqref{de:Q0-and-Q1}, and satisfy VIs \eqref{eq:HJB-1} and \eqref{eq:HJB-2} (see Figure \ref{fig:regions}). 
    \item[(b)] $\L_0(\cdot)\in C^\infty([0,T-T_1))$ and $\L_1(\cdot)\in C^\infty([0,T))$.
    \item[(c)] $\L_0(\cdot)=+\infty$ in $[T-T_1,T]$, and $\L_0(\cdot)<+\infty$ in $[0,T-T_1)$, and $0<\L_1(\cdot)\leq e^{X_1}<e^{X_2}\leq\L_0(\cdot)$ in $[0,T)$,  and 
    $$
    \lim_{t\to (T-T_1)^-}\L_0(t) = +\infty \;\;\mbox{and}\;\;
    \lim_{t\to T^-} \L_1(t)=0.
    $$
\end{itemize}

Moreover, ${\cal Q}_0,\,{\cal Q}_1\in W^{1,2}_{p,loc}({\cal N}_T)\cap C^{0,1}(\overline{\cal N}_T)$ with $p\geq1$, and 
$${\cal Q}_0\in C^\infty\left(\;\overline{\mathcal{WR}_0^{(t,\l)}}\;\right)\cap C^\infty\left(\;\overline{\mathcal{SR}_0^{(t,\l)}}\;\right),\;\;
{\cal Q}_1\in C^\infty\left(\;\overline{\mathcal{WR}_1^{(t,\l)}}\;\right)\cap C^\infty\left(\;\overline{\mathcal{SR}_1^{(t,\l)}}\setminus\{(T,0)\}\;\right),$$ and satisfy the following properties.
\begin{itemize}
    \item[(d)] 
\begin{equation}\label{expression-Q0}
     {\cal Q}_0=\dfrac{1-e^{-r(T-t)}}{r}\e_0\l -\dfrac{1-e^{-\b(T-t)}}{\b}L_0\;\;\mbox{in}\;\;
     [T-T_1,T]\times(0,+\infty).
\end{equation}
    \item[(e)]$|\partial_{\l}{\cal Q}_0|,  |\partial_{\l}{\cal Q}_0|\leq C,\,\partial_{\l}{\cal Q}_0\geq0$ in $\overline{\cal N}_T$, where $C$ is a large enough constant.
    \item[(f)] $\partial_{\l\l}{\cal Q}_0>0$ in ${\cal N}_{T-T_1}$, and $\partial_{\l\l}{\cal Q}_0=0$ in $[T-T_1,T]\times(0,+\infty)$; and $\partial_{\l\l}{\cal Q}_1>0$ in ${\cal N}_{T-T_1}\cup\mathcal{WR}_1^{(t,\l)}$, and $\partial_{\l\l}{\cal Q}_1=0$ in $[T-T_1,T]\times(0,+\infty)\cap\mathcal{SR}_1^{(t,\l)}$.
\end{itemize}
\end{theorem}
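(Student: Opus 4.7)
My plan is to leverage the change of variables \eqref{tran-P-u} systematically. Since ${\cal P}(t,\lambda)=\lambda\,u(T-t,\ln\lambda)$ and, by definition, $\Lambda_j(t)=e^{x_j(T-t)}$, parts (a)--(c) are essentially translations of Theorem \ref{theorem for free boundaires} into the $(t,\lambda)$ coordinates. Substituting $\lambda=e^x$ in the identity ${\cal P}=\lambda u$ converts the level-set conditions $\{u<\zeta_0\}$ and $\{u=-\zeta_1\}$ into $\{{\cal P}<\zeta_0\lambda\}$ and $\{{\cal P}=-\zeta_1\lambda\}$; combined with ${\cal P}={\cal Q}_1-{\cal Q}_0$ (Theorem \ref{th-relationship-Q-and-P}) and $\partial_x u\ge 0$ (Theorem \ref{th-u}), this yields the four region identities \eqref{de:domain-t-lambda} and the characterization \eqref{de:w0w1}. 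The smoothness of $\Lambda_0,\Lambda_1$ in (b), the strict bounds $\Lambda_1\le e^{X_1}<e^{X_2}\le\Lambda_0$, the identity $\Lambda_0\equiv+\infty$ on $[T-T_1,T]$, and the limits $\Lambda_1(t)\to 0$ as $t\to T^-$ and $\Lambda_0(t)\to+\infty$ as $t\to(T-T_1)^-$ then follow by direct transcription through the smooth exponential map.

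For (d), Lemma \ref{lem-boundary estimate1} gives $u<\zeta_0$ on $[0,T_1]\times\mathbb{R}$, hence ${\cal P}<\zeta_0\lambda$ on $[T-T_1,T]\times(0,\infty)$ and $f_0\equiv 0$ there. So ${\cal Q}_0$ satisfies the linear Cauchy--Euler PDE $\partial_t{\cal Q}_0+{\cal L}{\cal Q}_0+\epsilon_0\lambda-L_0=0$ with terminal condition ${\cal Q}_0(T,\lambda)=0$. The affine ansatz ${\cal Q}_0=A(t)\lambda+B(t)$ decouples this into the first-order ODEs $A'=rA-\epsilon_0$, $A(T)=0$ and $B'=\beta B+L_0$, $B(T)=0$, whose explicit solutions yield \eqref{expression-Q0}. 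Uniqueness of strong solutions of \eqref{de:Q0-and-Q1} under the growth bound $0\le{\cal Q}_0\le C_T\lambda$ from Theorem \ref{th-relationship-Q-and-P} ensures this affine form is the restriction of ${\cal Q}_0$.

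For (e), the formula (d) provides $\partial_\lambda{\cal Q}_0=\epsilon_0(1-e^{-r(T-t)})/r\in[0,\epsilon_0/r]$ on $[T-T_1,T]\times(0,\infty)$ directly. On $[0,T-T_1)\times(0,\infty)$, combining $0\le{\cal Q}_j\le C_T\lambda$ with interior $W^{2,p}$ estimates for \eqref{de:Q0-and-Q1} (whose source terms $f_0,f_1$ lie in $L^p_{loc}$ with at most linear growth) and a scaling argument that exploits the homogeneity of ${\cal L}$ yields a uniform bound on $\partial_\lambda{\cal Q}_0$; the corresponding bound on $\partial_\lambda{\cal Q}_1$ then follows from $\partial_\lambda{\cal Q}_1=\partial_\lambda{\cal Q}_0+u+\partial_x u$ (obtained by differentiating ${\cal P}=\lambda u$) and the estimates of Theorem \ref{th-u}. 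The sign $\partial_\lambda{\cal Q}_0\ge 0$ is proved by passing to log coordinates $v_0(\tau,x)={\cal Q}_0(T-\tau,e^x)$, where $\partial_x v_0=\lambda\partial_\lambda{\cal Q}_0$, and applying the strong maximum principle to the PDE obtained by differentiating the defining equation once.

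For (f), the affine formula (d) gives $\partial_{\lambda\lambda}{\cal Q}_0\equiv 0$ on $[T-T_1,T]\times(0,\infty)$, and on $\mathcal{SR}_1^{(t,\lambda)}\cap[T-T_1,T]\times(0,\infty)$ the identity ${\cal Q}_1={\cal Q}_0-\zeta_1\lambda$ combined with (d) likewise gives $\partial_{\lambda\lambda}{\cal Q}_1\equiv 0$. To establish the strict inequalities on ${\cal N}_{T-T_1}$ and $\mathcal{WR}_1^{(t,\lambda)}$, I again pass to log coordinates (where $\lambda^2\partial_{\lambda\lambda}{\cal Q}_j=\partial_{xx}v_j-\partial_x v_j$), differentiate the defining PDE twice, and apply the strong maximum principle on each region, using as boundary input the Hopf-type strict inequalities $\lim_{x\to x_1(\tau)^+}(\partial_{xx}u+\partial_x u)>0$ and $\lim_{x\to x_0(\tau)^-}(\partial_{xx}u+\partial_x u)<0$ from Theorem \ref{theorem for free boundaires}; these, via the algebraic identity $\lambda\partial_{\lambda\lambda}{\cal P}=\partial_x u+\partial_{xx}u$, translate into strictly non-zero one-sided second derivatives of ${\cal Q}_1-{\cal Q}_0$ at the free boundaries, which combine with the matching conditions (forced by ${\cal Q}_0,{\cal Q}_1\in C^{1,\alpha}_{loc}$ through Sobolev embedding) to yield the desired strict inequality for each ${\cal Q}_j$ individually. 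I expect the main obstacle to be precisely this last step: ruling out the degenerate scenario in which ${\cal Q}_0$ or ${\cal Q}_1$ becomes affine in $\lambda$ on an open subset requires carefully exploiting the distinct forcing terms in \eqref{de:Q0-and-Q1} across the free boundary together with the Hopf-type non-degeneracy from Theorem \ref{theorem for free boundaires}, which supplies the non-trivial boundary information needed to seed the strong maximum principle argument.
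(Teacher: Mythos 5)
Your handling of (a)--(d) coincides with the paper's: (a)--(c) are indeed obtained by transcribing Theorem \ref{theorem for free boundaires} through the transformation \eqref{tran-P-u} together with ${\cal P}={\cal Q}_1-{\cal Q}_0$ and $\partial_x u\ge0$, and (d) is the explicit solution of the linear terminal-value problem on $[T-T_1,T]\times(0,+\infty)$ (the paper checks the affine formula directly; same computation as your ansatz). The genuine gap is exactly where you flag it, in (f), and it also weakens your route to (e). The Hopf-type limits of Theorem \ref{theorem for free boundaires} only give strict one-sided signs for $\partial_{\l\l}{\cal P}=\partial_{\l\l}({\cal Q}_1-{\cal Q}_0)$ at $\L_1$ and $\L_0$; to turn these into boundary seeds for ${\cal Q}_1$ and ${\cal Q}_0$ \emph{separately}, and to let the strong maximum principle upgrade a nonnegative solution with a positive boundary trace to a strictly positive one, you need the a priori global one-sided bounds $\partial_{\l\l}{\cal Q}_0\ge0$ and $\partial_{\l\l}{\cal Q}_1\ge0$ (together with $\partial_\l{\cal Q}_0\ge0$ and $|\partial_\l{\cal Q}_j|\le C$) on ${\cal N}_T$. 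Your proposal never produces these, and the fallback you suggest ($C^{1,\alpha}_{loc}$ matching across the free boundary) cannot help, because the second derivatives of ${\cal Q}_j$ genuinely jump there; so the ``degenerate affine'' scenario you worry about is not excluded by your argument.

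The paper closes this hole with a step absent from your plan: it rewrites \eqref{de:Q0-and-Q1} with right-hand sides $\widetilde f_0,\widetilde f_1$ that are piecewise linear in $\l$ across the smooth curves $\L_0,\L_1$, mollifies them, applies the comparison principle to the equations satisfied by $\partial_\l{\cal Q}^\varepsilon_j$ and $\partial_{\l\l}{\cal Q}^\varepsilon_j$ for the classical solutions of the mollified problems, and passes to the limit in $W^{2,1}_p$; this yields at once the uniform bound $|\partial_\l{\cal Q}_j|\le C$, the sign $\partial_\l{\cal Q}_0\ge0$, and the convexity $\partial_{\l\l}{\cal Q}_0,\partial_{\l\l}{\cal Q}_1\ge0$, i.e.\ all of (e) plus the missing ingredient for (f). With it, $\lim_{\l\to(\L_1(t))^+}\partial_{\l\l}{\cal Q}_1\ge\lim_{\l\to(\L_1(t))^+}\partial_{\l\l}{\cal P}>0$ and $\lim_{\l\to(\L_0(t))^-}\partial_{\l\l}{\cal Q}_0\ge-\lim_{\l\to(\L_0(t))^-}\partial_{\l\l}{\cal P}>0$, and the strong maximum principle applied to the homogeneous equation for $\partial_{\l\l}{\cal Q}_0$ on $\mathcal{WR}_0^{(t,\l)}\cap{\cal N}_{T-T_1}$ (zero data at $t=T-T_1$ from (d), positive data on $\L_0$) propagates strict positivity, with the identities ${\cal Q}_0={\cal Q}_1-\zeta_0\l$ on $\mathcal{SR}_0^{(t,\l)}$ and ${\cal Q}_1={\cal Q}_0-\zeta_1\l$ on $\mathcal{SR}_1^{(t,\l)}$ covering the switching regions. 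Note also that your part-(e) maximum-principle argument for $\partial_\l{\cal Q}_0\ge0$ meets the same (milder) obstruction: $\widetilde f_0$ jumps across $\L_0$, so $\partial_\l\widetilde f_0$ is only a nonnegative measure, which is precisely what the mollification is there to handle; your interior-estimate/scaling bound for $|\partial_\l{\cal Q}_j|$ is plausible but is superseded by the same comparison argument. In short, import the mollification-plus-comparison lemma and (f) closes; without it, it does not.
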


\begin{proof}
From the transformation \eqref{tran-P-u} and \eqref{de:x0x1}, \eqref{de:domain-tau-x}, and the results in Theorem \ref{theorem for free boundaires} and \ref{th-relationship-Q-and-P}, we know all properties of $\L_0$ and $\L_1$, and $\partial_t {\cal P}, \partial_\l {\cal P}\in C(\overline{\cal N}_T)$, and 
$$
 {\cal P}\in C^\infty\left(\;\overline{\mathcal{WR}_0^{(t,\l)}}\cap\overline{\mathcal{WR}_1^{(t,\l)}}\;\right)\cap C^\infty\left(\;\overline{\mathcal{SR}_0^{(t,\l)}}\;\right)\cap C^\infty\left(\;\overline{\mathcal{SR}_1^{(t,\l)}}\setminus\{(T,0)\}\;\right).
$$ 
From PDE \eqref{de:Q0-and-Q1}, we know that $({\cal Q}_0,{\cal Q}_1)$ is the unique strong solution of the following PDE, 
\begin{equation*}
    \begin{cases}
    &-\left(\partial_t {\cal Q}_0 +{\cal L}{\cal Q}_0 \right)=\widetilde{f}_0
    :=\left[(\e_1-\e_0-r\zeta_0)\l-(L_1-L_0)\right] {\rm\bf I}_{\mathcal{SR}_0^{(t,\l)}}+(\e_0 \l -L_0)\geq \e_0 \l -L_0;\vspace{2mm}\\ 
    &-\left(\partial_t {\cal Q}_1 +{\cal L}{\cal Q}_1\right)=\widetilde{f}_1:=\left[(L_1-L_0)-(\e_1-\e_0+r\zeta_1)\l\right] {\rm\bf I}_{\mathcal{SR}_1^{(t,\l)}}+(\e_1 \l -L_1)\geq\e_1 \l -L_1; \vspace{2mm}\\
    &{\cal Q}_0(T,\l) = {\cal Q}_1(T,\l)=0,\quad\l\in(0,+\infty),
    \end{cases}
\end{equation*}
Since the boundaries of $\mathcal{SR}_0^{(t,\l)}$ and $\mathcal{SR}_1^{(t,\l)}$ are respectively $\L_0(\cdot)\in C^\infty([0,T))$ and $\L_1(\cdot)\in C^\infty([0,T-T_1))$, and 
$$
\widetilde{f}_0\in C^\infty\left(\;\overline{\mathcal{WR}_0^{(t,\l)}}\;\right)\cap C^\infty\left(\;\overline{\mathcal{SR}_0^{(t,\l)}}\;\right)\cap 
L^p_{loc}({\cal N}_T),\quad 
\widetilde{f}_1\in C^\infty\left(\;\overline{\mathcal{WR}_1^{(t,\l)}}\;\right)\cap 
C^\infty\left(\;\overline{\mathcal{SR}_1^{(t,\l)}}\;\right)\cap 
L^p_{loc}({\cal N}_T).
$$ 
Applying the theory for PDEs, we deduce that the regularity results about ${\cal Q}_0$ and ${\cal Q}_1$. 

Note that $\mathcal{SR}_0^{(t,\l)}\subset{\cal N}_{T-T_1}$. So, we know that in the domain $[T-T_1,T]\times(0,+\infty)$, $\widetilde{f}_0= \e_0 \l -L_0$, and  ${\cal Q}_0$  satisfies that
\begin{equation*}
    \begin{cases}
    -\left(\partial_t {\cal Q}_0 +{\cal L}{\cal Q}_0 \right)=\e_0 \l -L_0&\mbox{in}\;\;[T-T_1,T]\times(0,+\infty);\vspace{2mm}\\
    {\cal Q}_0(T,\l) =0,\quad&\l\in(0,+\infty).
    \end{cases}
\end{equation*}
It is not difficult to check that ${\cal Q}_0$ takes the form of \eqref{expression-Q0}.

Let $\widetilde{f}^\varepsilon_0$ and $\widetilde{f}^\varepsilon_1$ be the mollification of $\widetilde{f}_0$ and $\widetilde{f}_1$,  respectively, which satisfy
$$
 \widetilde{f}^\varepsilon_0\rightarrow \widetilde{f}_0\;\mbox{in}\;\;L^p({\cal N}_T^n)\cap C^{1+\alpha/2,2+\alpha}\left(\;{\overline{{\cal N}_T^n\cap \mathcal{WR}_0^{(t,\l)}}}\;\right)\cap C^{1+\alpha/2,2+\alpha}\left(\;{\overline{{\cal N}_T^n\cap \mathcal{SR}_0^{(t,\l)}}}\;\right);
$$
$$
  \widetilde{f}^\varepsilon_1\rightarrow \widetilde{f}_1\;\mbox{in}\;\;L^p({\cal N}_T^n)\cap C^{1+\alpha/2,2+\alpha}\left(\;{\overline{{\cal N}_T^n\cap \mathcal{WR}_1^{(t,\l)}}}\;\right)\cap C^{1+\alpha/2,2+\alpha}\left(\;{\overline{{\cal N}_T^n\cap \mathcal{SR}_1^{(t,\l)}}}\;\right)
$$
for any $n\in\mathbb{Z}_+$ and $p\geq1,0<\alpha<1$. Since $\L_0>\L_1$ and  \eqref{de:domain-t-lambda}, we know that $\widetilde{f}_0$ and $\widetilde{f}_1$ are convex, and
$$    
  \partial_{\l\l}\widetilde{f}_0,\;\partial_{\l\l}\widetilde{f}_1= 0\;\mbox{in}\;{\cal N}_T\setminus(\{(\tau,\L_0(\tau))\}\cup\{(\tau,\L_1(\tau))\}),\qquad 
  0\leq \partial_\lambda \widetilde{f}_0\leq\e_1,\;
  -r\zeta_1\leq \partial_\lambda \widetilde{f}_1\leq \e_1\;\mbox{in}\;\overline{{\cal N}}_T.
$$ 
So, we know that $\widetilde{f}^\varepsilon_0$ and $\widetilde{f}^\varepsilon_1$ satisfy
$$
 |\partial_\lambda \widetilde{f}^\varepsilon_0|,\;
 |\partial_\lambda \widetilde{f}^\varepsilon_1|\leq \e_1+r\zeta_1,\qquad 
 \partial_\lambda \widetilde{f}^\varepsilon_0,\;\partial_{\l\l}\widetilde{f}^\varepsilon_0,\;\partial_{\l\l}\widetilde{f}^\varepsilon_1\geq 0.
$$
Let $({\cal Q}^\varepsilon_0,{\cal Q}^\varepsilon_1)$ be the unique classical solution of the following PDE, 
\begin{equation*}
    \begin{cases}
    -\partial_t {\cal Q}^\varepsilon_0-{\cal L}{\cal Q}^\varepsilon_0=\widetilde{f}^\varepsilon_0;\;\; 
    -\partial_t {\cal Q}^\varepsilon_1-{\cal L}{\cal Q}^\varepsilon_1=\widetilde{f}^\varepsilon_1&\mbox{in}\;\;{\cal N}_T;\vspace{2mm}\\
    {\cal Q}^\varepsilon_0(T,\l) = {\cal Q}^\varepsilon_1(T,\l)=0,&\l\in(0,+\infty)
    \end{cases}
\end{equation*}

Applying the comparison principle to the PDEs for $\partial_{\l} {\cal Q}^\varepsilon_i, \partial_{\l\l} {\cal Q}^\varepsilon_i,\,i=0,1$, we can deduce that
$$
 |\partial_\lambda {\cal Q}^\varepsilon_0|,\;
 |\partial_\lambda {\cal Q}^\varepsilon_1|\leq (\e_1+r\zeta_1)/r,\qquad 
  \partial_\lambda {\cal Q}^\varepsilon_0,\;\partial_{\l\l}{\cal Q}^\varepsilon_0,\;\partial_{\l\l}{\cal Q}^\varepsilon_1\geq 0.
$$

Moreover, from the theory for PDEs, we know that 
$$
 {\cal Q}^\varepsilon_0\rightarrow {\cal Q}_0\;\mbox{in}\;\;W^{2,1}_p({\cal N}_T^n)\cap C^{2+\alpha/2,4+\alpha}\left(\;{\overline{{\cal N}_T^n\cap \mathcal{WR}_0^{(t,\l)}}}\;\right)\cap C^{2+\alpha/2,4+\alpha}\left(\;{\overline{{\cal N}_T^n\cap \mathcal{SR}_0^{(t,\l)}}}\;\right);
$$
$$
  {\cal Q}^\varepsilon_1\rightarrow {\cal Q}_1\;\mbox{in}\;\;W^{2,1}_p({\cal N}_T^n)\cap  C^{2+\alpha/2,4+\alpha}\left(\;{\overline{{\cal N}_T^n\cap \mathcal{WR}_1^{(t,\l)}}}\;\right)\cap C^{2+\alpha/2,4+\alpha}\left(\;{\overline{{\cal N}_T^n\cap \mathcal{SR}_1^{(t,\l)}}}\;\right)
$$
for any $n\in\mathbb{Z}_+$ and $p\geq1,0<\alpha<1$. Hence, we know that
$$
 |\partial_\lambda {\cal Q}_0|,\;
 |\partial_\lambda {\cal Q}_1|\leq C,\qquad 
  \partial_\lambda {\cal Q}_0,\;\partial_{\l\l}{\cal Q}_0,\;\partial_{\l\l}{\cal Q}_1\geq 0.
$$

Moreover, from transformation \eqref{tran-P-u} and Theorem \ref{theorem for free boundaires}, we know that 
$$
 \lim\limits_{\l\rightarrow (\L_1(t))^+}\partial_{\l\l} ({\cal Q}_1-{\cal Q}_0)(t,\l)= \lim\limits_{\l\rightarrow (\L_1(t))^+}\partial_{\l\l} {\cal P}(t,\l)=
 \lim\limits_{x\rightarrow (x_1(\tau))^+}\frac{1}{\l}(\partial_{xx} u+\partial_{x} u)(\tau,x)>0,\,\forall\,t\in[0,T),
$$
$$
 \lim\limits_{\l\rightarrow (\L_0(t))^-}\partial_{\l\l} ({\cal Q}_1-{\cal Q}_0)(t,\l)=
 \lim\limits_{x\rightarrow (x_0(\tau))^-}\frac{1}{\l}(\partial_{xx} u+\partial_{x} u)(\tau,x)<0,\,\forall\,t\in[0,T-T_1).
$$
So, we know that 
$$
  \lim\limits_{\l\rightarrow (\L_1(t))^+}\partial_{\l\l} {\cal Q}_1(t,\l)>0,\,\forall\,t\in[0,T),\qquad 
  \lim\limits_{\l\rightarrow (\L_0(t))^-}\partial_{\l\l} {\cal Q}_0(t,\l)>0,\,\forall\,t\in[0,T-T_1).
$$
From \eqref{expression-Q0}, we know that $\partial_{\l\l} {\cal Q}_0=0$ in $[T-T_1,T]\times(0,+\infty)$. Hence, in the domain $\mathcal{WR}_0^{(t,\l)}\cap{\cal N}_{T-T_1},\,\partial_{\l\l} {\cal Q}_0$ satisfies the following PDE,
\begin{equation*}
    \begin{cases}
    &-\partial_t (\partial_{\l\l} {\cal Q}_0)-\dfrac{\t^2}{2}\l^2\partial_{\l\l}(\partial_{\l\l} {\cal Q}_0) - (\beta -r+2\t^2 )\l\partial_{\l}(\partial_{\l\l} {\cal Q}_0)- (\beta -2r+\t^2 )(\partial_{\l\l} {\cal Q}_0)=0; \vspace{2mm}\\
    &(\partial_{\l\l} {\cal Q}_0)(T-T_1,\l) =0,\;\forall\;\l\in[0,+\infty);\qquad 
    (\partial_{\l\l} {\cal Q}_0)(t,\L_0(t))>0,\;\forall\,t\in[0,T-T_1).
    \end{cases}
\end{equation*}
Applying the strong principle for PDEs to the above PDE in the domain $\mathcal{WR}_0^{(t,\l)}\cap{\cal N}_{T-T_1}$, we deduce that $\partial_{\l\l} {\cal Q}_0>0$ in $\mathcal{WR}_0^{(t,\l)}$. Repeating the similar argument for $\partial_{\l\l} {\cal Q}_1$ in the domain $\mathcal{WR}_1^{(t,\l)}$, we can deduce that $\partial_{\l\l} {\cal Q}_1>0$ in $\mathcal{WR}_1^{(t,\l)}$. 

Moreover, from transformation \eqref{tran-P-u} and Theorem \ref{theorem for free boundaires}, we know that 
$$
 ({\cal Q}_1-{\cal Q}_0)(t,x)={\cal P}(t,x)=
 (\partial_{xx} u+\partial_{x} u)(\tau,x)=0\;\;\mbox{in}\;\;
 \mathcal{SR}_0^{(t,\l)}\cup\mathcal{SR}_1^{(t,\l)}.
$$
So, we deduce Conclusion (d).
\end{proof}

\begin{lem}\label{lem:limit-Q0Q1}
    The following limiting behaviors of $\partial_{\l}{\cal Q}_0$ and $\partial_{\l}{\cal Q}_1$ hold:
    \begin{footnotesize}
        \begin{equation}
        \begin{aligned}
            &\lim_{\l \to 0^+}\partial_{\l}{\cal Q}_0(t,\l)= \dfrac{1-e^{-r(T-t)}}{r}\e_0,\;\;
           \lim_{\l \to +\infty}\partial_{\l}{\cal Q}_1(t,\l)= \dfrac{1-e^{-r(T-t)}}{r}\e_1,\;\;
           \lim_{\l \to 0^+}\partial_{\l}{\cal Q}_1(t,\l)= \dfrac{1-e^{-r(T-t)}}{r}\e_0-\zeta_1,\\
           &\lim_{\l \to +\infty}\partial_{\l}{\cal Q}_0(t,\l)=\begin{cases}
             \dfrac{1-e^{-r(T-t)}}{r}\e_1-\zeta_0\;\;&\mbox{for}\;\;t\in[0,T-T_1);\\
            \dfrac{1-e^{-r(T-t)}}{r}\e_0\;\;&\mbox{for}\;\;t\in[T-T_1,T].
        \end{cases}
        \end{aligned}
    \end{equation}
    \end{footnotesize}
\end{lem}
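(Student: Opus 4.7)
The plan is to derive Feynman--Kac representations for $\cQ_0$ and $\cQ_1$ from the PDEs identified in the proof of Theorem~\ref{property-Q}, split each one into an affine-in-$\l$ part plus a correction coming from the indicator source, and then read off the four limits by combining dominated convergence with the convexity in $\l$ from Theorem~\ref{property-Q}(f). Let $\cY^{t,\l}_s=\l M_s$ with $M_s:=\exp\!\left((\b-r-\t^2/2)(s-t)+\t(B_s-B_t)\right)$ solve $d\cY_s=(\b-r)\cY_s\,ds+\t\cY_s\,dB_s,\;\cY_t=\l$, set
$$A_j(t,\l):=\frac{1-e^{-r(T-t)}}{r}\e_j\l-\frac{1-e^{-\b(T-t)}}{\b}L_j,\quad j=0,1,$$
and define $G_j:=\cQ_j-A_j$. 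Applying Feynman--Kac to the PDEs for $\cQ_0,\cQ_1$ in the proof of Theorem~\ref{property-Q} yields
\begin{align*}
G_0(t,\l)&=\mathbb{E}\!\left[\int_t^T e^{-\b(s-t)}\bigl[(\e_1-\e_0-r\z_0)\cY^{t,\l}_s-(L_1-L_0)\bigr]{\rm\bf I}_{\{\cY^{t,\l}_s\geq\L_0(s)\}}\,ds\right],\\
G_1(t,\l)&=\mathbb{E}\!\left[\int_t^T e^{-\b(s-t)}\bigl[(L_1-L_0)-(\e_1-\e_0+r\z_1)\cY^{t,\l}_s\bigr]{\rm\bf I}_{\{\cY^{t,\l}_s\leq\L_1(s)\}}\,ds\right].
\end{align*}
By \eqref{de:X1X2} and the bounds $\L_0\geq e^{X_2}$ on $[0,T-T_1)$ and $\L_1\leq e^{X_1}$ on $[0,T)$ from Theorem~\ref{property-Q}(c), both integrands are pointwise non-negative, hence $G_0,G_1\geq 0$; since $A_j$ is affine and $\cQ_j$ is convex in $\l$, $G_0$ and $G_1$ are themselves convex in $\l$.

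The analytic core is to establish $\partial_\l G_0(t,0^+)=0$ and $\partial_\l G_1(t,+\infty)=0$. Since $\L_0\equiv+\infty$ on $[T-T_1,T]$,
$$0\leq\frac{G_0(t,\l)}{\l}\leq(\e_1-\e_0-r\z_0)\,\mathbb{E}\!\left[\int_t^{T-T_1}e^{-\b(s-t)}M_s{\rm\bf I}_{\{M_s\geq\L_0(s)/\l\}}\,ds\right],$$
in which the integrand is dominated by $e^{-\b(s-t)}M_s$, whose Fubini integral equals $(1-e^{-r(T-T_1-t)})/r<\infty$, while the indicator tends to $0$ a.s.\ as $\l\to 0^+$. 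Dominated convergence therefore gives $G_0(t,\l)/\l\to 0$, and the standard chord-slope identity for convex functions (the chord slope $G_0(\l)/\l$ from the origin is nondecreasing and has the same limit as $\partial_\l G_0$ at $0^+$) upgrades this to $\partial_\l G_0(t,0^+)=0$. Similarly,
$$0\leq G_1(t,\l)\leq(L_1-L_0)\,\mathbb{E}\!\left[\int_t^T e^{-\b(s-t)}{\rm\bf I}_{\{M_s\leq\L_1(s)/\l\}}\,ds\right]\to 0\;\mbox{ as }\;\l\to+\infty,$$
again by dominated convergence. Since $G_1\geq 0$ is convex in $\l$ and vanishes at $+\infty$, its nondecreasing right derivative cannot tend to a strictly positive limit (else $G_1\to+\infty$) nor to a strictly negative one (else $G_1\to-\infty$), forcing $\partial_\l G_1(t,+\infty)=0$.

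Combining these with $\partial_\l A_0=(1-e^{-r(T-t)})\e_0/r$ and $\partial_\l A_1=(1-e^{-r(T-t)})\e_1/r$ gives the first and third limits in the statement. The remaining two follow from the contact-region structure in Theorem~\ref{property-Q}(a): for $t\in[0,T)$ one has $(0,\L_1(t)]\subset\mathcal{SR}_1^{(t,\l)}$ (since $\L_1(t)>0$), on which $\cQ_1=\cQ_0-\z_1\l$, and letting $\l\to 0^+$ gives $\partial_\l\cQ_1(t,0^+)=\partial_\l\cQ_0(t,0^+)-\z_1$; similarly for $t\in[0,T-T_1)$, $[\L_0(t),+\infty)\subset\mathcal{SR}_0^{(t,\l)}$ with $\cQ_0=\cQ_1-\z_0\l$ there, giving $\partial_\l\cQ_0(t,+\infty)=\partial_\l\cQ_1(t,+\infty)-\z_0$; for $t\in[T-T_1,T]$ the remaining identity is immediate from the explicit formula \eqref{expression-Q0}. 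The main technical obstacle I anticipate is the step $\partial_\l G_1(t,+\infty)=0$: dominated convergence controls $G_1$ itself but not its derivative, so one must pass through the convexity/monotonicity of the right derivative to close the argument.
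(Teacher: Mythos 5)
Your proposal is correct, but it proves the two ``core'' limits by a genuinely different route than the paper. The paper works directly with $\partial_\l\cQ_0$ and $\partial_\l\cQ_1$: using $\L_0\geq e^{X_2}$ and $\L_1\leq e^{X_1}$ it notes that these derivatives solve linear PDEs with constant source $\e_0$ (resp.\ $\e_1$) on $[0,T)\times(0,e^{X_2})$ (resp.\ $[0,T)\times(e^{X_1},+\infty)$), and then sandwiches them between explicit barriers of the form $\frac{1-e^{-r(T-t)}}{r}\e_j\pm C\l^{k_\pm}$, where $k_\pm$ are the characteristic roots of ${\cal L}_\l^\prime$, via the comparison principle; this even yields a rate of convergence. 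You instead work at the level of $\cQ_j$ itself: you subtract the affine particular solution $A_j$, represent the remainder $G_j$ probabilistically (this is exactly the representation the paper derives later, in the proof of Proposition \ref{pro:integral-equations}, by It\^o's formula and the boundedness of $\partial_\l\cQ_j$ from Theorem \ref{property-Q}, so the Feynman--Kac step you assert is available, though you should cite that justification rather than leave it implicit), use \eqref{de:X1X2} to get $G_j\geq0$, kill $G_0/\l$ as $\l\to0^+$ and $G_1$ as $\l\to+\infty$ by dominated convergence, and then upgrade decay of the function to decay of its $\l$-derivative through convexity of $\cQ_j$ (Theorem \ref{property-Q}) and elementary facts about monotone derivatives of convex functions --- correctly identifying and resolving the one delicate point, namely that dominated convergence alone does not control $\partial_\l G_1$. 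The remaining two limits are handled exactly as in the paper, via $\partial_\l\cQ_1=\partial_\l\cQ_0-\z_1$ on $\mathcal{SR}_1^{(t,\l)}$, $\partial_\l\cQ_0=\partial_\l\cQ_1-\z_0$ on $\mathcal{SR}_0^{(t,\l)}$ for $t<T-T_1$, and the explicit formula \eqref{expression-Q0}. In short: the paper's barrier/comparison argument is self-contained at the PDE level and quantitative; your argument is softer and reuses machinery the paper needs anyway, at the price of invoking convexity and a stochastic representation whose rigorous justification you should spell out (localization plus the martingale property of the stochastic integral, as in Proposition \ref{pro:integral-equations}).
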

\begin{proof}
From Theorem \ref{property-Q}, we know that $\partial_\l{\cal Q}_0$ and $\partial_\l{\cal Q}_1$ satisfy
\begin{equation*}
    \begin{cases}
    &-\partial_t (\partial_{\l} {\cal Q}_0)-{\cal L}_\l^\prime(\partial_{\l} {\cal Q}_0)-\e_0=0\;\;\mbox{in}\;\;
    [0,T)\times(0,e^{X_2});\vspace{2mm}\\
    &-\partial_t (\partial_{\l} {\cal Q}_1)-{\cal L}_\l^\prime(\partial_{\l} {\cal Q}_1)-\e_1=0\;\;\mbox{in}\;\;
    [0,T)\times(e^{X_1},+\infty);\vspace{2mm}\\
    & |\partial_{\l}{\cal Q}_0(t,e^{X_2})|, |\partial_{\l}{\cal Q}_1(t,e^{X_1})|\leq C,\;\;t\in[0,T];\qquad \partial_{\l}{\cal Q}_0(T,\l)=\partial_{\l}{\cal Q}_1(T,\l)=0,\;\;\l\in(0,+\infty),
    \end{cases}
\end{equation*}
where 
$$
{\cal L}_\l^\prime:=\dfrac{\t^2}{2}\l^2\partial_{\l\l}+ (\beta -r+\t^2 )\l\partial_{\l}-r.
$$
Temporarily denote 
$$
  \overline{v}_0=\dfrac{1-e^{-r(T-t)}}{r}\e_0
  +C e^{-k_+X_2}x^{k_+},\qquad 
  \underline{v}_0=\dfrac{1-e^{-r(T-t)}}{r}\e_0
  -\left(C+\dfrac{\e_0}{r}\right) e^{-k_+X_2}x^{k_+},
$$
and
$$
  \overline{v}_1=\dfrac{1-e^{-r(T-t)}}{r}\e_1
  +C e^{-k_-X_1}x^{k_-},\qquad 
  \underline{v}_1=\dfrac{1-e^{-r(T-t)}}{r}\e_1
  -\left(C+\dfrac{\e_1}{r}\right) e^{-k_-X_1}x^{k_-},
$$
where $k_+$ and $k_-$ are respectively the positive and negative characteristic roots of ${\cal L}_\l^\prime$, i.e., the positive and negative roots of the following algebra equation
$$
 \dfrac{\t^2}{2}k(k-1)+ (\beta -r+\t^2 )k-r=0.
$$

It is not difficult to check that 
\begin{equation*}
    \begin{cases}
    &-\partial_t \overline{v}_0-{\cal L}_\l^\prime \overline{v}_0-\e_0=0,\quad -\partial_t \underline{v}_0-{\cal L}_\l^\prime \underline{v}_0-\e_0=0 \;\;\mbox{in}\;\;
    [0,T)\times(0,e^{X_2});\vspace{2mm}\\
    &-\partial_t \overline{v}_1-{\cal L}_\l^\prime \overline{v}_1-\e_1=0,\quad -\partial_t \underline{v}_1-{\cal L}_\l^\prime \underline{v}_1-\e_1=0 \;\;\mbox{in}\;\;
    [0,T)\times(e^{X_1},+\infty);\vspace{2mm}\\
    & \overline{v}_0(t,e^{X_2}),\, \overline{v}_1(t,e^{X_1})\geq C,\quad 
    \underline{v}_0(t,e^{X_2}),\, \underline{v}_1(t,e^{X_1})\leq -C,\;\;t\in[0,T];\vspace{2mm}\\
    & \overline{v}_0(T,\l),\, \overline{v}_1(T,\l)\geq 0,\qquad 
    \underline{v}_0(T,\l),\, \underline{v}_1(T,\l)\leq 0,\;\;\l\in(0,+\infty).
    \end{cases}
\end{equation*}
So, the comparison theory for PDEs implies that 
$$
 \underline{v}_0\leq {\cal Q}_0\leq \overline{v}_0\;\;\mbox{in}\;\;
 [0,T)\times(0,e^{X_2}),\quad  
 \underline{v}_1\leq {\cal Q}_1\leq \overline{v}_1\;\;\mbox{in}\;\;
 [0,T)\times(e^{X_1},+\infty),
$$
and 
$$
 \lim_{\l \to 0^+}\partial_{\l}{\cal Q}_0(t,\l)= \dfrac{1-e^{-r(T-t)}}{r}\e_0,\qquad 
 \lim_{\l \to +\infty}\partial_{\l}{\cal Q}_1(t,\l)= \dfrac{1-e^{-r(T-t)}}{r}\e_1.
$$
Since $\partial_{\l}{\cal Q}_1(t,\l)=\partial_{\l}{\cal Q}_0(t,\l)-\zeta_1 \;\;\mbox{for}\;\;0<\l \le \L_1(t)$, it is clear that 
\begin{equation*}
    \lim_{\l \to 0+}\partial_{\l}{\cal Q}_1(t,\l)= \dfrac{1-e^{-r(T-t)}}{r}\e_0-\zeta_1.
\end{equation*}

From the expression of ${\cal Q}_0$ in Conclusion (d) in Theorem \ref{property-Q}, we know that 
$$
 \lim_{\l \to \infty}\partial_{\l}{\cal Q}_0(t,\l)=\dfrac{1-e^{-r(T-t)}}{r}\e_0,\;\;\forall\;\;t\in[T-T_1,T].
$$
Moreover, since $\L_0(t)<+\infty$ for $t\in[0,T-T_1)$,  we know that
$\partial_\l{\cal Q}_0(t,\l)=\partial_\l{\cal Q}_1(t,\l)-\zeta_0$ for $\l \ge \L_0(t),t\in[0,T-T_1)$, and
$$\lim_{\l \to \infty}\partial_{\l}{\cal Q}_0(t,\l)=\dfrac{1-e^{-r(T-t)}}{r}\e_1-\zeta_0,\;\;\forall\;\;t\in[0,T-T_1).
$$
\end{proof}

\section{Optimal Strategies}\label{sec:strategy}

\subsection{Verification Theorem}

By Theorems \ref{th-relationship-Q-and-P}, \ref{th-u} and \ref{property-Q}, the PDE in \eqref{de:Q0-and-Q1} has a unique strong solution $({\cal Q}_0,{\cal Q}_1)$ such that ${\cal Q}_0,{\cal Q}_1\in W^{1,2}_{p,loc}({\cal N}_T)\cap C^{0,1}(\overline{\cal N}_T)$ with any $p\geq1$.  Moreover, $({\cal Q}_0, {\cal Q}_1)$ satisfies the system of VIs in \eqref{eq:HJB-1} and \eqref{eq:HJB-2}.

Now, we will demonstrate that the unique solutions $Q_0(0,\lambda)$ and $Q_1(0,\lambda)$ are equivalent to the solutions $J_S(0,\lambda)$ and $J_S(1,\lambda)$ of the optimal switching problem. To establish this equivalence, we first describe a candidate for the optimal job-switching strategy using the two free boundaries, $\L_0$ and $\L_1$.

Recall that the definition of four regions in Theorem \ref{property-Q}.
\begin{align*}
    \mathcal{WR}_0^{(t,\l)}=&\{(t,\l)\in{\cal N}_T:0<\l < \L_0(t)\},\qquad 
        \mathcal{SR}_0^{(t,\l)}=\{(t,\l)\in{\cal N}_T: \l\geq \L_0(t)\},\vspace{2mm}\\
        \mathcal{WR}_1^{(t,\l)} =&  \{(t,\l)\in{\cal N}_T:\;\l>\L_1(t)\},\;\;\;\;\;\;\;\;\;\quad
        \mathcal{SR}_1^{(t,\l)}=\{(t,\l)\in{\cal N}_T:  0<\l\le \L_1(t)\}.
\end{align*}

If the agent has an initial point $(0,\lambda)$ located within the region $\mathcal{SR}_j^{(t,\l)}$ ($j=0,1$), the agent immediately switches to job ${\cal D}_{1-j}$ and incurs a cost of $\zeta_j$. When the initial point $(0, \l)$ is located within the $\mathcal{WR}_1^{(t,\l)}$ region, as the dual process $\mathcal{Y}_t^\lambda$ decreases (indicating an increase in wealth) and touches the free boundary $\Lambda_1(t)$, the agent promptly switches to job ${\cal D}_0$, incurring a cost of $\zeta_1$ in order to reduce the disutility associated with labor. On the contrary, if the agent's initial point $(0, \l)$ is situated within $\mathcal{WR}_0^{(t,\l)}$, when the process ${\cal Y}_t^\lambda$ increases (indicating a decrease in wealth) and touches the free boundary $\Lambda_1(t)$, the agent switches to job $\mathcal{D}_1$, incurring a cost of $\zeta_0$ for the sake of higher income. However, if the time remaining until the mandatory retirement date is less than $T_1$, the agent {jobbing ${\cal D}_1$} never switches jobs. This is because the job cost exceeds the increase in the present value of income in such a scenario.

Now, let us proceed to express the explanation above in a mathematically rigorous manner. For a given initial condition $(j,\lambda)\in\{0,1\}\times(0,\infty)$, let us denote $\widehat{\Upsilon}^{(j,\l)}$ by
\begin{equation}
    \label{eq:optimal_job_state}
	\widehat{\Upsilon}_t^{(j,\l)}:= j{\bf 1}_{[0,\tau_1]}(t) + \sum_{n\ge 1} \left(1-\widehat{\Upsilon}^{(j,\l)}_{\tau_{n}}\right){\bf 1}_{(\tau_{n},\tau_{n+1}]}(t),
\end{equation}
where  $\widehat{\Upsilon}^{(j,\l)}_{\tau_{n+1}} \ne \widehat{\Upsilon}^{(j,\l)}_{\tau_{n}}  $ for $n\ge 0$ with $\tau_0=0$,
\begin{equation*}
	\tau_1 = \left\{\begin{array}{l} T\wedge\inf\{s > 0 : {\cal Y}_s^\l \ge  \Lambda_0(s)\} ~~\mbox{if}~ j = 0 ~\mbox{and}~ \lambda < \Lambda_0(0),\\
		0 ~~\mbox{if}~ j = 0 ~\mbox{and}~ \lambda \geq \Lambda_0(0),\\
		0 ~~\mbox{if}~ j = 1 ~\mbox{and}~ \lambda \leq \Lambda_1(0),\\
		T\wedge\inf\{s > 0 : {\cal Y}_s^\l \le \Lambda_1(s)\}~~\mbox{if}~ j = 1 ~\mbox{and}~ \lambda > \Lambda_1(0),\\ \end{array}\right.
\end{equation*}
and
\begin{equation*}
	\tau_{n+1} = \left\{\begin{array}{l} T\wedge\inf\{s > \tau_{n} : {\cal Y}_s^\l \ge \Lambda_0(s)\} ~~\mbox{if}~ \widehat{\Upsilon}^{(j,\l)}_{\tau_{n}} = 0\\
		T\wedge\inf\{s > \tau_{n} : {\cal Y}_s^\l \le  \Lambda_1(s)\}~~\mbox{if}~ \widehat{\Upsilon}^{(j,\l)}_{\tau_{n}} = 1\\ \end{array}\right. ~~\mbox{for}~~n \ge 1.
\end{equation*}
\vspace{2mm}

In the next lemma, we show that the constructed job-switching strategy $\widehat{\Upsilon}^{(j,\l)}$ belongs to the class $\Phi_j$.
\begin{lem}\label{lem:finite_number}
For given $j\in\{0,1\}$ and $\l_1>\l>\l_2>0$, 
    $$\widehat{\Upsilon}^{(j,\l)}\in \Phi_j.$$
    Moreover, there exists a constant $C_{\l_1}$ such that 
    $$   \mathbb{E}\left[\sup\limits_{\l\in[\l_2,\l_1]}\sum_{0\le t\le T}e^{-\b t}\left[\zeta_0{\cal Y}_t^\l (\D \widehat{\Upsilon}_t^{(j,\l)})^++\zeta_1{\cal Y}_t^\l (\D \widehat{\Upsilon}_t^{(j,\l)})^-\right]\right]\le C_{\l_1}.
    $$
\end{lem}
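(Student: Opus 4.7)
The first claim $\widehat{\Upsilon}^{(j,\l)}\in\Phi_j$ requires $\mathbb{F}$-adaptedness, c\`agl\`ad-ness, $\{0,1\}$-valuedness with initial value $j$, and finite variation on $[0,T]$. The first four properties are immediate from \eqref{eq:optimal_job_state}, which uses $\mathbb{F}$-stopping times $\tau_n$ and left-continuous indicators $\mathbf{1}_{(\tau_n,\tau_{n+1}]}$ starting at $j$. Finite variation is equivalent to a.s.\ finiteness of the switch count $N^{(j,\l)}:=\sum_{0\le t\le T}|\D\widehat{\Upsilon}^{(j,\l)}_t|$, which I will establish simultaneously with the integrability bound.

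The plan hinges on the deterministic separation $\L_1(t)\le e^{X_1}<e^{X_2}\le\L_0(t)$ on $[0,T)$ from Theorem~\ref{property-Q}(c). Writing $Z_t:=e^{\b t}{\cal H}_t$ so that ${\cal Y}^\l_t=\l Z_t$, at every switching time ${\cal Y}^\l_{\tau_n}\in\{\L_0(\tau_n),\L_1(\tau_n)\}$, so two consecutive switches $\tau_n<\tau_{n+1}$ force $\sup_{[\tau_n,\tau_{n+1}]}\ln Z-\inf_{[\tau_n,\tau_{n+1}]}\ln Z\ge|\ln Z_{\tau_n}-\ln Z_{\tau_{n+1}}|\ge c:=X_2-X_1>0$. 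Introducing
$$
n_c:=\sup\Big\{n\ge 0:\,\exists\ \text{disjoint }[a_i,b_i]\subset[0,T],\,i=1,\dots,n,\text{ with }\sup_{[a_i,b_i]}\ln Z-\inf_{[a_i,b_i]}\ln Z\ge c\Big\},
$$
the above yields the $\l$-uniform pathwise inequality $N^{(j,\l)}\le n_c+1$. Since $\ln Z_t=(\b-r-\t^2/2)t-\t B_t$ is a Brownian motion with constant drift, the successive $c$-oscillation stopping times $\sigma_0=0$, $\sigma_{k+1}:=\inf\{t>\sigma_k:\sup_{[\sigma_k,t]}\ln Z-\inf_{[\sigma_k,t]}\ln Z\ge c\}$ have iid increments by the strong Markov property, with Laplace transform $\mathbb{E}[e^{-\mu\sigma_1}]<1$ for any $\mu>0$. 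Hence $\mathbb{P}(n_c\ge n)\le\mathbb{P}(\sigma_n\le T)\le e^{\mu T}(\mathbb{E}[e^{-\mu\sigma_1}])^n$ decays geometrically, so $\mathbb{E}[n_c^p]<\infty$ for every $p\ge 1$. In particular, $N^{(j,\l)}<\infty$ a.s., and $\widehat{\Upsilon}^{(j,\l)}\in\Phi_j$.

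For the cost estimate, using $e^{-\b t}{\cal Y}^\l_t=\l{\cal H}_t\le\l_1{\cal H}_t$ when $\l\in[\l_2,\l_1]$,
$$
\sup_{\l\in[\l_2,\l_1]}\sum_{0\le t\le T}e^{-\b t}\big[\z_0{\cal Y}^\l_t(\D\widehat{\Upsilon}^{(j,\l)}_t)^+ + \z_1{\cal Y}^\l_t(\D\widehat{\Upsilon}^{(j,\l)}_t)^-\big]\le(\z_0+\z_1)\l_1\Big(\sup_{s\le T}{\cal H}_s\Big)(n_c+1),
$$
whose right-hand side is $\l$-independent. Taking expectation and applying the Cauchy--Schwarz inequality reduces matters to $\mathbb{E}[(\sup_{s\le T}{\cal H}_s)^2]$, bounded by $4e^{\t^2 T}$ via Doob's $L^2$ maximal inequality for the positive martingale $e^{rt}{\cal H}_t$, together with $\mathbb{E}[n_c^2]<\infty$ established above. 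This produces a constant $C_{\l_1}$ depending only on $\l_1$ and the model parameters. The principal obstacle will be the careful verification of the pathwise bound $N^{(j,\l)}\le n_c+1$ uniformly in $\l$ at borderline cases—$\tau_1=0$ when the initial point already lies in a stopping region, or a terminal switch occurring near $t=T-T_1$ (where $\L_0$ blows up) or near $t=T$—but these are handled by a direct case analysis on the recursive definition of $\tau_n$ in \eqref{eq:optimal_job_state}.
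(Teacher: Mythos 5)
Your proof is correct, but it follows a genuinely different route from the paper's. The paper compares $\widehat{\Upsilon}^{(j,\l)}$ with an auxiliary strategy $\eta^{(j,\l)}$ that switches at the constant levels $e^{X_1}<e^{X_2}$, then changes to the risk-neutral measure $\mathbb{Q}$ via Girsanov so that the weight $e^{-\b t}{\cal Y}_t^{\l}$ becomes $\l e^{-rt}$ times a martingale density; since ${\cal Y}^{\l}$ is a $\mathbb{Q}$-sub- or supermartingale, Doob's upcrossing inequality (Theorem 3.8 of Karatzas--Shreve) bounds $\mathbb{E}^{\mathbb{Q}}[\widehat{N}]$ by $\bigl(2\mathbb{E}^{\mathbb{Q}}[{\cal Y}_T^{\l_1}]+e^{X_2}\bigr)/(e^{X_2}-e^{X_1})$, and the pathwise comparison $N\le\widehat N$ (driven by the same separation $\L_1\le e^{X_1}<e^{X_2}\le\L_0$ that you use) finishes the argument. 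You instead stay under $\mathbb{P}$: you bound the switch count pathwise and $\l$-uniformly by $n_c+1$, where $n_c$ counts successive $c$-oscillations of the drifted Brownian motion $\ln Z_t=(\b-r-\t^2/2)t-\t B_t$ with $c=X_2-X_1$, show via the strong Markov property that $n_c$ has geometric tails (hence all moments), and absorb the state-price weight through $\sup_{s\le T}{\cal H}_s$, Cauchy--Schwarz and Doob's $L^2$ maximal inequality. Your route buys a fully pathwise, $\l$-independent domination of the number of switches — which makes the supremum over $\l\in[\l_2,\l_1]$ inside the expectation entirely transparent, arguably cleaner than the paper's handling — and yields moment bounds of every order on the switching count; the paper's route is shorter because the upcrossing inequality is off the shelf and gives an explicit first-moment constant. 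The only points needing care in your write-up are cosmetic: at $\tau_1=0$ the process may start strictly inside a stopping region, so the correct statement is ${\cal Y}^{\l}_{\tau_n}\ge e^{X_2}$ at up-switches and $\le e^{X_1}$ at down-switches (which is all your oscillation bound uses), and consecutive intervals $[\tau_n,\tau_{n+1}]$ share endpoints, so either pass to the open intervals (the oscillation is unchanged by continuity) or note that the greedy stopping-time induction $\sigma_k\le b_k$ works equally well for non-overlapping intervals; neither affects the argument.
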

\begin{proof}
    For given $j\in\{0,1\}$ and $\l\in[\l_2,\l_1]$, let us temporarily denote $\eta^{(j,\l)}$ by
    $$
    \eta_t^{(j,\l)}:= j{\bf 1}_{[0,\hat{\tau}_1]}(t) + \sum_{n\ge 1} \left(1-\eta^{(j,\l)}_{\hat{\tau}_{n}}\right){\bf 1}_{(\hat{\tau}_{n},\hat{\tau}_{n+1}]}(t)
    $$ 
    where $\eta_{\hat{\tau}_{n+1}}^{(j,\l)}\ne \eta_{\hat{\tau}_{n}}^{(j,\l)}$ for $n\ge 0$, $\hat{\tau}_0=0$, 
    \begin{equation*}
        \hat{\tau}_1=
        \left\{\begin{array}{l} T\wedge\inf\{s > 0 : {\cal Y}_s^\l \ge  e^{X_2}\} ~~\mbox{if}~ j = 0 ~\mbox{and}~ \lambda < e^{X_2},\\
		0 ~~\mbox{if}~ j = 0 ~\mbox{and}~ \lambda \geq e^{X_2},\\
		0 ~~\mbox{if}~ j = 1 ~\mbox{and}~ \lambda \leq e^{X_1},\\
		T\wedge\inf\{s > 0 : {\cal Y}_s^\l \le e^{X_1}\}~~\mbox{if}~ j = 1 ~\mbox{and}~ \lambda > e^{X_1},\\ \end{array}\right.
    \end{equation*}
    and
\begin{equation*}
	\hat{\tau}_{n+1} = \left\{\begin{array}{l} T\wedge\inf\{s > \hat{\tau}_{n} : {\cal Y}_s^\l \ge e^{X_2}\} ~~\mbox{if}~ \eta^{(j,\l)}_{\hat{\tau}_{n}} = 0\\
		T\wedge\inf\{s > \hat{\tau}_{n} : {\cal Y}_s^\l \le  e^{X_1}\}~~\mbox{if}~ \eta^{(j,\l)}_{\hat{\tau}_{n}} = 1\\ \end{array}\right. ~~\mbox{for}~~n \ge 1.
\end{equation*}
Recall that $X_1$ and $X_2$ are defined in \eqref{de:X1X2}.

For each the sequence of stopping time $\{\tau_n\}_{n\ge0}$ and $\{\hat{\tau}_n\}_{n\ge0}$, let us define the number of switching $N$ and $\widehat{N}$ by
$$
N=\inf\{n:\;\tau_n =T\}\;\;\mbox{and}\;\;\widehat{N}=\inf\{n:\;\hat{\tau}_n=T\},
$$
respectively.

Note that 
\begin{align*}
     &\mathbb{E}\left[\sup\limits_{\l\in[\l_2,\l_1]}\sum_{0\le t\le T}e^{-\b t}\left[\zeta_0{\cal Y}_t^\l (\D \widehat{\Upsilon}_t^{(j,\l)})^++\zeta_1{\cal Y}_t^\l (\D \widehat{\Upsilon}_t^{(j,\l)})^-\right]\right]\\
   = &\mathbb{E}\left[\sup\limits_{\l\in[\l_2,\l_1]}\sum_{0\le t\le T}e^{-\b t}\left[\zeta_0{\cal Y}_t^\l (\D \widehat{\Upsilon}_t^{(j,\l)})^++\zeta_1{\cal Y}_t^\l (\D \widehat{\Upsilon}_t^{(j,\l)})^-\right]\mathbb{E}\left[e^{-\frac{\t^2}{2}(T-t)-\t (B_T-B_t)}\right]\right]\\
   =&\mathbb{E}\left[\sup\limits_{\l\in[\l_2,\l_1]}\sum_{0\le t\le T}e^{-\b t}\left[\zeta_0{\cal Y}_t^\l (\D \widehat{\Upsilon}_t^{(j,\l)})^++\zeta_1{\cal Y}_t^\l (\D \widehat{\Upsilon}_t^{(j,\l)})^-\right]\mathbb{E}\left[e^{-\frac{\t^2}{2}(T-t)-\t (B_T-B_t)}\mid {\cal F}_t\right]\right]\\
   =&\mathbb{E}\left[\sup\limits_{\l\in[\l_2,\l_1]}\l\sum_{0\le t\le T}e^{-r t}\mathbb{E}\left[e^{-\frac{\t^2}{2}T-\t B_T}\left[\zeta_0 (\D \widehat{\Upsilon}_t^{(j,\l)})^++\zeta_1  (\D \widehat{\Upsilon}_t^{(j,\l)})^-\right]\mid {\cal F}_t\right]\right]\\
   =&\mathbb{E}\left[e^{-\frac{\t^2}{2}T-\t B_T}\sup\limits_{\l\in[\l_2,\l_1]}\l\sum_{0\le t\le T}e^{-r t} \left[\zeta_0 (\D \widehat{\Upsilon}_t^{(j,\l)})^++\zeta_1  (\D \widehat{\Upsilon}_t^{(j,\l)})^-\right] \right].
\end{align*}

Let us define an equivalent martingale measure $\mathbb{Q}$ by
\begin{equation}
    \dfrac{d\mathbb{Q}}{d\mathbb{P}} = e^{-\frac{\t^2}{2}T-\t B_T}.
\end{equation}
By Girsanov's theorem , $B_t^\mathbb{Q}:=B_t+\t t$ for $t\in[0,T]$ is the standard Brownian motion under the $\mathbb{Q}$-measure.

Then, we have
\begin{align*}
   &\mathbb{E}\left[\sup\limits_{\l\in[\l_2,\l_1]}\sum_{0\le t\le T}e^{-\b t}\left[\zeta_0{\cal Y}_t^\l (\D \widehat{\Upsilon}_t^{(j,\l)})^++\zeta_1{\cal Y}_t^\l (\D \widehat{\Upsilon}_t^{(j,\l)})^-\right]\right]\\
   \le & \mathbb{E}^\mathbb{Q}\left[\sup\limits_{\l\in[\l_2,\l_1]}\l\sum_{0\le t\le T}e^{-r t}\left[\zeta_0 (\D \widehat{\Upsilon}_t^{(j,\l)})^++\zeta_1  (\D \widehat{\Upsilon}_t^{(j,\l)})^-\right]\right] \\
   \le &\mathbb{E}^\mathbb{Q}\left[\sup\limits_{\l\in[\l_2,\l_1]}\l \left[(\zeta_0+\zeta_1)N\right]\right].
\end{align*}

Since $\Lambda_1(t)\le e^{X_1} < e^{X_2} \le \Lambda_0(t)$ for all $t\in[0,T-T_1)$ and $\Lambda_0(t)=\infty$ for $t\in[T-T_1,T]$, it is clear that 
$$
\mathbb{E}^\mathbb{Q}[N]\le \mathbb{E}^\mathbb{Q}[\widehat{N}].
$$
Note that 
$$
 {\cal Y}_t^\l = \l e^{(\b-r-\frac{1}{2}\t^2)t - \t B_t}=\l e^{(\b-r+\frac{1}{2}\t^2)t -\t B_t^\mathbb{Q}}=\l e^{(\b-r+\t^2)t}e^{-\frac{1}{2}\t^2 t-\t B_t^\mathbb{Q}}\,.
$$
Since
$$
\mathbb{E}_t^\mathbb{Q}\left[{\cal Y}_s^\l\right]={\cal Y}_t^\l\mathbb{E}_t^\mathbb{Q}\left[ e^{(\beta-r+\theta^2)(s-t)}\exp\left(-\t \left(B^\mathbb{Q}_s-B^\mathbb{Q}_t\right)-\frac{\t^2}{2}(s-t)\right)\right]
=e^{(\beta-r+\theta^2)(s-t)}{\cal Y}_t^\l,\;\forall\;0\leq t\leq s\leq T,
$$
the dual process ${\cal Y}^\l$ is a $\mathbb{F}$-sub-martingale when $(\b-r+\t^2)>0$ and a $\mathbb{F}$-super-martingale when $(\b-r+\t^2)\le 0$. 

By substituting ${\cal Y}^\l$ when $(\b-r+\t^2)>0$ and $-{\cal Y}^\l$ when $(\b-r+\t^2)\le 0$ into Theorem 3.8 on pp. 13-14 in \citet{KS2} instead of $X$, we easily deduce that 
\begin{equation*}
    \mathbb{E}^\mathbb{Q}[\widehat{N}]\le  \dfrac{2\mathbb{E}^\mathbb{Q}[{\cal Y}_T^\l]+e^{X_2}}{e^{X_2}-e^{X_1}} \le  \dfrac{2\mathbb{E}^\mathbb{Q}[{\cal Y}_T^{\l_1}]+e^{X_2}}{e^{X_2}-e^{X_1}} <\infty.
\end{equation*}
Hence, we can conclude that $\mathbb{E}^\mathbb{Q}[N]<\infty, N<\infty$ a.s. in $\Omega$, and $\widehat{\Upsilon}^{(j,\l)}\in\Phi_j$. Moreover, 
$$   \mathbb{E}\left[\sup\limits_{\l\in[\l_2,\l_1]}\sum_{0\le t\le T}e^{-\b t}\left[\zeta_0{\cal Y}_t^\l (\D \widehat{\Upsilon}_t^{(j,\l)})^++\zeta_1{\cal Y}_t^\l (\D \widehat{\Upsilon}_t^{(j,\l)})^-\right]\right]<C_{\l_1}.
$$
\end{proof}

By the construction of $\widehat{\Upsilon}$, we deduce that for given $\l>0$ 
\begin{equation}\label{eq:properties-candi-Upsilon}
   \begin{aligned}
        &\partial_t {\cal Q}_{\widehat{\Upsilon}_t^{(j,\l)}}(t, {\cal Y}_t^\l) + {\cal L}{\cal Q}_{\widehat{\Upsilon}_t^{(j,\l)}}(t,{\cal Y}_t^\l)+\e_{\widehat{\Upsilon}_t^{(j,\l)}} {\cal Y}_t^\l - L_{\widehat{\Upsilon}_t^{(j,\l)}} =0,\vspace{1mm}\\
    &\left[{\cal Q}_{\widehat{\Upsilon}_{t+}^{(j,\l)}}(t,{\cal Y}_t^\l)-{\cal Q}_{\widehat{\Upsilon}_{t}^{(j,\l)}}(t,{\cal Y}_t^\l)-\zeta_0 {\cal Y}_t^\l\right]\Delta (\widehat{\Upsilon}_t^{(j,\l)})^+ =0,\vspace{1mm}\\
    &\left[{\cal Q}_{\widehat{\Upsilon}_{t+}^{(j,\l)}}(t,{\cal Y}_t^\l)-{\cal Q}_{\widehat{\Upsilon}_{t}^{(j,\l)}}(t,{\cal Y}_t^\l)-\zeta_1 {\cal Y}_t^\l\right]\Delta (\widehat{\Upsilon}_t^{(j,\l)})^- =0,
   \end{aligned}
\end{equation}
for all $t\in[0,T]$, $\mathbb{P}$-a.s..

Now, we can state a verification theorem that provides the proof that ${\cal Q}_j$, which is a solution to the system of VIs in \eqref{eq:HJB-1} and \eqref{eq:HJB-2}, is indeed the same as the solution to the optimal switching problem in \eqref{eq:OSP}.
\begin{theorem}[Verification Theorem]\label{verification theorem}
    For $j=0,1$ and $\l>0$, 
    $$
    J_S(j, \l) = {\cal Q}_j (0,\l),\;\;\;\forall\;\l>0,
    $$
    where $({\cal Q}_0,{\cal Q}_1)$ is the unique strong solution of the PDE in \eqref{de:Q0-and-Q1}. Moreover, the job indicator process $\widehat{\Upsilon}_t^{(j,\l)}$ defined in \eqref{eq:optimal_job_state} is the solution to the optimal switching problem in \eqref{eq:OSP}, i.e.,
    \begin{align*}
        J_S(j,\l) =& \mathbb{E}\left[\int_0^T e^{-\b t}\left(\left({\cal Y}_t^\l \e_0-L_0\right)(1-\widehat{\Upsilon}_t^{(j,\l)})+\left({\cal Y}_t^\l \e_1-L_1 \right)\widehat{\Upsilon}_t^{(j,\l)} \right)dt\right]\\-& \mathbb{E}\left[\sum_{0 \leq t \le T}e^{-\b t}[\z_0{\cal Y}_t^\l (\Delta \widehat{\Upsilon}_t^{(j,\l)})^+ + \z_1{\cal Y}_t^{\l}(\Delta \widehat{\Upsilon}_t^{(j,\l)})^- ]\right]
    \end{align*}
\end{theorem}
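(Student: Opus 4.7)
The plan is to apply a generalized It\^o formula to the c\`agl\`ad process $e^{-\b t}\cQ_{\Upsilon_t}(t,\cY_t^\l)$ for an arbitrary $\Upsilon\in\Phi_j$, use the variational inequalities \eqref{eq:HJB-1}--\eqref{eq:HJB-2} together with the two obstacle inequalities to obtain $\cQ_j(0,\l)\geq J_S(j,\l)$, and then verify that the candidate $\widehat{\Upsilon}^{(j,\l)}$ of \eqref{eq:optimal_job_state} saturates every inequality thanks to the complementary-slackness relations \eqref{eq:properties-candi-Upsilon}. Because $\cQ_0,\cQ_1\in W^{1,2}_{p,loc}(\cN_T)\cap C^{0,1}(\overline{\cN}_T)$ by Theorem \ref{property-Q}, I would invoke Krylov's It\^o formula for Sobolev functions on each continuity interval $[\tau_n,\tau_{n+1})$ of $\Upsilon$, after localizing by a sequence $\sigma_k\uparrow T$ that keeps $\cY_s^\l$ in a compact subset of $(0,\infty)$. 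Since $\Upsilon\in\Phi_j$ is a finite-variation $\{0,1\}$-valued process, it has only countably many jumps, which can be enumerated $\{\tau_n\}$ as they occur.

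On each inter-jump interval $\Upsilon\equiv i$ is constant, so Krylov's formula gives
\begin{equation*}
e^{-\b\tau_{n+1}}\cQ_{i}(\tau_{n+1},\cY_{\tau_{n+1}}^\l)-e^{-\b\tau_n}\cQ_{i}(\tau_n,\cY_{\tau_n}^\l)=\int_{\tau_n}^{\tau_{n+1}}e^{-\b s}\bigl[\partial_s\cQ_i+\cL\cQ_i\bigr](s,\cY_s^\l)\,ds+\mathcal{M}_n,
\end{equation*}
with $\mathcal{M}_n$ a local martingale driven by $B$. Summing over $n$ and inserting the jump corrections $e^{-\b\tau_n}[\cQ_{\Upsilon_{\tau_n+}}-\cQ_{\Upsilon_{\tau_n}}](\tau_n,\cY_{\tau_n}^\l)$, invoking the terminal condition $\cQ_i(T,\cdot)=0$, and then using $\partial_s\cQ_i+\cL\cQ_i\le -(\e_i\cY_s^\l-L_i)$ from the VI together with $\cQ_{1-i}(\tau_n,\cY_{\tau_n}^\l)-\cQ_i(\tau_n,\cY_{\tau_n}^\l)\le \z_i\cY_{\tau_n}^\l$ at every jump, I obtain after taking expectations (and justifying the vanishing of the localized martingale via the bounds $|\cQ_i|\le C_T\l$ from Theorem \ref{th-relationship-Q-and-P}, $|\partial_\l\cQ_i|\le C$ from Theorem \ref{property-Q}, and the integrability of $\cY^\l$ as a geometric Brownian motion)
\begin{equation*}
\cQ_j(0,\l)\ge \mathbb{E}\!\left[\int_0^T\! e^{-\b t}\bigl((\e_0\cY_t^\l-L_0)(1-\Upsilon_t)+(\e_1\cY_t^\l-L_1)\Upsilon_t\bigr)dt-\!\!\sum_{0\le t\le T}\!\!e^{-\b t}\bigl[\z_0\cY_t^\l(\D\Upsilon_t)^++\z_1\cY_t^\l(\D\Upsilon_t)^-\bigr]\right].
\end{equation*}
Taking the supremum over $\Upsilon\in\Phi_j$ yields $\cQ_j(0,\l)\ge J_S(j,\l)$; the cost-sum integrability used when passing to the limit is provided by Lemma \ref{lem:finite_number} applied with any fixed $\l_1>\l$.

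For the reverse inequality I substitute $\Upsilon=\widehat{\Upsilon}^{(j,\l)}$, which by Lemma \ref{lem:finite_number} lies in $\Phi_j$ with an integrable switching-cost sum. The three identities in \eqref{eq:properties-candi-Upsilon} state precisely that each inequality used above becomes an equality along the candidate trajectory: the continuous drift equals $-(\e_{\widehat{\Upsilon}_s}\cY_s^\l-L_{\widehat{\Upsilon}_s})$ almost everywhere, and each jump contributes exactly $\z_0\cY_{\tau_n}^\l(\D\widehat{\Upsilon}_{\tau_n})^++\z_1\cY_{\tau_n}^\l(\D\widehat{\Upsilon}_{\tau_n})^-$. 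Running the same It\^o expansion therefore produces an equality, so $\cQ_j(0,\l)=J_S(j,\l)$ and $\widehat{\Upsilon}^{(j,\l)}$ attains the supremum.

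The main technical obstacle is the rigorous use of It\^o's formula when $\cQ_j$ has only Sobolev regularity and $\Upsilon$ is a finite-variation pure-jump modifier of the state; I would handle it either by mollifying $(\cQ_0^\varepsilon,\cQ_1^\varepsilon)$ in the spirit of the approximation built in the proof of Theorem \ref{property-Q}, applying the classical It\^o formula to the smooth $\cQ_i^\varepsilon$, and passing to the $W^{2,1}_p$-limit using the bounds of Theorem \ref{property-Q}, or by decomposing the trajectory into its finitely (resp.\ countably) many continuity intervals and invoking Krylov's formula on each while handling the jumps by hand. A secondary, but routine, difficulty is demonstrating that the stochastic-integral part is a true martingale after localization; this follows from the linear-growth bound on $\cQ_i$ in Theorem \ref{th-relationship-Q-and-P}, the uniform bound on $\partial_\l\cQ_i$ in Theorem \ref{property-Q}, and standard $L^p$-estimates for $\cY^\l$.
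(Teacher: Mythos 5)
Your proposal is correct and follows essentially the same route as the paper: an It\^o--Tanaka/Krylov expansion of $e^{-\b t}{\cal Q}_{\Upsilon_t}(t,{\cal Y}_t^\l)$ for arbitrary $\Upsilon\in\Phi_j$, using the variational inequalities and the obstacle bounds to get ${\cal Q}_j(0,\l)\ge J_S(j,\l)$, then saturating all inequalities along $\widehat{\Upsilon}^{(j,\l)}$ via \eqref{eq:properties-candi-Upsilon} and Lemma \ref{lem:finite_number}. The only differences are presentational (explicit localization and interval-by-interval treatment of jumps versus the paper's single It\^o--Tanaka application with jump-correction terms), not substantive.
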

\begin{proof}

Let us denote $\Gamma_t$ by 
\begin{equation}
\begin{aligned}
        \Gamma_t:=&\int_0^t e^{-\b s}\left(\left({\cal Y}_s^\l \e_0-L_0\right)(1-\Upsilon_s)+\left({\cal Y}_s^\l \e_1-L_1 \right)\Upsilon_s \right)ds\\
        -& \left(\zeta_0\sum_{0 \leq s \le t}e^{-\beta s}{\cal Y}_s^\l(\Delta \Upsilon_s^{\lambda})^+ +\zeta_1\sum_{0 \leq s\le t}e^{-\beta s}{\cal Y}_s^\l(\Delta \Upsilon_s^{\lambda})^-\right)+ e^{-\b t} {\cal Q}_{\Upsilon_t}(t,{\cal Y}_t^\l).
\end{aligned}
\end{equation}

Since ${\cal Q}_0,{\cal Q}_1\in  W^{1,2}_{p,loc}({\cal N}_T)\cap C^{0,1}(\overline{\cal N}_T)$ with any $p\geq1$, we can apply It\^{o}-Tanaka formula to $e^{-\b t}{\cal Q}(\Upsilon_t,{\cal Y}_t^\l)$ for any $\Upsilon\in \Psi_j$. Hence, we deduce that 
\begin{equation}
    \begin{aligned}
        d\Gamma_t=& e^{-\b t}\left(\left({\cal Y}_t^\l \e_0-L_0\right)(1-\Upsilon_t)+\left({\cal Y}_t^\l \e_1-L_1 \right)\Upsilon_t \right)dt+e^{-\b t}d{\cal Q}_{\Upsilon_t}(t,{\cal Y}_t^\l) - \b {\cal Q}_{\Upsilon_t}(t,{\cal Y}_t^\l)dt \\
        +&e^{-\b t}\left[{\cal Q}_{\Upsilon_{t+}}(t,{\cal Y}_t^\l)-{\cal Q}_{\Upsilon_{t}}(t,{\cal Y}_t^\l)-\zeta_0{\cal Y}_t^\l\right]{\rm\bf I}_{\{\D \Upsilon_t =1\}}+e^{-\b t}\left[{\cal Q}_{\Upsilon_{t+}}(t,{\cal Y}_t^\l)-{\cal Q}_{\Upsilon_{t}}(t,{\cal Y}_t^\l)-\zeta_1{\cal Y}_t^\l\right]{\rm\bf I}_{\{\D \Upsilon_t =-1\}}\\
        =&e^{-\b t}\left(\partial_t{\cal Q}_{\Upsilon_t}(t,{\cal Y}_t^\l)+{\cal L}{\cal Q}_{\Upsilon_t}(t,{\cal Y}_t^\l)+ \e_{\Upsilon_t}{\cal Y}_t^\l - L_{\Upsilon_t}\right)dt - \t {\cal Y}_t^\l \partial_{\l}{\cal Q}_{\Upsilon_t}(t,{\cal Y}_t^\l)dB_t\\
        +&e^{-\b t}\left[{\cal Q}_{\Upsilon_{t+}}(t,{\cal Y}_t^\l)-{\cal Q}_{\Upsilon_{t}}(t,{\cal Y}_t^\l)-\zeta_0{\cal Y}_t^\l\right]{\rm\bf I}_{\{\D \Upsilon_t =1\}}+e^{-\b t}\left[{\cal Q}_{\Upsilon_{t+}}(t,{\cal Y}_t^\l)-{\cal Q}_{\Upsilon_{t}}(t,{\cal Y}_t^\l)-\zeta_1{\cal Y}_t^\l\right]{\rm\bf I}_{\{\D \Upsilon_t =-1\}}.
    \end{aligned}
\end{equation}
This yields that 
\begin{equation}
\begin{aligned}\label{eq:Ito}
    \Gamma_T=& {\cal Q}_j(0, \l) + \underbrace{\int_0^T e^{-\b t} \left(\partial_t{\cal Q}_{\Upsilon_t}(t,{\cal Y}_t^\l)+{\cal L}{\cal Q}_{\Upsilon_t}(t,{\cal Y}_t^\l)+\e_{\Upsilon_t}{\cal Y}_t^\l -L_{\Upsilon_t}\right)dt}_{\bf (I)}  \\
    +&\underbrace{\sum_{0\le t \le T}e^{-\b t}\left[{\cal Q}_{\Upsilon_{t+}}(t,{\cal Y}_t^\l)-{\cal Q}_{\Upsilon_{t}}(t,{\cal Y}_t^\l)-\zeta_0{\cal Y}_t^\l\right](\Delta \Upsilon_t)^+}_{\bf (II)} \\
    +&\underbrace{\sum_{0\le t \le T}e^{-\b t}\left[{\cal Q}_{\Upsilon_{t+}}(t,{\cal Y}_t^\l)-{\cal Q}_{\Upsilon_{t}}(t,{\cal Y}_t^\l)-\zeta_1{\cal Y}_t^\l\right](\Delta \Upsilon_t)^-}_{\bf (III)} -\underbrace{\t\int_0^T e^{-\b t}{\cal Y}_t^\l \partial_{\l}{\cal Q}_{\Upsilon_t}(t,{\cal Y}_t^\l)dB_t}_{\bf (IV)}.
\end{aligned}
\end{equation}

Since $({\cal Q}_0, {\cal Q}_1)$ satisfies the system of VIs \eqref{eq:HJB-1} and \eqref{eq:HJB-2}, 
\begin{equation}\label{eq:expectation1}
    \mathbb{E}\left[{\bf (I)}\right]\le 0,\;\mathbb{E}\left[{\bf (II)}\right]\le 0,\;\;\mbox{and}\;\;\mathbb{E}\left[{\bf (III)}\right]\le 0.
\end{equation}
Moreover, the boundedness of $\partial_\l {\cal Q}_0$ and $\partial_\l {\cal Q}_1$ in Theorem \ref{property-Q} implies that 
$$
\mathbb{E}\left[\int_0^T \left(\t e^{-\b t}{\cal Y}_t^\l \partial_\l {\cal Q}_{\Upsilon_t}(t,{\cal Y}_t^\l)\right)^2 dt\right]<\infty.
$$
Hence, 
\begin{equation}\label{eq:expectation2}
    \mathbb{E}\left[{\bf (IV)}\right]=0. 
\end{equation}

By taking the expectation on the both side of the equation \eqref{eq:Ito}, it follows from \eqref{eq:expectation1} and \eqref{eq:expectation2} that 
\begin{equation}\label{eq:inequality-sub-op}
    \begin{aligned}
           {\cal Q}_j(0,\l)\ge \mathbb{E}[\Gamma_T]=&\mathbb{E}\left[\int_0^T e^{-\b t}\left(\left({\cal Y}_t^\l \e_0-L_0\right)(1-\Upsilon_t)+\left({\cal Y}_t^\l \e_1-L_1 \right)\Upsilon_t \right)dt\right.\\
        -&\left. \left(\zeta_0\sum_{0 \leq t \le T}e^{-\beta t}{\cal Y}_t^\l(\Delta \upsilon_t^{\lambda})^+ +\zeta_1\sum_{0 \leq t\le T}e^{-\beta t}{\cal Y}_t^\l(\Delta \Upsilon_t^{\lambda})^-\right)\right],
    \end{aligned}
\end{equation}
where we have used the fact ${\cal Q}_j(T,\l)=0$ for all $\l>0$.

Since the inequality in \eqref{eq:inequality-sub-op} holds for all $\Upsilon\in\Psi_j$, we deduce that 
\begin{equation}\label{eq:Q>P}
    {\cal Q}_j(0,\l) \ge J_S(j,\l).
\end{equation}

On the other hand, when substituting $\widehat{\Upsilon}_t^{(j,\l)}\in \Phi_j$ (see Lemma \ref{lem:finite_number}) for $\Upsilon_t$ into \eqref{eq:Ito}, it follows from the properties of $\widehat{\Upsilon}^{(j,\l)}$ as stated in \eqref{eq:properties-candi-Upsilon} that
\begin{equation}
     \mathbb{E}\left[{\bf (I)}\right]=\mathbb{E}\left[{\bf (II)}\right]=\mathbb{E}\left[{\bf (III)}\right]=\mathbb{E}\left[{\bf (IV)}\right]= 0.
\end{equation}
This yields that 
\begin{equation}\label{eq:Q<P}
    \begin{aligned}
    {\cal Q}_j(0,\l) = & \mathbb{E}\left[\int_0^T e^{-\b t}\left(\left({\cal Y}_t^\l \e_0-L_0\right)(1-\widehat{\Upsilon}_t^{(j,\l)})+\left({\cal Y}_t^\l \e_1-L_1 \right)\widehat{\Upsilon}_t^{(j,\l)} \right)dt\right]\\-& \mathbb{E}\left[\sum_{0 \leq t \le T}e^{-\b t}[\z_0{\cal Y}_t^\l (\Delta \widehat{\Upsilon}_t^{(j,\l)})^+ + \z_1{\cal Y}_t^{\l}(\Delta \widehat{\Upsilon}_t^{(j,\l)})^- ]\right]\\
    \le& J_S(j,\l).
    \end{aligned}
\end{equation}
By two inequalities \eqref{eq:Q<P} and \eqref{eq:Q>P}, we conclude that for $j=0,1,$
$$J_S(j,\l)={\cal Q}_j(0,\l).$$
Consequently, $\widehat{\Upsilon}^{(j,\l)}$ is the optimal.
\end{proof}

For given $j\in\{0,1\}$, let us denote $\widehat{\cal Q}_j(t,\l)$ by 
\begin{equation}\label{eq:dual-t-value}
    \widehat{\cal Q}_j(t,\l):= {\cal Q}_R(t,\l) + {\cal Q}_j(t,\l).
\end{equation}
Note that for all $\l>0$
\begin{equation*}
    J(j,\l) = \widehat{\cal Q}_j(0,\l)
\end{equation*}
and
\begin{equation}\label{eq:dual-expectation}
    \begin{aligned}
        J(j,\l)=&\mathbb{E}\left[\int_0^T e^{-\b t}\left(\widetilde{U}_1(t,{\cal Y}_t^\l)+\left({\cal Y}_t^\l \e_0-L_0\right)(1-\widehat{\Upsilon}_t^{(j,\l)})+\left(\widetilde{U}_1(t,{\cal Y}_t^\l)+{\cal Y}_t^\l \e_1-L_1 \right)\widehat{\Upsilon}_t^{(j,\l)} \right)dt\right]\\+&\mathbb{E}[e^{-\b T}\widetilde{U}_2(T,{\cal Y}_T^\l)]- \mathbb{E}\left[\sum_{0 \leq t \le T}e^{-\b t}[\z_0{\cal Y}_t^\l (\Delta \widehat{\Upsilon}_t^{(j,\l)})^+ + \z_1{\cal Y}_t^{\l}(\Delta \widehat{\Upsilon}_t^{(j,\l)})^- ]\right].
    \end{aligned}
\end{equation}

From Proposition \ref{pro:unconstrained}, Theorem \ref{property-Q}, and Lemma \ref{lem:limit-Q0Q1}, we can summarize the properties of $\widehat{\cal Q}_0$ and $\widehat{\cal Q}_1$ in the following lemma.
\begin{lem}\label{lem:properties-Q}
    For $j\in\{0,1\}$, $\widehat{\cal Q}_j(t,\l)$ satisfies the following properties.
    \begin{itemize}
        \item[(a)] $\widehat{\cal Q}_0,\widehat{\cal Q}_1\in W^{1,2}_{p,loc}({\cal N}_T)\cap C^{0,1}(\overline{\cal N}_T)$ and 
$$\widehat{\cal Q}_0\in C^\infty\left(\;\overline{\mathcal{WR}_0^{(t,\l)}}\;\right)\cap C^\infty\left(\;\overline{\mathcal{SR}_0^{(t,\l)}}\;\right),\;\;
\widehat{\cal Q}_1\in C^\infty\left(\;\overline{\mathcal{WR}_1^{(t,\l)}}\;\right)\cap C^\infty\left(\;\overline{\mathcal{SR}_1^{(t,\l)}}\setminus\{(T,0)\}\;\right),$$
        \item[(b)] $\partial_{\l\l}\widehat{\cal Q}_0,\,\partial_{\l\l}\widehat{\cal Q}_1>0$ in ${\cal N}_T$.
        \item[(c)]
        $$
        \lim_{\l \to 0+}\partial_\l \widehat{\cal Q}_1(t,\l)=-\infty,\;\;\lim_{\l \to 0+}\partial_\l \widehat{\cal Q}_0(t,\l)=-\infty,\;\;\lim_{\l \to +\infty} \partial_{\l} \widehat{\cal Q}_1(t,\l)= \frac{\epsilon_1(1-e^{-r(T-t)})}{r}.
        $$
        and 
        $$
        \lim_{\l \to +\infty} \partial_{\l} \widehat{\cal Q}_0(t,\l)= \left[\frac{\epsilon_1(1-e^{-r(T-t)})}{r}-\zeta_0\right]{\bf 1}_{\{t\in[0,T-T_1)\}}+\left[\dfrac{\e_0(1-e^{-r(T-t)})}{r}\right]{\bf 1}_{\{t\in[T-T_1,T]\}}.
        $$
    \end{itemize}
\end{lem}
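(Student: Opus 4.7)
The strategy is straightforward: all three conclusions follow by combining the already-established properties of ${\cal Q}_R$ from Proposition \ref{pro:unconstrained} with those of ${\cal Q}_j$ from Theorem \ref{property-Q} and Lemma \ref{lem:limit-Q0Q1}, using only the additivity in the definition $\widehat{\cal Q}_j = {\cal Q}_R + {\cal Q}_j$.

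For part (a), I would note that Proposition \ref{pro:unconstrained}(a) gives ${\cal Q}_R\in C^\infty(\overline{\cal N}_T)$, while Theorem \ref{property-Q} states that ${\cal Q}_0,{\cal Q}_1\in W^{1,2}_{p,loc}({\cal N}_T)\cap C^{0,1}(\overline{\cal N}_T)$ with the appropriate piecewise $C^\infty$ regularity on the closures of the switching and waiting regions. Since the $C^\infty$ regularity of ${\cal Q}_R$ is global, summing preserves the weaker regularity of ${\cal Q}_j$ on each respective region.

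For part (b), I would argue that Proposition \ref{pro:unconstrained}(b) gives $\partial_{\l\l}{\cal Q}_R>0$ strictly on all of $\overline{\cal N}_T$. On the other hand, by Theorem \ref{property-Q}(f), $\partial_{\l\l}{\cal Q}_0\geq0$ on ${\cal N}_T$ (strictly positive on ${\cal N}_{T-T_1}$ and equal to $0$ on $[T-T_1,T)\times(0,\infty)$ by the explicit linear form \eqref{expression-Q0}), and similarly $\partial_{\l\l}{\cal Q}_1\geq0$ everywhere. Adding these, the strictly positive contribution from $\partial_{\l\l}{\cal Q}_R$ dominates and yields $\partial_{\l\l}\widehat{\cal Q}_j>0$ throughout ${\cal N}_T$ for $j=0,1$.

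For part (c), I would write $\partial_\l \widehat{\cal Q}_j=\partial_\l {\cal Q}_R+\partial_\l {\cal Q}_j$ and pass to the limits independently. As $\l\to 0^+$, Proposition \ref{pro:unconstrained}(c) gives $\partial_\l{\cal Q}_R(t,\l)\to-\infty$, while Theorem \ref{property-Q}(e) bounds $|\partial_\l{\cal Q}_j|\leq C$ uniformly, so the sum tends to $-\infty$. As $\l\to+\infty$, again by Proposition \ref{pro:unconstrained}(c), $\partial_\l{\cal Q}_R(t,\l)\to 0$, and Lemma \ref{lem:limit-Q0Q1} supplies the explicit limits of $\partial_\l {\cal Q}_0$ and $\partial_\l {\cal Q}_1$; adding $0$ to those gives precisely the stated expressions, including the $t$-dependent dichotomy at $t=T-T_1$ arising from whether $\L_0(t)$ is finite. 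No step presents a genuine obstacle: the proof is a bookkeeping exercise that assembles pointwise properties term by term, relying crucially on the strict convexity of ${\cal Q}_R$ to upgrade the possibly degenerate convexity of ${\cal Q}_j$ to strict convexity of $\widehat{\cal Q}_j$ on the whole domain.
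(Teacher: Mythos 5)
Your proposal is correct and follows exactly the route the paper intends: the paper states this lemma as a direct consequence of Proposition \ref{pro:unconstrained}, Theorem \ref{property-Q}, and Lemma \ref{lem:limit-Q0Q1}, without writing out the bookkeeping, and your term-by-term assembly of $\widehat{\cal Q}_j={\cal Q}_R+{\cal Q}_j$ (including using strict convexity of ${\cal Q}_R$ to absorb the degenerate convexity of ${\cal Q}_j$, and the bound $|\partial_\l{\cal Q}_j|\le C$ for the $\l\to0^+$ limit) is precisely that omitted argument.
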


\subsection{Duality Theorem and Optimal Strategies}

In this subsection, we establish the duality theorem and derive the optimal switching, consumption, and investment strategies.

The following lemma is crucial for establishing the duality theorem.
\begin{lem}\label{lem:budget-equality} For given $j\in\{0,1\}$ and $\l>0$, the following equality holds.
    \begin{equation}
      \begin{aligned}
           -\partial_{\l}J(j,\l)=&\mathbb{E}\left[\int_0^T{\cal H}_t \left({\cal I}_1(t,{\cal Y}_t^\l) - \epsilon_0 (1-\widehat{\Upsilon}_t^{(t,\l)}) - \epsilon_1 \widehat{\Upsilon}_t^{(t,\l)}\right)dt +\sum_{0 \leq t \le T}[\z_0{\cal H}_t(\Delta \widehat{\Upsilon}_t^{(t,\l)})^+ + \z_1{\cal H}_t(\Delta \widehat{\Upsilon}_t^{(t,\l)})^- ]\right.\\&\;\;\;\;\;+\left. {\cal H}_T {\cal I}_2(T,{\cal Y}_T^\l) \right],
      \end{aligned}
    \end{equation}
    where ${\cal I}_i(t,\cdot)$ are the inverse function of $\partial_c U_i(t,\cdot)$, and $\widehat{\Upsilon}^{(t,\l)}$ is defined in \eqref{eq:optimal_job_state}.
\end{lem}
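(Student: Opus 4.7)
The natural route is an envelope / sub-tangent argument. Writing $g(\lambda';\Upsilon):=\mathcal{J}(j,\lambda';\Upsilon)$, we have $J(j,\lambda)=\sup_{\Upsilon\in\Phi_j}g(\lambda;\Upsilon)=g(\lambda;\widehat{\Upsilon}^{(j,\lambda)})$ by the verification theorem. Since ${\cal Y}_t^{\lambda'}=(\lambda'/\lambda){\cal Y}_t^{\lambda}$ is linear in $\lambda'$, $\widetilde{U}_i(t,y)$ is convex in $y$ with $\partial_y\widetilde{U}_i(t,y)=-{\cal I}_i(t,y)$, and all remaining $\lambda'$-dependent terms are linear, the map $\lambda'\mapsto g(\lambda';\Upsilon)$ is convex for every $\Upsilon\in\Phi_j$. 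Differentiating formally under the expectation and using $e^{-\beta t}\partial_{\lambda'}{\cal Y}_t^{\lambda'}={\cal H}_t$ yields
$$
 \partial_{\lambda'}g(\lambda';\Upsilon)=-\mathbb{E}\!\left[\int_0^T{\cal H}_t\bigl({\cal I}_1(t,{\cal Y}_t^{\lambda'})-\epsilon_0(1-\Upsilon_t)-\epsilon_1\Upsilon_t\bigr)dt+{\cal H}_T{\cal I}_2(T,{\cal Y}_T^{\lambda'})\right]-\mathbb{E}\!\left[\sum_{0\le t\le T}\bigl[\zeta_0{\cal H}_t(\Delta\Upsilon_t)^{+}+\zeta_1{\cal H}_t(\Delta\Upsilon_t)^{-}\bigr]\right],
$$
whose negative, evaluated at $\lambda'=\lambda$ and $\Upsilon=\widehat{\Upsilon}^{(j,\lambda)}$, is exactly the right-hand side of the lemma.

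To legitimise the differentiation inside the expectation, the plan is to exploit Assumption \ref{as:utility}: the bound $\limsup_{c\to\infty}\partial_cU_i(t,c)c^{k_i}\le C_i$ together with the Inada conditions translates into power-type estimates on ${\cal I}_i(t,y)$ at $y\to 0^{+}$ and $y\to\infty$, which, combined with the lognormal law of ${\cal Y}_t^{\lambda}$, furnish a $\lambda'$-uniform (on a small neighbourhood of $\lambda$) dominating envelope for the smooth part of the integrand. For the jump sum the necessary uniform $L^{1}$ estimate is exactly what Lemma \ref{lem:finite_number} supplies once one observes that $e^{-\beta t}{\cal Y}_t^{\lambda'}=\lambda'{\cal H}_t$ scales linearly in $\lambda'$, so the supremum in $\lambda'$ on $[\lambda/2,2\lambda]$ transfers directly to the sum $\sum{\cal H}_t[(\Delta\widehat{\Upsilon}_t^{(j,\lambda')})^{+}+(\Delta\widehat{\Upsilon}_t^{(j,\lambda')})^{-}]$. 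Dominated convergence then justifies both the interchange $\partial_{\lambda'}\leftrightarrow\mathbb{E}$ and the termwise differentiation of the countable jump sum.

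With the derivative identity in hand for fixed $\Upsilon$, convexity of $g(\cdot;\widehat{\Upsilon}^{(j,\lambda)})$ gives
$$
 J(j,\lambda')\ge g(\lambda';\widehat{\Upsilon}^{(j,\lambda)})\ge g(\lambda;\widehat{\Upsilon}^{(j,\lambda)})+\partial_{\lambda'}g(\lambda;\widehat{\Upsilon}^{(j,\lambda)})(\lambda'-\lambda)=J(j,\lambda)+\partial_{\lambda'}g(\lambda;\widehat{\Upsilon}^{(j,\lambda)})(\lambda'-\lambda)
$$
for every $\lambda'>0$, so $\partial_{\lambda'}g(\lambda;\widehat{\Upsilon}^{(j,\lambda)})$ is a subgradient of the convex map $J(j,\cdot)=\widehat{\cal Q}_j(0,\cdot)$ at $\lambda$ (convexity of $\widehat{\cal Q}_j(0,\cdot)$ being inherited from Proposition \ref{pro:unconstrained}(b) and Theorem \ref{property-Q}(f)). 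Lemma \ref{lem:properties-Q}(a) gives $\widehat{\cal Q}_j\in C^{0,1}(\overline{\cal N}_T)$, so $J(j,\cdot)$ is continuously differentiable, its subdifferential at $\lambda$ collapses to the singleton $\{\partial_\lambda J(j,\lambda)\}$, and the identity of the lemma follows upon negating. The principal technical obstacle is the dominated-convergence step above, and specifically the uniform-in-$\lambda'$ control of the switching-cost sum, which is precisely why Lemma \ref{lem:finite_number} places the supremum over $\lambda$ inside the expectation.
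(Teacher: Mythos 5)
Your proposal is correct, and it reaches the identity by a genuinely different route than the paper. The paper perturbs the multiplier and works with the optimizers at the perturbed points: using strict convexity of $J(j,\cdot)$ it writes the minimizer inequality for $\xi\mapsto J(j,\xi)-\partial_\lambda J(j,\lambda)\xi$, then sandwiches $-\partial_\lambda J(j,\lambda)$ between the difference quotients of $\mathcal{J}(j,\cdot\,;\widehat{\Upsilon}^{(j,\lambda\pm\varepsilon)})$ taken along the strategies optimal for $\lambda\pm\varepsilon$, and finally sends $\varepsilon\downarrow 0$ by dominated convergence; this is exactly where the $\sup$-over-$\lambda$ inside the expectation in Lemma \ref{lem:finite_number} is needed, and the limit implicitly also requires that the switching times of $\widehat{\Upsilon}^{(j,\lambda\pm\varepsilon)}$ converge to those of $\widehat{\Upsilon}^{(j,\lambda)}$. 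You instead freeze the optimizer at $\lambda$: you differentiate the convex map $\lambda'\mapsto\mathcal{J}(j,\lambda';\widehat{\Upsilon}^{(j,\lambda)})$ (the switching-cost and income terms being exactly linear in $\lambda'$, only the $\widetilde{U}_i$ difference quotients need a dominating envelope from Assumption \ref{as:utility} and lognormal moments), use $J(j,\lambda)=\mathcal{J}(j,\lambda;\widehat{\Upsilon}^{(j,\lambda)})$ from the verification theorem to see that this derivative is a subgradient of $J(j,\cdot)$ at $\lambda$, and then invoke differentiability of $J(j,\cdot)=\widehat{\mathcal{Q}}_j(0,\cdot)$ (Lemma \ref{lem:properties-Q}) to collapse the subdifferential to $\{\partial_\lambda J(j,\lambda)\}$. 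What your envelope argument buys is that no convergence of strategies in $\lambda$ is needed and the dominated-convergence step is lighter; in particular your appeal to the uniform-in-$\lambda'$ estimate of Lemma \ref{lem:finite_number} for the sums driven by $\widehat{\Upsilon}^{(j,\lambda')}$ is superfluous in your scheme — since the strategy is fixed, you only need finiteness of $\mathbb{E}\bigl[\sum_{0\le t\le T}\mathcal{H}_t\bigl((\Delta\widehat{\Upsilon}_t^{(j,\lambda)})^{+}+(\Delta\widehat{\Upsilon}_t^{(j,\lambda)})^{-}\bigr)\bigr]$ at the single value $\lambda$, which that lemma already gives. What the paper's route buys, by contrast, is that it never needs to differentiate under the expectation for a fixed control at all, at the price of handling the perturbed optimizers. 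Both proofs rest on the same pillars: the verification theorem, the convexity and $C^{0,1}$ regularity of $\widehat{\mathcal{Q}}_j$, the growth bounds on $\mathcal{I}_i$, and Lemma \ref{lem:finite_number}.
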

\begin{proof}

For given $j\in\{0,1\}$ and $\l>0$, let us denote ${\cal G}_j(\xi)$ by
\begin{equation*}
    {\cal G}_j(\xi):=J(j,\xi)-\partial_\l J(j,\l)\xi.
\end{equation*}
It follows that 
\begin{equation*}
    {\cal G}_j'(\xi)=\partial_\l J(j,\xi)-\partial_\l J(j,\l)\;\;\mbox{and}\;\;{\cal G}_j''(\xi)=\partial_{\l\l}J(j,\xi).
\end{equation*}
Since $J(j,\xi)$ is strictly convex in $\xi>0$ (see Lemma \ref{lem:properties-Q}), $\xi=\l>0$ is a unique minimizer of ${\cal G}_j(\xi)$, i.e., 
\begin{equation}\label{eq:minimizer}
    J(j,\xi)-\partial_\l J(j,\l)\xi \ge J(j,\l)-\partial_\l J(j,\l)\l\;\;\mbox{for all}\;\;\xi>0.
\end{equation}

For sufficiently small $\varepsilon>0$, let us denote $\l^{\pm\varepsilon}$ by 
\begin{equation*}
    \l^{\pm\varepsilon}:=\l \pm \varepsilon.
\end{equation*}
Since 
\begin{equation*}
    J(j,\l)= \sup_{\Upsilon \in \Phi_j}{\cal J}(j,\l;\Upsilon)={\cal J}(j,\l;\widehat{\Upsilon}^{(t,\l)}),
\end{equation*}
it follows from \eqref{eq:minimizer} that 
\begin{equation}
\begin{aligned}
    {\cal J}(j,\l^{\pm\varepsilon};\widehat{\Upsilon}^{(j,\l^{\pm\varepsilon})}) +(-\partial_{\l}J(j,\l))\l^{\pm\varepsilon}=&J(j,\l^{\pm\varepsilon}) +(-\partial_{\l}J(j,\l))\l^{\pm\varepsilon}\\
    \ge&J(j,\l) +(-\partial_{\l}J(j,\l))\l \\
    \ge&{\cal J}(j,\l;\widehat{\Upsilon}^{(j,\l^{\pm\varepsilon})}) +(-\partial_{\l}J(j,\l))\l.
\end{aligned}
\end{equation}
This yields that 
\begin{footnotesize}
\begin{equation*}
    \dfrac{{\cal J}(j,\l;\widehat{\Upsilon}^{(j,\l^{+\varepsilon})})-{\cal J}(j,\l^{+\varepsilon};\widehat{\Upsilon}^{(j,\l^{+\varepsilon})})}{\varepsilon}\le -\partial_{\l}J(j,\l)\;\;\mbox{and}\;\; \dfrac{{\cal J}(j,\l;\widehat{\Upsilon}^{(j,\l^{-\varepsilon})})-{\cal J}(j,\l^{-\varepsilon};\widehat{\Upsilon}^{(j,\l^{-\varepsilon})})}{-\varepsilon}\ge -\partial_{\l}J(j,\l).
\end{equation*}
\end{footnotesize}
That is,
\begin{footnotesize}
    \begin{equation*}
        \begin{aligned}
            0\le& \pm\left(-\partial_{\l}J(j,\l)-\mathbb{E}\left[\int_0^T{\cal H}_t\left(\dfrac{\widetilde{U}_1(t,{\cal Y}_t^\l)-\widetilde{U}_1(t,{\cal Y}_t^{\l^{\pm\varepsilon}})}{{\cal Y}_t^{\l}-{\cal Y}_t^{\l^{\pm\varepsilon}}}-\e_0 (1-\Upsilon_t^{(j,\l^{\pm\varepsilon})})-\e_1\Upsilon_t^{(j,\l^{\pm\varepsilon})}\right)dt\right.\right.\\+&\left.\left.{\cal H}_T\left(\dfrac{\widetilde{U}_2(T,{\cal Y}_T^\l)-\widetilde{U}_2(T,{\cal Y}_T^{\l^{\pm\varepsilon}})}{{\cal Y}_T^{\l}-{\cal Y}_T^{\l^{\pm\varepsilon}}}\right)+\zeta_0\sum_{0\le t\le T}{\cal H}_t(\D \Upsilon_t^{(j,\l^{\pm\varepsilon})})^+ +\zeta_1\sum_{0\le t\le T}{\cal H}_t(\D \Upsilon_t^{(j,\l^{\pm\varepsilon})})^-\right]\right). 
        \end{aligned}
    \end{equation*}
\end{footnotesize}

By Assumption \ref{as:utility}, there exist positive constants $\widehat{C}_1$ and $\widehat{C}_2$ such that 
$$
{\cal I}_1(t,\l)<\widehat{C}_1(1+\l^{-\frac{1}{\k_1}})\;\;\;\;\mbox{and}\;{\cal I}_2(t,\l)<\widehat{C}_2(1+\l^{-\frac{1}{\k_2}})\;\;\forall\;\;(t,\l)\in[0,T]\times(0,\infty).
$$
Moreover, it follows from Lemma \ref{lem:finite_number} that for any $\l>0$
\begin{equation*}    \mathbb{E}\left[\sup\limits_{\l\in[\l-\varepsilon,\l+\varepsilon]}\sum_{0\le t\le T}e^{-\b t}\left[\zeta_0{\cal Y}_t^\l (\D \widehat{\Upsilon}_t^{(j,\l)})^++\zeta_1{\cal Y}_t^\l (\D \widehat{\Upsilon}_t^{(j,\l)})^-\right]\right] <\infty.
\end{equation*}

Hence, letting $\varepsilon\downarrow 0$, the {\it dominated convergence theorem} implies that 
\begin{footnotesize}
    \begin{equation*}
  \begin{aligned}
      -\partial_{\l}J(j,\l)=&\mathbb{E}\left[\int_0^T{\cal H}_t \left({\cal I}_1(t,{\cal Y}_t^\l) - \epsilon_0 (1-\widehat{\Upsilon}_t^{(j,\l)}) - \epsilon_1 \widehat{\Upsilon}_t^{(j,\l)}\right)dt +{\cal H}_T {\cal I}_2(T,{\cal Y}_T^\l)\right.\\&\left.\;\;\;\;\;+\sum_{0 \leq t \le T}[\z_0{\cal H}_t(\Delta \widehat{\Upsilon}_t^{(j,\l)})^+ + \z_1{\cal H}_t(\Delta \widehat{\Upsilon}_t^{(j,\l)})^- ] \right]. 
  \end{aligned}
\end{equation*}
\end{footnotesize}
\end{proof}

Now, we are ready to state the duality theorem.
\begin{theorem}[Duality Theorem and Optimal Strategy]\label{thm:main}
Let $j \in \{0, 1\}$ and $w$ satisfying \eqref{eq:initial:as} be given. 
\begin{itemize}
    \item[(a)] The following duality relationship is established:
    \begin{equation}
        V(j, w) = \inf_{\l>0} \left[J(j,\l)+\l w\right].
    \end{equation}
    Moreover, there exists a unique $\l^*=\l^*(j,w,T)$ such that 
    \begin{equation}
         w= -\partial_{\l}J(j,\l^*)\;\;\mbox{and}\;\;V(j,w)=J(j,\l^*)+\l^* w.
    \end{equation}
    \item[(b)] The optimal job-switching $\Upsilon_t^{j,*}$, consumption $c_t^*$, and investment $\pi_t^*$ are given by 
    \begin{equation}
        \Upsilon_t^{j,*}=\widehat{\Upsilon}_t^{(j,\l)},\;\;c_t^*={\cal I}_1(t,{\cal Y}_t^{ *}),\;\;\mbox{and}\;\;\pi_t^*=\dfrac{\t}{\s}{\cal Y}_t^{^*} \partial_{\l\l}\widehat{\cal Q}_{\Upsilon_t^*}(t,{\cal Y}_t^*),
    \end{equation}
    where ${\cal Y}_t^*:={\cal Y}_t^{\l^*}$ and $\widehat{\cal Q}$ is defined in \eqref{eq:dual-t-value}.
    
    Moreover, the wealth $W_t^*=W_t^{c^*,\pi^*,\Upsilon^{j,*}}$ corresponding to the optimal strategey $(c^*,\pi^*,\Upsilon^{j,*})$ is given by 
    \begin{equation}
        W_t^* =-\partial_{\l}\widehat{\cal Q}_{\Upsilon_t^*}(t,{\cal Y}_t^*).
    \end{equation}
\end{itemize}
\end{theorem}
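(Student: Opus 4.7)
The plan is to combine the weak duality already established in \eqref{eq:weak-duality} with an explicit attaining strategy built from $\widehat\Upsilon^{(j,\lambda)}$, exploiting the convex-analytic and Markov structure developed in Sections \ref{sec:optimization}--\ref{sec:strategy}. For Part (a), Lemma \ref{lem:properties-Q} tells us that $\lambda\mapsto J(j,\lambda)=\widehat{\cal Q}_j(0,\lambda)$ is strictly convex in $\lambda$ with $\lim_{\lambda\to 0^+}\partial_{\lambda}J(j,\lambda)=-\infty$ and
\begin{equation*}
\lim_{\lambda\to +\infty}\partial_{\lambda}J(j,\lambda)
=\Bigl[\tfrac{\epsilon_1(1-e^{-rT})}{r}-\zeta_0\Bigr](1-j)+\tfrac{\epsilon_1(1-e^{-rT})}{r}\,j.
\end{equation*}
Assumption \ref{as:initial:as} places $-w$ strictly below this right-hand limit, so the strictly increasing continuous map $\partial_{\lambda}J(j,\cdot)$ reaches $-w$ at a unique $\lambda^{*}>0$, which is the unique minimizer of $\lambda\mapsto J(j,\lambda)+\lambda w$ and satisfies $w=-\partial_{\lambda}J(j,\lambda^{*})$.

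To turn this critical-point condition into attainment, I take the candidate $c^{*}_{t}={\cal I}_{1}(t,{\cal Y}^{\lambda^{*}}_{t})$, $\mathfrak{B}={\cal I}_{2}(T,{\cal Y}^{\lambda^{*}}_{T})$, and $\widehat\Upsilon^{(j,\lambda^{*})}$ from \eqref{eq:optimal_job_state}. Lemma \ref{lem:budget-equality} at $\lambda^{*}$ rewrites $w=-\partial_{\lambda}J(j,\lambda^{*})$ as precisely the static-budget equality hypothesis of Proposition \ref{pro:static-budget-cost}(b); invoking that proposition produces a portfolio $\pi^{*}$ making $(c^{*},\pi^{*},\widehat\Upsilon^{(j,\lambda^{*})})\in{\cal A}(j,w)$ with $W^{*}_{T}={\cal I}_{2}(T,{\cal Y}^{\lambda^{*}}_{T})$. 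Because $c^{*}$ and $\mathfrak{B}$ are the pointwise Fenchel maximizers, every inequality in \eqref{eq:Lagrangian-1}--\eqref{eq:Lagrangian-2} becomes an equality; combined with the Verification Theorem \ref{verification theorem} this gives that the expected utility of $(c^{*},\pi^{*},\widehat\Upsilon^{(j,\lambda^{*})})$ equals $J(j,\lambda^{*})+\lambda^{*}w$, matching the upper bound \eqref{eq:weak-duality} and proving the duality identity together with the optimality of the constructed triple.

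For Part (b), the $\Upsilon^{j,*}$ and $c^{*}$ identifications are immediate from the above. The feedback formulas for wealth and investment hinge on the pathwise identity $W^{*}_{t}=-\partial_{\lambda}\widehat{\cal Q}_{\Upsilon^{*}_{t}}(t,{\cal Y}^{*}_{t})$ for all $t\in[0,T]$. Granted this, I would apply It\^o--Tanaka to $-\partial_{\lambda}\widehat{\cal Q}_{\Upsilon^{*}_{t}}(t,{\cal Y}^{*}_{t})$: the $W^{1,2}_{p,\mathrm{loc}}\cap C^{0,1}$ regularity in Lemma \ref{lem:properties-Q}(a) together with the smooth-pasting relations \eqref{partial-x-u-x1x0} at $\Lambda_{0},\Lambda_{1}$ kill the local-time terms; differentiating the HJB equations \eqref{eq:HJB-1}--\eqref{eq:HJB-2} in $\lambda$ produces a drift that, in view of the geometric-Brownian dynamics of ${\cal Y}^{*}$, reproduces the $rW^{*}_{t}+(\mu-r)\pi^{*}_{t}-c^{*}_{t}+\epsilon_{\Upsilon^{*}_{t}}$ drift of \eqref{eq:wealth-dynamics0}; and \eqref{eq:properties-candi-Upsilon} causes the jump pieces at switching instants to line up exactly with $-\zeta_{0}(\Delta\Upsilon)^{+}-\zeta_{1}(\Delta\Upsilon)^{-}$. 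Equating the $dB_{t}$ coefficients then forces $\sigma\pi^{*}_{t}=\theta{\cal Y}^{*}_{t}\partial_{\lambda\lambda}\widehat{\cal Q}_{\Upsilon^{*}_{t}}(t,{\cal Y}^{*}_{t})$, yielding the stated formula.

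The main obstacle is establishing $W^{*}_{t}=-\partial_{\lambda}\widehat{\cal Q}_{\Upsilon^{*}_{t}}(t,{\cal Y}^{*}_{t})$ at general $t$, because the perturbation argument of Lemma \ref{lem:budget-equality} operates at the deterministic time $0$ while the switching times $\tau_{n}$ depend on $\lambda$ in a non-smooth way. The cleanest workaround is to define $\widetilde W_{t}:=-\partial_{\lambda}\widehat{\cal Q}_{\Upsilon^{*}_{t}}(t,{\cal Y}^{*}_{t})$ and show that both ${\cal H}_{t}W^{*}_{t}+\int_{0}^{t}{\cal H}_{s}(c^{*}_{s}-\epsilon_{\Upsilon^{*}_{s}})ds+\sum_{s\le t}{\cal H}_{s}[\zeta_{0}(\Delta\Upsilon^{*}_{s})^{+}+\zeta_{1}(\Delta\Upsilon^{*}_{s})^{-}]$ and the same expression with $W^{*}$ replaced by $\widetilde W$ are martingales (the first via \eqref{eq:static_budget-cost-t}, the second via It\^o applied to $\widehat{\cal Q}$ using the HJB equations and \eqref{eq:properties-candi-Upsilon}) with the common terminal value ${\cal H}_{T}\cdot{\cal I}_{2}(T,{\cal Y}^{*}_{T})+\sum_{s\le T}{\cal H}_{s}[\zeta_{0}(\Delta\Upsilon^{*}_{s})^{+}+\zeta_{1}(\Delta\Upsilon^{*}_{s})^{-}]+\int_{0}^{T}{\cal H}_{s}(c^{*}_{s}-\epsilon_{\Upsilon^{*}_{s}})ds$. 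Equality of the martingales then forces $W^{*}_{t}=\widetilde W_{t}$, completely bypassing any explicit $\lambda$-differentiation through the stopping times.
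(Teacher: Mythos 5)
Your proposal is correct and follows essentially the same route as the paper's proof: the unique $\lambda^*$ from strict convexity and the derivative limits in Lemma \ref{lem:properties-Q}, the attainment via Lemma \ref{lem:budget-equality} combined with Proposition \ref{pro:static-budget-cost}(b), closing the gap in \eqref{eq:weak-duality} through the Fenchel equalities and Theorem \ref{verification theorem}, and the It\^o--Tanaka comparison of $dB_t$ coefficients for $\pi^*$. The only point of divergence is the time-$t$ identity $W_t^*=-\partial_{\lambda}\widehat{\cal Q}_{\Upsilon_t^{j,*}}(t,{\cal Y}_t^*)$, which the paper dispatches with a one-line appeal to the strong Markov property applied to \eqref{eq:static_budget-cost-t}, whereas you justify it by identifying two martingales with common terminal value ${\cal H}_T{\cal I}_2(T,{\cal Y}_T^*)$ plus the accumulated cash flows; this is a valid, and in fact more explicit, substitute for the same step.
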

\begin{proof}
Since $J(j,\l)=\widehat{\cal Q}_j(0,\l)$, it follows from Lemma \ref{lem:properties-Q} that 
\begin{equation}
   \begin{aligned}
        &\lim_{\l \to \infty}\left(-\partial_{\l}J(0,\l)\right) = -\dfrac{\e_0(1-e^{-rT})}{r}+\zeta_0,\;\; \lim_{\l \to \infty}\left(-\partial_{\l}J(1,\l)\right) = -\dfrac{\e_1(1-e^{-rT})}{r},\\
        &\lim_{\l \to 0+}\left(-\partial_{\l}J(0,\l)\right) = \infty,\;\; \lim_{\l \to 0+}\left(-\partial_{\l}J(1,\l)\right) = \infty.
   \end{aligned}
\end{equation}
Since $J(j,\l)$ is strictly convex in $\l>0$, for given $w$ satisfying \eqref{eq:initial:as}, there exists a unique $\l^* = \l^*(j,w,T)$ such that 
$$
w=-\partial_{\l}J(j,\l^*).
$$
By Lemma \ref{lem:budget-equality}, we have 
\begin{equation}
    \begin{aligned}
        w=-\partial_{\l}J(j,\l^*)=&\mathbb{E}\left[\int_0^T{\cal H}_t \left({\cal I}_1(t,{\cal Y}_t^{\l^*}) - \epsilon_0 (1-\widehat{\Upsilon}_t^{(j,\l^*)}) - \epsilon_1 \widehat{\Upsilon}_t^{(j,\l^*)}\right)dt+{\cal H}_T {\cal I}_2(T,{\cal Y}_T^{\l^*})  \right.\\+&\left.\sum_{0 \leq t \le T}[\z_0{\cal H}_t(\Delta \widehat{\Upsilon}_t^{(j,\l^*)})^+ + \z_1{\cal H}_t(\Delta \widehat{\Upsilon}_t^{(j,\l^*)})^- ]\right]\\
        =&\mathbb{E}\left[\int_0^T{\cal H}_t \left(c_t^* - \epsilon_0 (1- {\Upsilon}_t^{j,*}) - \epsilon_1  {\Upsilon}_t^{j,*}\right)dt+{\cal H}_T {\cal I}_2(T,{\cal Y}_T^{*})  \right.\\+&\left.\sum_{0 \leq t \le T}[\z_0{\cal H}_t(\Delta  {\Upsilon}_t^{j,*})^+ + \z_1{\cal H}_t(\Delta  {\Upsilon}_t^{j,*})^- ]\right],
    \end{aligned}
\end{equation}
where ${\cal Y}_t^*={\cal Y}_t^{\l^*}$, $c_t^*={\cal I}_1(t,{\cal Y}_t^*)$, and $\Upsilon_t^{j,*}=\widehat{\Upsilon}_t^{(j,\l^*)}$.

By (b) in Proposition \ref{pro:static-budget-cost}, there exists an investment strategy $\pi_t^*$ such that $(c^*,\pi^*,\Upsilon^{j,*})\in{\cal A}(j,w)$. Moreover, the corresponding wealth $W_t^*=W_t^{c^*,\pi^*,\Upsilon^{j,*}}$ is given by 
\begin{footnotesize}
    \begin{equation}
    \begin{aligned}
        {\cal H}_tW_t^*=&\mathbb{E}\left[\int_t^T{\cal H}_s \left(c_s^* - \epsilon_0 (1- {\Upsilon}_s^{j,*}) - \epsilon_1  {\Upsilon}_s^{j,*}\right)ds+{\cal H}_T {\cal I}_2(T,{\cal Y}_T^{*}) +\sum_{t \leq s \le T}[\z_0{\cal H}_s(\Delta  {\Upsilon}_s^{j,*})^+ + \z_1{\cal H}_s(\Delta  {\Upsilon}_s^{j,*})^- ]\right].
    \end{aligned}
\end{equation}
\end{footnotesize}
with $W_T^* ={\cal I}_2(T,{\cal Y}_T^*).$

By the {\it strong Markov property}, we easily deduce that
\begin{equation}
    W_t^*=-\partial_{\l}\widehat{\cal Q}_{\Upsilon_t^{j,*}}(t,{\cal Y}_t^*).
\end{equation}

On the other hand, 
\begin{footnotesize}
    \begin{equation}
    \begin{aligned}
        \l^* w=&\mathbb{E}\left[\int_0^T e^{-\b t}{\cal Y}_t^* \left(c_t^* - \epsilon_0 (1- {\Upsilon}_t^{j,*}) - \epsilon_1  {\Upsilon}_t^{j,*}\right)dt+e^{-\b T}{\cal Y}_T^* {\cal I}_2(T,{\cal Y}_T^{*})  +\sum_{0 \leq t \le T}e^{-\b t}[\z_0{\cal Y}_t^*(\Delta  {\Upsilon}_t^{j,*})^+ + \z_1{\cal Y}_t^*(\Delta  {\Upsilon}_t^{j,*})^- ]\right]\\
        =&\mathbb{E}\left[\int_0^T e^{-\b t} \left(U_1(t,c_t^*) -L_0(1- {\Upsilon}_t^{j,*}) - L_1  {\Upsilon}_t^{j,*}\right)dt +e^{-\b T}U_2(T,W_T^*)\right]-J(j,\l^*).
    \end{aligned}
\end{equation}
\end{footnotesize}
Thus, it follows from the {\it weak-duality} in \eqref{eq:weak-duality} that 
\begin{equation}
    \begin{aligned}
        &\mathbb{E}\left[\int_0^T e^{-\b t} \left(U_1(t,c_t^*) -L_0(1- {\Upsilon}_t^{j,*}) - L_1  {\Upsilon}_t^{j,*}\right)dt +e^{-\b T}U_2(T,W_T^*)\right]\\
        =&J(j,\l^*)+\l^*w=\inf_{\l>0}\left[J(j,\l)+\l w\right]\ge V(j,w).
    \end{aligned}
\end{equation}
Since $(c^*,\pi^*,\Upsilon^{j,*})\in{\cal A}(j,w)$, we can conclude that 
\begin{footnotesize}
    \begin{equation}
\begin{aligned}
        V(j,w)=&\sup_{(c,\pi,\Upsilon)\in{\cal A}(j,w)}\mathbb{E}\left[\int_0^T e^{-\b t} \left(U_1(t,c_ ) -L_0(1- {\Upsilon}_t ) - L_1  {\Upsilon}\right)dt + e^{-\b T}U_2(T,W_T^{c,\pi,\Upsilon})\right]\\
        =&\mathbb{E}\left[\int_0^T e^{-\b t} \left(U_1(t,c_t^*) -L_0(1- {\Upsilon}_t^{j,*}) - L_1  {\Upsilon}_t^{j,*}\right)dt +e^{-\b T}U_2(T,W_T^*)\right].
\end{aligned}
\end{equation}
\end{footnotesize}
Hence, we have just proved that the duality theorem holds and the strategy $(c^*,\pi^*,\Upsilon_t^{j,*})$ is {\it optimal.} 

Note that the dynamics of the wealth $W_t^*$ is given by
$$dW_t^*=\left[rW_t^* + (\mu-r)\pi_t^* - c_t^* + \left(\e_0(1-\Upsilon_t^{j,*})+\e_1\Upsilon_t^{j,*}\right) \right]dt+\s \pi_t^* dB_t-\zeta_0(\Delta \Upsilon_t^{j,*})^+ -\zeta_1(\Delta \Upsilon_t^{j,*})^-.$$
Applying It\^{o}-Tanaka formula to $W_t^*=-\partial_{\l}\widehat{\cal Q}_{\Upsilon_t^{j,*}}(t,{\cal Y}_t^*)$ and comparing the coefficients of $dB_t$ in the two different expressions of $dW_t^*$, we can easily derive that 
\begin{equation*}
    \pi_t^*=\dfrac{\t}{\s}{\cal Y}_t^{^*} \partial_{\l\l}\widehat{\cal Q}_{\Upsilon_t^*}(t,{\cal Y}_t^*).
\end{equation*}
\end{proof}

We define two regions ${\cal R}_T^{0}$ and ${\cal R}_T^{1}$ in the $(t,w)$-coordinate as follows:
\begin{footnotesize}
    \begin{equation}
    \begin{aligned}
        {\cal R}_T^{0}:=&\left\{(t,w)\;:\;0\le t<T,\;\left(-\frac{\epsilon_1(1-e^{-r(T-t)})}{r}+\zeta_0\right){\bf I}_{\{t\in[0,T-T_1)\}}+\left(-\dfrac{\e_0(1-e^{-r(T-t)})}{r}\right){\bf I}_{\{t\in[T-T_1,T]\}}<w \right\},\vspace{2mm}\\
        {\cal R}_T^{1}:=&\left\{(t,w)\;:\;0\le t<T,\; -\dfrac{\e_1(1-e^{-r(T-t)})}{r} <w \right\}.
    \end{aligned}
\end{equation}
\end{footnotesize}
By Theorem \ref{thm:main}, for each $j\in\{0,1\}$, the function $(-\partial_{\l}\widehat{\cal Q}_j(t,\l)):{\cal N}_T\to {\cal R}_T^{j}$ is bijective. Moreover, {\it the optimal job-switching boundaries} $w_0(t)$ and $w_1(t)$ in the $(t,w)$-coordinate can be defined as 
\begin{equation}\label{eq:two-boundaries-wealth}
\begin{aligned}
    w_0(t):=&-\partial_{\l}\widehat{\cal Q}_0(t,\L_0(t))\;\;\mbox{for}\;\;t\in[0,T-T_1),\\
    w_1(t):=&-\partial_{\l}\widehat{\cal Q}_1(t,\L_1(t))\;\;\mbox{for}\;\;t\in[0,T).
\end{aligned}
\end{equation}

In terms of the optimal job-switching boundaries $w_0(t)$ and $w_1(t)$, we also define the following four regions in the $(t,w)$-coordinate:
\begin{equation}\label{de:domain-t-w}
    \begin{aligned}
        \mathcal{WR}_0^{(t,w)}:=& \{(t,w)\in{\cal R}_T^0\;:\; w_0(t)<w\},\;\;
        \mathcal{SR}_0^{(t,w)}:=\{(t,w)\in{\cal R}_T^0\;:\;  w\le w_0(t)\},\vspace{2mm}\\
        \mathcal{WR}_1^{(t,w)}:=& \{(t,w)\in{\cal R}_T^1\;:\;w<w_1(t)\},\;\;
        \mathcal{SR}_1^{(t,w)}:= \{(t,w)\in{\cal R}_T^1\;:\; w\ge w_1(t)\}.
    \end{aligned}
\end{equation}

By Theorem \ref{property-Q} and Lemma \ref{lem:properties-Q}, we directly obtain the following lemma.
\begin{lem}
   The optimal switching boundaries, $w_0(t)$ for $t\in [0,T-T_1)$ and $w_1(t)$ for $t\in [0,T)$, are continuous. Furthermore, 
\begin{equation}
    \lim_{t\to (T-T_1)-}w_0(t) = -\e_0\dfrac{1-e^{-rT_1}}{r}\;\;\mbox{and}\;\;\lim_{t\to T-}w_1(t)=\infty.
\end{equation}
\end{lem}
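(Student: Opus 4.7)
Continuity of $w_0$ on $[0,T-T_1)$ and $w_1$ on $[0,T)$ is immediate from the defining formula \eqref{eq:two-boundaries-wealth}: Theorem \ref{property-Q}(b) gives $\L_0\in C^\infty([0,T-T_1))$ and $\L_1\in C^\infty([0,T))$, while Lemma \ref{lem:properties-Q}(a) yields $\widehat{\cal Q}_j\in C^{0,1}(\overline{\cal N}_T)$, so $\partial_\l\widehat{\cal Q}_j$ is continuous on $\overline{\cal N}_T$. The compositions $-\partial_\l\widehat{\cal Q}_j(t,\L_j(t))$ therefore inherit continuity.

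For the limit at $t=T-T_1$, the key step is value-matching across $\L_0$. On $\mathcal{SR}_0^{(t,\l)}$ we have ${\cal Q}_1-{\cal Q}_0=\z_0\l$, so by the $C^{0,1}$ regularity of ${\cal Q}_0$ and ${\cal Q}_1$, differentiating and evaluating at $\l=\L_0(t)$ yields $\partial_\l{\cal Q}_0(t,\L_0(t))=\partial_\l{\cal Q}_1(t,\L_0(t))-\z_0$. Combining with $\widehat{\cal Q}_j={\cal Q}_R+{\cal Q}_j$ rewrites $w_0(t)$ as $\z_0-\partial_\l\widehat{\cal Q}_1(t,\L_0(t))$. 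Since $\L_0(t)\to+\infty$ as $t\to(T-T_1)^-$ by Theorem \ref{property-Q}(c), and Lemma \ref{lem:properties-Q}(c) states $\lim_{\l\to+\infty}\partial_\l\widehat{\cal Q}_1(t,\l)=\e_1(1-e^{-r(T-t)})/r$ pointwise in $t$, I would upgrade this to a joint limit along the diagonal $\l=\L_0(t)$ via the Feynman--Kac representation of $\widehat{\cal Q}_1$ along the candidate strategy $\widehat{\Upsilon}^{(1,\l)}$: differentiating under the expectation through the scaling ${\cal Y}_s^\l=\l\xi_{t,s}$, with $\xi_{t,s}:=e^{\b(s-t)}{\cal H}_s/{\cal H}_t$ independent of $\l$, and applying dominated convergence with the $\l^{-1/k_i}$-decay implied by the growth bound $\partial_c U_i(s,c)c^{k_i}\le C_i$ from Assumption \ref{as:utility}. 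Substituting $1-e^{-rT_1}=r\z_0/(\e_1-\e_0)$ from \eqref{de:T1} then simplifies $\z_0-\e_1(1-e^{-rT_1})/r$ to the claimed $-\e_0(1-e^{-rT_1})/r$.

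For $w_1(t)\to+\infty$ as $t\to T^-$, decompose $\widehat{\cal Q}_1={\cal Q}_R+{\cal Q}_1$; since $|\partial_\l{\cal Q}_1|\le C$ by Theorem \ref{property-Q}(e), it suffices to show $-\partial_\l{\cal Q}_R(t,\L_1(t))\to+\infty$. Differentiating the Feynman--Kac representation of ${\cal Q}_R$ gives the non-negative lower bound $-\partial_\l{\cal Q}_R(t,\l)\ge\mathbb{E}_t\bigl[e^{-\b(T-t)}\xi_{t,T}\,{\cal I}_2(T,\l\xi_{t,T})\bigr]$. By Theorem \ref{property-Q}(c), $\L_1(t)\to 0^+$, and $\xi_{t,T}\to 1$ in probability as $t\to T^-$, so $\L_1(t)\xi_{t,T}\to 0^+$ and hence ${\cal I}_2(T,\L_1(t)\xi_{t,T})\to+\infty$ by the Inada condition $\lim_{y\to 0^+}{\cal I}_2(T,y)=+\infty$ in Assumption \ref{as:utility}; a Fatou-type argument on the non-negative integrand delivers the divergence.

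The main obstacle I anticipate is the uniformity required in the second step: pointwise-in-$t$ information from Lemma \ref{lem:properties-Q}(c) does not suffice, so one must establish joint convergence as $t\to(T-T_1)^-$ and $\l\to+\infty$ simultaneously via the Feynman--Kac plus dominated-convergence argument sketched above, exploiting the polynomial growth bound of Assumption \ref{as:utility}. The Fatou step for the divergence at $T$ is, by comparison, relatively routine.
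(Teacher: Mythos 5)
Your proposal is correct, and it is in fact considerably more detailed than the paper's own argument, which consists of the single sentence that the lemma follows ``directly'' from Theorem \ref{property-Q} and Lemma \ref{lem:properties-Q}. You correctly identify the only genuine issue -- that the pointwise-in-$t$ limits of $\partial_\l\widehat{\cal Q}_j(t,\l)$ as $\l\to+\infty$ or $\l\to0^+$ do not by themselves give the diagonal limits along $\l=\L_0(t)$, $\l=\L_1(t)$ -- and your smooth-fit rewriting $w_0(t)=\z_0-\partial_\l\widehat{\cal Q}_1(t,\L_0(t))$, the algebra via \eqref{de:T1}, and the Feynman--Kac/dominated-convergence and Fatou arguments are all sound (the Fatou step is legitimate once $\xi_{t,T}$ is written as a function of a fixed standard normal, and the differentiation under the expectation for the $\l$-dependent strategy is exactly the envelope argument of Lemma \ref{lem:budget-equality}, so nothing new is needed there). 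The route the authors presumably intend, and which is shorter than yours, is a squeeze based on convexity: by Lemma \ref{lem:properties-Q}(b) the map $\l\mapsto-\partial_\l\widehat{\cal Q}_j(t,\l)$ is strictly decreasing, so $w_0(t)\geq-\lim_{\l\to\infty}\partial_\l\widehat{\cal Q}_0(t,\l)$ gives the lower bound from Lemma \ref{lem:properties-Q}(c), while for any fixed $M$ one has $w_0(t)\leq-\partial_\l\widehat{\cal Q}_0(t,M)$ once $\L_0(t)>M$, and letting $t\to(T-T_1)^-$ first (using continuity of $\partial_\l\widehat{\cal Q}_0$ at $(T-T_1,M)$, \eqref{expression-Q0} and Proposition \ref{pro:unconstrained}(c)) and then $M\to\infty$ gives the matching upper bound; the limit $w_1(t)\to\infty$ follows symmetrically from $w_1(t)\geq-\partial_\l\widehat{\cal Q}_1(t,m)\to{\cal I}_2(T,m)$ for fixed small $m$ and the Inada condition. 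So your probabilistic representation buys an explicit, self-contained verification of the uniformity, at the cost of re-deriving machinery (Lemma \ref{lem:budget-equality} at interior times) that the monotonicity-plus-continuity argument avoids; either way the statement is established.
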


\begin{figure}[ht]
	\centering
    	\subfigure[$w_0(t)$]{\includegraphics[scale=0.45]{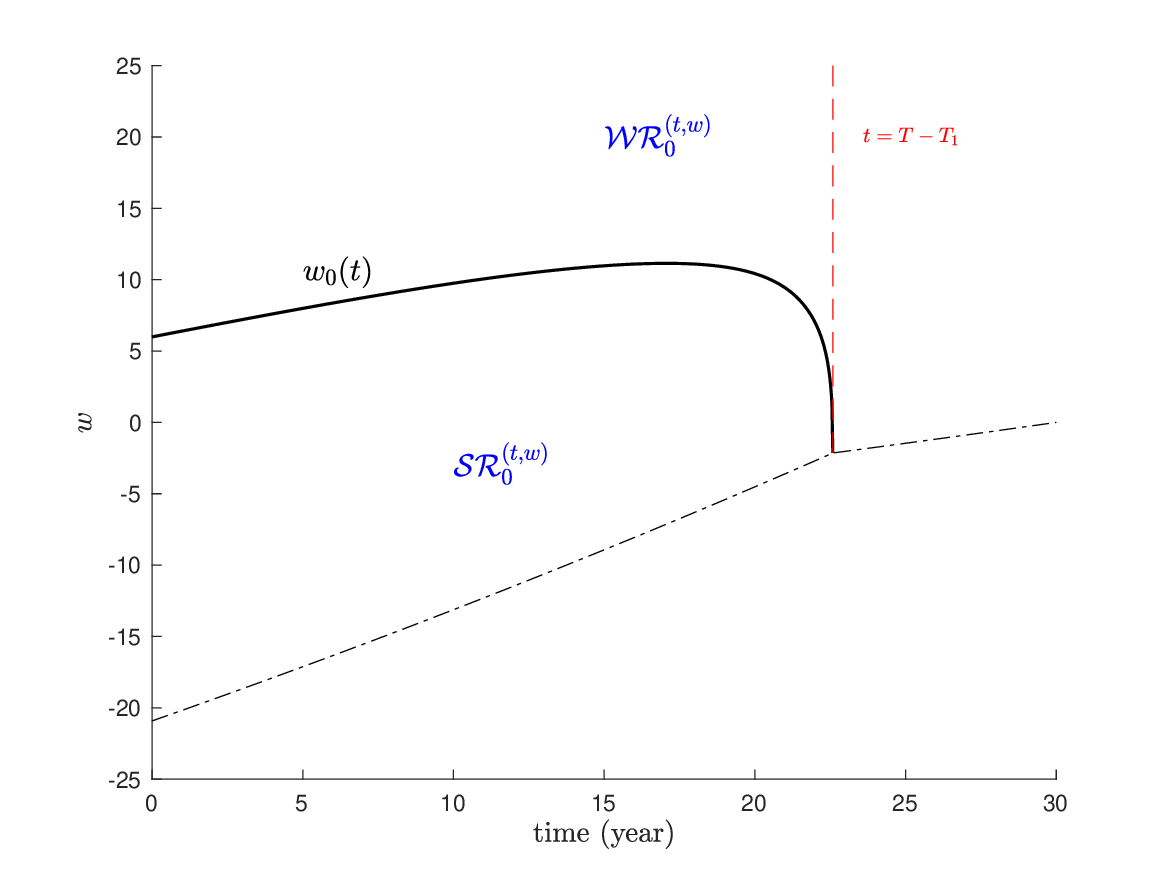}}
	\subfigure[$w_1(t)$]{\includegraphics[scale=0.45]{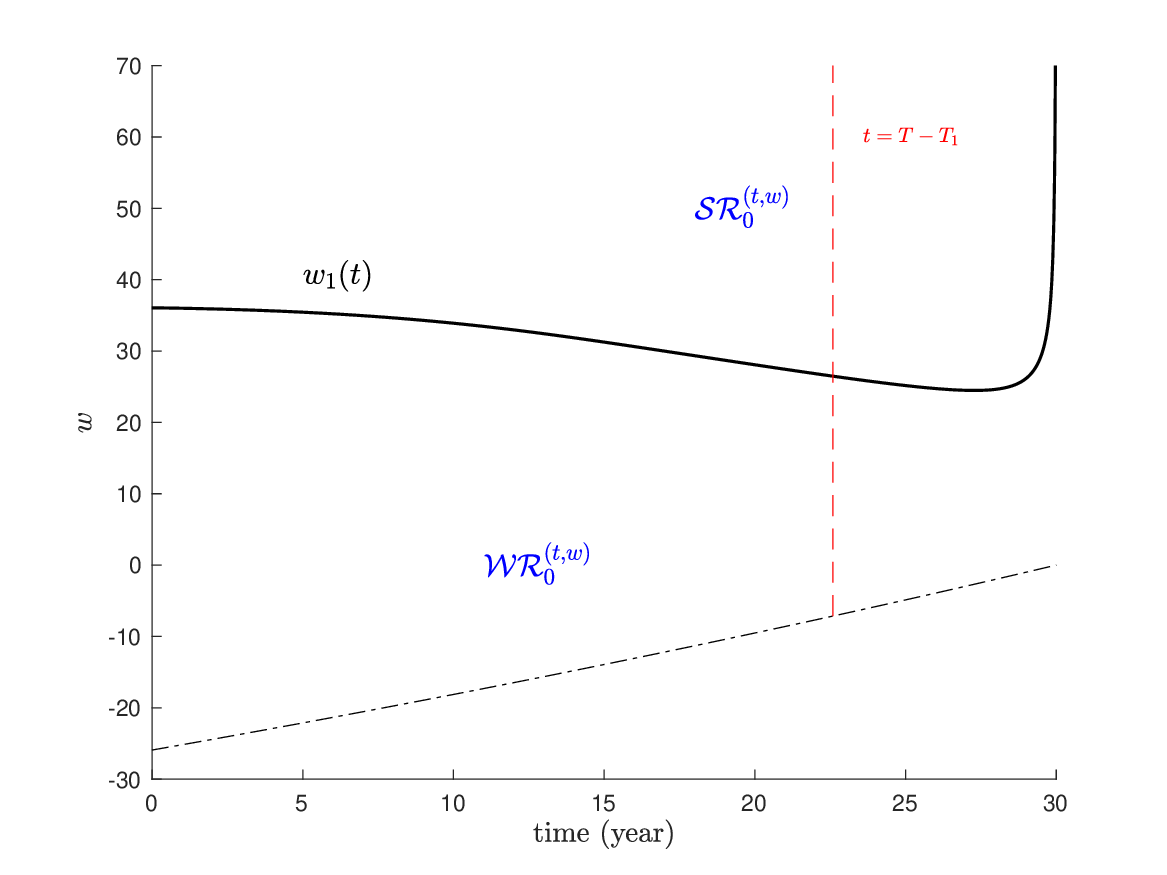}}
	\caption{The four regions $\mathcal{WR}_0^{(t,w)}$, $\mathcal{SR}_0^{(t,w)}$, $\mathcal{WR}_1^{(t,w)}$, and $\mathcal{SR}_1^{(t,w)}$ in $(t,w)$-domain \label{fig:regions-w}}
\end{figure}

Figure \ref{fig:regions-w} represents the optimal job-switching boundaries defined in \eqref{eq:two-boundaries-wealth} and the four regions defined in \eqref{de:domain-t-w}. In Job ${\cal D}_0$, as the agent's wealth decreases and approaches the job-switching boundary $w_0(t)$, he/she switch to Job ${\cal D}_1$, which offers a higher income but comes with a cost $\zeta_0$. However, if the remaining time until the mandatory retirement date is less than or equal to $T_1$, job switching does not occur because the cost $\zeta_0$ is greater than the increase in the present value of future income obtained by switching to Job ${\cal D}_1$. Conversely, when the agent is in ${\cal D}_1$, if his/her wealth increases sufficiently to hit the job-switching boundary $w_1(t)$, he/she switches to ${\cal D}_0$ to reduce the disutility caused by labor, even if it means incurring a cost $\zeta_1$. However, if there is very little time remaining until the mandatory retirement date, it is more advantageous to retire and eliminate utility rather than paying the cost to switch jobs and reduce disutility. Therefore, job switching does not occur in this case.

\section{Numerical Results}\label{sec:numerical}

In this section, our objective is to present numerical results concerning the optimal consumption and investment strategy within each job, denoted as ${\cal D}_0$ and ${\cal D}_1$, which were obtained in the previous section. Specifically, we examine how job-switching costs and mandatory retirement dates affect the optimal consumption and investment strategy in each job.

Throughout this section, we make the assumption that the agent has constant relative risk aversion and the felicity functions $U_1$ and $U_2$ are given by
\begin{equation}\label{eq:CRRA}
    U_1(c) =\dfrac{c^{1-\g}}{1-\g}\;\;\mbox{and}\;\;U_2(T,w)=\left(\dfrac{1-e^{-K(T_D-T)}}{K}\right)^\g\dfrac{w^{1-\g}}{1-\g}\;\;\;\g>0,\;\g\ne1.
\end{equation}
Here, $\g$ is the coefficient of relative risk aversion, $T_D>T$ is the expected death time, and $K$ is the Merton coefficient defined as 
$$
K:=r+\dfrac{\b-r}{\g} +\dfrac{\g-1}{\g^2}\dfrac{\t^2}{2}.
$$
\begin{rem}
    The felicity function $U_2(T,w)$ for the bequest is the value function of the following classical Merton problem (see \citet{M69,M71}):
    \begin{equation}
        U_2(T,w)=\sup_{(c,\pi)\in{\cal A}_R(T,w)}\mathbb{E}_T\left[\int_T^{T_D}e^{-\b(t-T)}U_1(c_t)dt\right],
    \end{equation}
    where ${\cal A}_R(T,w)$ is the set of all admissible consumption and investment strategies such that (i) $c\ge 0$ and $\pi$ are $\mathbb{F}$-progressively measurable processes satisfying $\int_T^{T_D}c_t dt <\infty$ a.s., $\int_T^{T_D}\pi_t^2 dt <\infty$, a.s., (ii) the wealth $W_t^{c,\pi}$ corresponding to the strategy $(c,\pi)$ follows $dW_t^{c,\pi} = [rW_t^{c,\pi}+(\m-r)\pi_t -c_t]dt+\s\pi_t dB_t\;\;\mbox{for}\;\;t\in [T,T_D]$ with $W_T^{c,\pi}=w$, (iii) $W_t^{c,\pi}\ge 0$ for $t\in [T,T_D]$, a.s..  

    Indeed, $U_2(T,w)$ represents the expected utility achievable by optimizing consumption and investment from the mandatory retirement date to the expected time of death, given that the agent's wealth is $w$ at the retirement date.
\end{rem}

Under the felicity functions in \eqref{eq:CRRA}, the convex conjugate function $\widetilde{U}_i$ are given by 
\begin{equation}
    \widetilde{U}_1(\l) =\dfrac{\g}{1-\g}\l^{-\frac{1-\g}{\g}}\;\;\mbox{and}\;\;\widetilde{U}_2(T,\l)=\dfrac{1-e^{-K(T_D-T)}}{K}\dfrac{\g}{1-\g}\l^{-\frac{1-\g}{\g}}.
\end{equation}
It follows that 
\begin{equation}
    \begin{aligned}
        {\cal Q}_R(t,\l) =\dfrac{1-e^{-K(T_D-t)}}{K}\dfrac{\g}{1-\g}\l^{-\frac{1-\g}{\g}}.
    \end{aligned}
\end{equation}

In the following proposition, we not only provide the solution to the optimal switching problem but also present an integral equation that the two free boundaries $\L_0(t)$ and $\L_1(t)$ satisfy.
\begin{pro}\label{pro:integral-equations}
    ${\cal Q}_0$ and ${\cal Q}_1$ have the following integral equation representations:
    \begin{footnotesize}
        \begin{equation}
        \begin{aligned}
            {\cal Q}_0(t,\l)=&\dfrac{1-e^{-r(T-t)}}{r}\e_0\l -\dfrac{1-e^{-\b(T-t)}}{\b}L_0+\left\{
           (\e_1-\e_0-r\zeta_0)\l\int_t^{T-T_1} e^{-r (s-t)}{\bf N}\left(d_+(s-t,\dfrac{\l}{\Lambda_0(s)})\right)ds\right.\\ -&\left.(L_1-L_0)\int_t^{T-T_1} e^{-\b(s-t)}{\bf N}\left(d_-(s-t,\dfrac{\l}{\Lambda_0(s)})\right)ds\right\}{\rm\bf I}_{\{t\in[0,T-T_1)\}}\;\;\;\mbox{for}\;\;\;0<\l <\L_0(t),\\
            {\cal Q}_1(t,\l)=&\dfrac{1-e^{-r(T-t)}}{r}\e_1\l -\dfrac{1-e^{-\b(T-t)}}{\b}L_1-
           (\e_1-\e_0+r\zeta_1)\l\int_t^{T} e^{-r (s-t)}{\bf N}\left(-d_+(s-t,\dfrac{\l}{\Lambda_1(s)})\right)ds\\ +&(L_1-L_0)\int_t^{T} e^{-\b(s-t)}{\bf N}\left(-d_-(s-t,\dfrac{\l}{\Lambda_1(s)})\right)ds\;\;\;\mbox{for}\;\;\;\l>\L_1(t),
        \end{aligned}
    \end{equation}
    \end{footnotesize}
    and 
    \begin{footnotesize}
        \begin{equation}
            \begin{aligned}
            {\cal Q}_0(t,\l)=&{\cal Q}_1(t,\l)-\zeta_0 \l\;\;\mbox{for}\;\;\l \ge \L_0(t), \\
            {\cal Q}_1(t,\l)=&{\cal Q}_0(t,\l)-\zeta_1 \l\;\;\mbox{for}\;\;0<\l \le \L_1(t),
        \end{aligned}
        \end{equation}
    \end{footnotesize}
    where ${\bf N}(\cdot)$ is the standard normal distribution function,  $d_{+}(t,\l)$ and $d_{-}(t,\l)$ are given by 
    \begin{equation*}
        d_{\pm}(t,\l):=\dfrac{\log{\l}+(\b-r\pm\frac{1}{2}\t^2)t}{\t\sqrt{t}}.
    \end{equation*}
    Moreover, the two free boundaries $\L_0(t)$ and $\L_1(t)$ satisfy the following coupled integral equations:
    \begin{equation}\label{eq:coupled-IE}
        \begin{aligned}
            \zeta_0 \L_0 (t) &= {\cal Q}_1(t,\L_0(t))-{\cal Q}_0(t,\L_0(t))\;\;\mbox{for}\;\;t\in[0,T-T_1),\vspace{2mm}\\
            -\zeta_1 \L_1 (t) &= {\cal Q}_1(t,\L_1(t))-{\cal Q}_0(t,\L_1(t))\;\;\mbox{for}\;\;t\in[0,T).
        \end{aligned}
    \end{equation}
\end{pro}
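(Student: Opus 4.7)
The plan is to derive the explicit formulas by applying the Feynman-Kac formula to the strong-solution PDE system \eqref{de:Q0-and-Q1} satisfied by $({\cal Q}_0,{\cal Q}_1)$, and then to read off the coupled integral equations \eqref{eq:coupled-IE} from the value-matching conditions at the two free boundaries $\L_0$ and $\L_1$.

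Step 1: Feynman-Kac representation. By Theorem \ref{property-Q}, $({\cal Q}_0,{\cal Q}_1)\in W^{1,2}_{p,loc}({\cal N}_T)\cap C^{0,1}(\overline{\cal N}_T)$ for any $p\ge 1$ and satisfies
$$
-(\partial_t+{\cal L}){\cal Q}_j=\e_j\l-L_j+f_j,\quad j=0,1,\qquad {\cal Q}_j(T,\cdot)=0,
$$
with the explicit inhomogeneous sources
$$
f_0=\bigl[(\e_1-\e_0-r\zeta_0)\l-(L_1-L_0)\bigr]{\bf I}_{\{\l\ge \L_0(s)\}},\qquad
f_1=\bigl[(L_1-L_0)-(\e_1-\e_0+r\zeta_1)\l\bigr]{\bf I}_{\{\l\le \L_1(s)\}}.
$$
Since the infinitesimal generator of ${\cal Y}_s^\l$ equals ${\cal L}+\b$, I apply the generalized It\^{o} formula of Krylov to $s\mapsto e^{-\b(s-t)}{\cal Q}_j(s,{\cal Y}_s^\l)$ on $[t,T]$; the uniform bound $|\partial_\l {\cal Q}_j|\le C$ from Theorem \ref{property-Q} combined with the exponential integrability of ${\cal Y}_s^\l$ makes the stochastic integral a true martingale, so taking conditional expectations and using ${\cal Q}_j(T,\cdot)=0$ yields
$$
{\cal Q}_j(t,\l)=\mathbb{E}_t\!\left[\int_t^T e^{-\b(s-t)}\bigl(\e_j{\cal Y}_s^\l-L_j+f_j(s,{\cal Y}_s^\l)\bigr)ds\right].
$$

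Step 2: Evaluating the Gaussian expectations. Writing $\log{\cal Y}_s^\l=\log\l+(\b-r-\t^2/2)(s-t)-\t(B_s-B_t)$, standard log-normal calculation gives $\mathbb{E}_t[{\cal Y}_s^\l]=\l e^{(\b-r)(s-t)}$, so $\mathbb{E}_t\!\left[\int_t^T e^{-\b(s-t)}(\e_j{\cal Y}_s^\l-L_j)ds\right]=\e_j\l(1-e^{-r(T-t)})/r-L_j(1-e^{-\b(T-t)})/\b$. By completing the square (equivalently, by a Girsanov change of measure under $d\mathbb{Q}/d\mathbb{P}\propto{\cal Y}_s^\l$), I obtain the two identities
$$
\mathbb{E}_t[{\bf I}_{\{{\cal Y}_s^\l\ge K\}}]={\bf N}\bigl(d_-(s-t,\l/K)\bigr),\qquad
\mathbb{E}_t[{\cal Y}_s^\l{\bf I}_{\{{\cal Y}_s^\l\ge K\}}]=\l e^{(\b-r)(s-t)}{\bf N}\bigl(d_+(s-t,\l/K)\bigr),
$$
whose reverse-inequality counterparts replace ${\bf N}(d_\pm)$ by ${\bf N}(-d_\pm)$. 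Substituting into $\mathbb{E}_t[\int_t^T e^{-\b(s-t)}f_0(s,{\cal Y}_s^\l)ds]$ and invoking $\mathcal{SR}_0^{(t,\l)}\subset{\cal N}_{T-T_1}$ from Theorem \ref{property-Q} to truncate the $s$-integration at $T-T_1$ (thereby producing the indicator ${\bf I}_{\{t\in[0,T-T_1)\}}$) recovers the stated formula for ${\cal Q}_0(t,\l)$ on $\{0<\l<\L_0(t)\}$; the analogous manipulation applied to $f_1$ produces the formula for ${\cal Q}_1(t,\l)$ on $\{\l>\L_1(t)\}$. In the complementary regions, the equalities ${\cal Q}_0={\cal Q}_1-\zeta_0\l$ on $\{\l\ge\L_0(t)\}$ and ${\cal Q}_1={\cal Q}_0-\zeta_1\l$ on $\{0<\l\le\L_1(t)\}$ follow directly from ${\cal P}=\zeta_0\l$ on $\mathcal{SR}_0^{(t,\l)}$ and ${\cal P}=-\zeta_1\l$ on $\mathcal{SR}_1^{(t,\l)}$ (Theorem \ref{property-Q}).

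Step 3: Coupled integral equations. Because ${\cal Q}_0,{\cal Q}_1\in C^{0,1}(\overline{\cal N}_T)$, the value-matching identities ${\cal Q}_1(t,\L_0(t))-{\cal Q}_0(t,\L_0(t))=\zeta_0\L_0(t)$ for $t\in[0,T-T_1)$ and ${\cal Q}_1(t,\L_1(t))-{\cal Q}_0(t,\L_1(t))=-\zeta_1\L_1(t)$ for $t\in[0,T)$ hold on the free boundaries by continuity of ${\cal P}={\cal Q}_1-{\cal Q}_0$. Plugging $\l=\L_0(t)$ and $\l=\L_1(t)$ into the integral representations derived in Step 2 and equating with $\zeta_0\L_0(t)$ and $-\zeta_1\L_1(t)$ respectively produces \eqref{eq:coupled-IE}. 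The principal technical obstacle is the justification of the It\^{o} expansion in Step 1 for a Sobolev (not classical) solution and the verification that the associated stochastic integral is a genuine martingale; this is handled by Krylov's generalized It\^{o} formula together with the $L^\infty$ bound on $\partial_\l{\cal Q}_j$ from Theorem \ref{property-Q}. The remaining work, a pair of log-normal computations followed by substitution, is routine.
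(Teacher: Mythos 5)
Your proposal is correct and follows essentially the same route as the paper: both apply an It\^{o}/Feynman--Kac argument to the PDE system \eqref{de:Q0-and-Q1}, use the uniform bound on $\partial_\l{\cal Q}_j$ from Theorem \ref{property-Q} to kill the stochastic integral, evaluate the resulting log-normal expectations to get the ${\bf N}(d_\pm)$ formulas (truncating at $T-T_1$ via $\L_0\equiv+\infty$ on $[T-T_1,T]$), and read off \eqref{eq:coupled-IE} from continuity of ${\cal P}={\cal Q}_1-{\cal Q}_0$ at the free boundaries. The only cosmetic differences are that you carry out the Gaussian computations explicitly (the paper cites identities (A.7)--(A.8) of a prior work) and you invoke Krylov's generalized It\^{o} formula to justify the expansion for the $W^{1,2}_{p,loc}$ solution, which is if anything slightly more careful than the paper's appeal to It\^{o}'s lemma.
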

\begin{proof}
    Note that 
    \begin{equation*}
    \begin{cases}
    &-\left(\partial_t {\cal Q}_0 +{\cal L}{\cal Q}_0 +\e_0 \l -L_0\right)=\left[(\e_1-\e_0-r\zeta_0)\l-(L_1-L_0)\right] {\rm\bf I}_{\{\l\ge \L_0(t)\}},\vspace{2mm}\\ 
    &-\left(\partial_t {\cal Q}_1 +{\cal L}{\cal Q}_1 +\e_1 \l -L_1\right)=\left[(L_1-L_0)-(\e_1-\e_0+r\zeta_1)\l\right] {\rm\bf I}_{\{0<\l\le \L_1(t)\}}, \vspace{2mm}\\
    &{\cal Q}_0(T,\l) = {\cal Q}_1(T,\l)=0,\quad\l\in(0,+\infty).
    \end{cases}
\end{equation*}

Let us temporarily denote ${\cal Y}_s^{t,\l}$ by
\begin{equation}
    {\cal Y}_s^{t,\l} := \l e^{\b(s-t)}{\cal H}_s/{\cal H}_t.
\end{equation}

By applying It\^{o}'s lemma to $e^{-\b (s-t)}{\cal Q}_0(s,{\cal Y}_s^{t,\l})$, we have 
\begin{equation*}
    \begin{aligned}
        d\left(e^{-\b (s-t)}{\cal Q}_0(s,{\cal Y}_s^{t,\l} )\right)=e^{-\b(s-t)} \left(\partial_s {\cal Q}_0(s,{\cal Y}_s^{t,\l})+{\cal L}{\cal Q}_0(s,{\cal Y}_s^{t,\l})\right)ds -\t e^{-\b (s-t)} {\cal Y}_s^{t,\l} \partial_\l {\cal Q}_0(s,{\cal Y}_s^{t,\l})dB_s,
    \end{aligned}
\end{equation*}
or 
\begin{equation}\label{eq:Ito-Q0}
    {\cal Q}_0(t,\l) = -\int_t^T e^{-\b (s-t)} \left(\partial_s {\cal Q}_0(s,{\cal Y}_s^{t,\l})+{\cal L}{\cal Q}_0(s,{\cal Y}_s^{t,\l})\right)ds + \t\int_t^T e^{-\b (s-t)}{\cal Y}_s^{t,\l} \partial_{\l}{\cal Q}_0(s,{\cal Y}_s^{t,\l})dB_s.
\end{equation}

Since $\partial_\l {\cal Q}_0$ is bounded (see Theorem \ref{property-Q}), it is easy to check that 
\begin{equation*}
    \mathbb{E}_t\left[\int_t^T \left(e^{-\b(s-t)}{\cal Y}_s^{t,\l} \partial_{\l}{\cal Q}_0(s,{\cal Y}_s^{t,\l}\right)^2 ds\right]<\infty.
\end{equation*}
That is, 
\begin{equation*}
    \mathbb{E}_t \left[\int_t^T e^{-\b(s-t)}{\cal Y}_s^{t,\l} \partial_{\l}{\cal Q}_0(s,{\cal Y}_s^{t,\l} ) dB_s\right]=0.
\end{equation*}

Taking the conditional expectation at time $t$ on the both side of \eqref{eq:Ito-Q0} yields that 
\begin{equation}\label{eq:integral_Q0-expectation}
    \begin{aligned}
        {\cal Q}_0(t,\l)=&-\mathbb{E}_t\left[\int_t^T e^{-\b (s-t)} \left(\partial_s {\cal Q}_0(s,{\cal Y}_s^{t,\l})+{\cal L}{\cal Q}_0(s,{\cal Y}_s^{t,\l})\right)ds\right]\\
=&\mathbb{E}_t\left[\int_t^T e^{-\b(s-t)}\left\{\e_0{\cal Y}_s^{t,\l} -L_0+\left[(\e_1-\e_0-r\zeta_0){\cal Y}_s^{t,\l}-(L_1-L_0)\right]{\rm\bf I}_{\{{\cal Y}_s^{t,\l} \ge \L_0(s)\}}\right\}ds\right]\\
=&\mathbb{E}_t\left[\int_t^{T-T_1} e^{-\b(s-t)}\left((\e_1-\e_0-r\zeta_0){\cal Y}_s^{t,\l}-(L_1-L_0)\right){\rm\bf I}_{\{{\cal Y}_s^{t,\l} \ge \L_0(s)\}}ds\right]{\rm\bf I}_{\{t\in[0,T-T_1)\}}\\
+&\dfrac{1-e^{-r(T-t)}}{r}\e_0\l -\dfrac{1-e^{-\b(T-t)}}{\b}L_0,
    \end{aligned}
\end{equation}
where we have used the fact that $\L_0(t)=+\infty$ for $t\in[T-T_1,T)$.

Similarly,
\begin{equation}\label{eq:integral_Q1-expectation}
    \begin{aligned}
        {\cal Q}_1(t,\l) =&\mathbb{E}_t\left[\int_t^T e^{-\b(s-t)}\left((L_1-L_0)-(\e_1-\e_0+r\zeta_1){\cal Y}_s^{t,\l}\right){\rm\bf I}_{\{ 0<{\cal Y}_s^{t,\l} \le \L_1(s)\}} ds\right]\\
        +&\dfrac{1-e^{-r(T-t)}}{r}\e_1\l -\dfrac{1-e^{-\b(T-t)}}{\b}L_1.
    \end{aligned}
\end{equation}
By applying the equalities (A.7) and (A.8) in \citet{JEON2022125542} to the equations \eqref{eq:integral_Q0-expectation} and \eqref{eq:integral_Q1-expectation}, we can easily obtain the integral equation representations for ${\cal Q}_0$ and ${\cal Q}_1$ as provided in the theorem's statement.

Since ${\cal P}(t,\l)={\cal Q}_1(t,\l)-{\cal Q}_0(t,\l)$ is continuously differentiable function of $\l$ (see Theorem \ref{theorem for free boundaires}), we can directly obtain that 
\begin{equation*}
        \begin{aligned}
            \zeta_0 \L_0 (t) &= {\cal Q}_1(t,\L_0(t))-{\cal Q}_0(t,\L_0(t))\;\;\mbox{for}\;\;t\in[0,T-T_1),\vspace{2mm}\\
            \zeta_1 \L_1 (t) &= {\cal Q}_1(t,\L_1(t))-{\cal Q}_0(t,\L_1(t))\;\;\mbox{for}\;\;t\in[0,T).
        \end{aligned}
    \end{equation*}

\end{proof}

For $j\in\{0,1\}$, let us denote ${\cal W}_j(t,\l)$ and $\Pi_j(t,\l)$ by 
\begin{equation}
    {\cal X}_j(t,\l) = -\partial_{\l}\widehat{\cal Q}_j(t,\l)\;\;\mbox{and}\;\;\Pi_j(t,\l)=\dfrac{\t}{\s}\l\partial_{\l\l}\widehat{\cal Q}_j(t,\l).
\end{equation}
Since $\widehat{\cal Q}_j(t,\l)={\cal Q}_R(t,\l)+{\cal Q}_j(t,\l)$, it follows from Proposition \ref{pro:integral-equations} that 
  \begin{footnotesize}
        \begin{equation}\label{eq:wealth-IE}
        \begin{aligned}
            {\cal W}_0(t,\l)=&\dfrac{1-e^{-K(T_D-t)}}{K}\l^{-\frac{1}{\g}}-\dfrac{1-e^{-r(T-t)}}{r}\e_0 -\left\{
           (\e_1-\e_0-r\zeta_0)\int_t^{T-T_1} e^{-r (s-t)}{\bf N}\left(d_+(s-t,\dfrac{\l}{\Lambda_0(s)})\right)ds\right.\\ +&\left.(\e_1-\e_0-r\zeta_0)\int_t^{T-T_1} e^{-r (s-t)}{\bf n}\left(d_+(s-t,\dfrac{\l}{\Lambda_0(s)})\right)\dfrac{1}{\t\sqrt{s-t}}ds\right.\\-&\left.(L_1-L_0)\int_t^{T-T_1} e^{-\b(s-t)}{\bf n}\left(d_-(s-t,\dfrac{\l}{\Lambda_0(s)})\right)\dfrac{1}{\l\t\sqrt{s-t}}ds\right\}{\rm\bf I}_{\{t\in[0,T-T_1)\}},\\
            {\cal W}_1(t,\l)=&\dfrac{1-e^{-K(T_D-t)}}{K}\l^{-\frac{1}{\g}}-\dfrac{1-e^{-r(T-t)}}{r}\e_1 +
           (\e_1-\e_0+r\zeta_1)\int_t^{T} e^{-r (s-t)}{\bf N}\left(-d_+(s-t,\dfrac{\l}{\Lambda_1(s)})\right)ds\\-&(\e_1-\e_0+r\zeta_1)\int_t^{T} e^{-r (s-t)}{\bf n}\left(-d_+(s-t,\dfrac{\l}{\Lambda_1(s)})\right)\dfrac{1}{\t\sqrt{s-t}}ds\\+&(L_1-L_0)\int_t^{T} e^{-\b(s-t)}{\bf n}\left(-d_-(s-t,\dfrac{\l}{\Lambda_1(s)})\right)\dfrac{1}{\l\t\sqrt{s-t}}ds
                 \end{aligned}
    \end{equation}
  \end{footnotesize}
 and
    \begin{footnotesize}
         \begin{equation}\label{eq:investment-IE}
        \begin{aligned}
            \Pi_0(t,\l)=&\dfrac{\t}{\s}\dfrac{1-e^{-K(T_D-t)}}{\g K}\l^{-\frac{1}{\g}}-\dfrac{\t}{\s}\left\{
           (\e_1-\e_0-r\zeta_0)\int_t^{T-T_1} e^{-r (s-t)}{\bf n}\left(d_+(s-t,\dfrac{\l}{\Lambda_0(s)})\right)\dfrac{d_-(s-t,\dfrac{\l}{\L_0(s)})}{\t^2(s-t)}ds \right.\\-&\left.(L_1-L_0)\int_t^{T-T_1} e^{-\b(s-t)}{\bf n}\left(d_-(s-t,\dfrac{\l}{\Lambda_0(s)})\right)\dfrac{d_+(s-t,\dfrac{\l}{\L_0(s)})}{\l\t^2(s-t)}ds\right\}{\rm\bf I}_{\{t\in[0,T-T_1)\}},\\
           \Pi_1(t,\l)=&\dfrac{\t}{\s}\dfrac{1-e^{-K(T_D-t)}}{\g K}\l^{-\frac{1}{\g}}-\dfrac{\t}{\s}\left\{
           (\e_1-\e_0+r\zeta_1)\int_t^{T} e^{-r (s-t)}{\bf n}\left(-d_+(s-t,\dfrac{\l}{\Lambda_1(s)})\right)\dfrac{d_-(s-t,\dfrac{\l}{\L_1(s)})}{\t^2(s-t)}ds\right.\\-&\left.(L_1-L_0)\int_t^{T} e^{-\b(s-t)}{\bf n}\left(-d_-(s-t,\dfrac{\l}{\Lambda_1(s)})\right)\dfrac{1}{\l\t\sqrt{s-t}}\dfrac{d_+(s-t,\dfrac{\l}{\L_1(s)})}{\l\t^2(s-t)}ds\right\},
        \end{aligned}
    \end{equation}
    \end{footnotesize}
where ${\bf n}(\cdot)$ is the probability density function of the standard normal distribution.
    
From Theorem \ref{thm:main}, we have 
$$
c_t^*=({\cal Y}_t^*)^{-\frac{1}{\g}},\;\;W_t^{*}={\cal W}_{\Upsilon_t^{j,*}}(t,{\cal Y}_t^*)\;\;\mbox{and}\;\;\pi_t^{*}=\Pi_{\Upsilon_t^{j,*}}(t,{\cal Y}_t^*)
$$
We can obtain precise numerical solutions for the coupled integral equations in \eqref{eq:coupled-IE} satisfied by $\Lambda_0(t)$ and $\Lambda_1(t)$ by utilizing the recursive integration method proposed by \citet{Huang}. Based on the numerical solutions obtained for $\Lambda_0(t)$ and $\Lambda_1(t)$ in this manner, we can accurately and easily compute the integral equations in \eqref{eq:wealth-IE} and \eqref{eq:investment-IE} governing wealth and investment within each job ${\cal D}_i\;(j=0,1)$ by simply employing the trapezoidal rule. For more detailed process of solving the coupled integral equations using the recursive integration method, please refer to \citet{JEON201973}, \citet{JEON2019101049}, \citet{JEON2021113508}. 

Figure \ref{fig:boundary} illustrates the impact of increasing job-switching cost $\zeta_j$ on the optimal job-switching boundary $w_j(t)$ for each $j\in\{0,1\}$. As can be seen in the figure, as the cost increases, the boundary $w_0(t)$ for deciding to switch to job ${\cal D}_1$ decreases, while the boundary $w_1(t)$ for deciding to switch to job ${\cal D}_0$ increases. In other words, as job-switching costs rise, the decision to switch is delayed further.

Figure \ref{fig:strategy-0} and Figure \ref{fig:strategy-1} present the impact of the costs associated with switching from one job to another within each respective job on the consumption and investment strategy. In job ${\cal D}_0$, we can observe that as the cost of switching to ${\cal D}_1$ increases, both consumption and investment decrease. However, in job ${\cal D}_1$, as the cost of switching to ${\cal D}_0$ increases, investment decreases while consumption increases.

Figure \ref{fig:strategy-0-T} and Figure \ref{fig:strategy-1-T} depict the impact of the mandatory retirement date $T$ on the agent's optimal consumption and investment in each respective job. In job ${\cal D}_0$, as the mandatory retirement date increases, the agent increases optimal investment, while consumption rises at lower wealth levels and declines at higher wealth levels with the increasing mandatory retirement date $T$. In job ${\cal D}_0$, as the mandatory retirement date ($T$) increases, the agent increases optimal investment, while consumption rises at lower wealth levels and decreases at higher wealth levels with the increasing $T$. In job ${\cal D}_1$, with the increasing mandatory retirement date ($T$), both consumption and investment rise.

\begin{figure}[!ht]
	\centering
    	\subfigure[$w_0(t)$]{\includegraphics[scale=0.6]{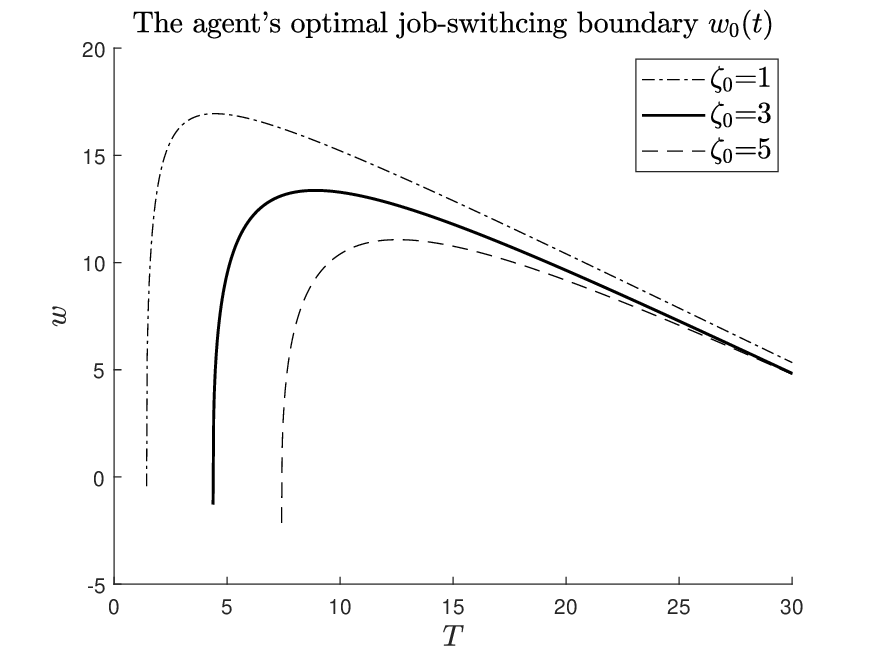}}
	\subfigure[$w_1(t)$]{\includegraphics[scale=0.6]{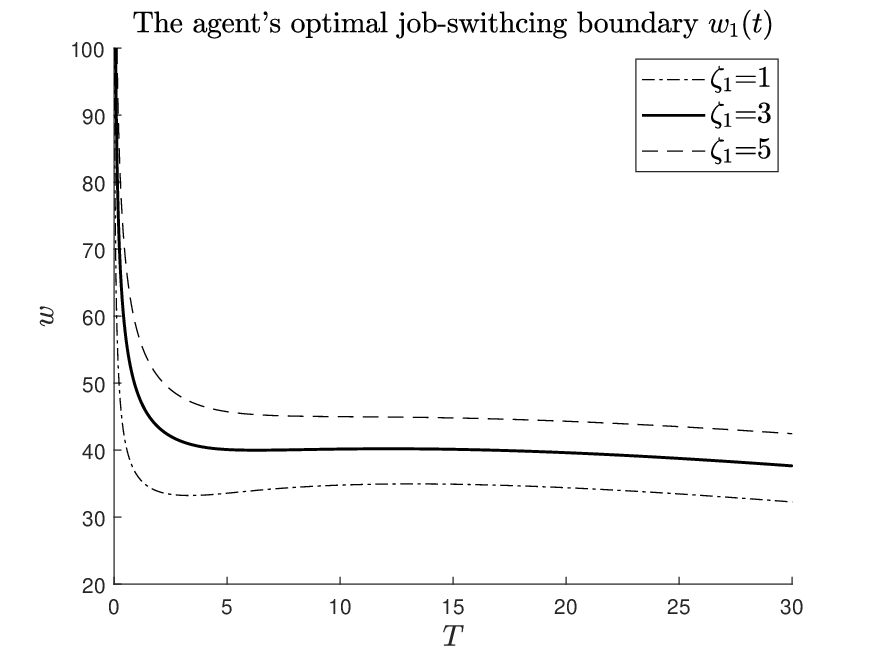}}
	\caption{The optimal job-switching boundaries $w_0(t)$ and $w_1(t)$. The parameters are given by $\b=0.02$, $r=0.01$, $\mu=0.07$, $\s=0.2$, $\e_0=0.3$, $\e_1=1$, $L_0=0.5$, $L_1=1$, $T=30$, $T_2=20$, $\zeta_0=3$, $\zeta_1=1$, and $\g=3$.\label{fig:boundary}}
\end{figure}

\begin{figure}[!ht]
	\centering
    	\subfigure[$c_t$ and $W_t$]{\includegraphics[scale=0.6]{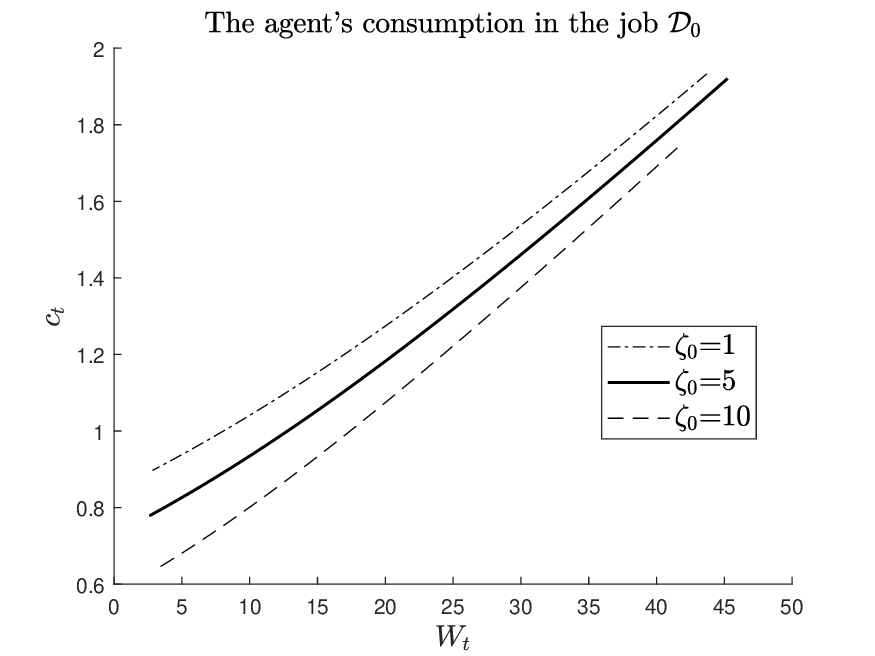}}
	\subfigure[$\pi_t$ and $W_t$]{\includegraphics[scale=0.6]{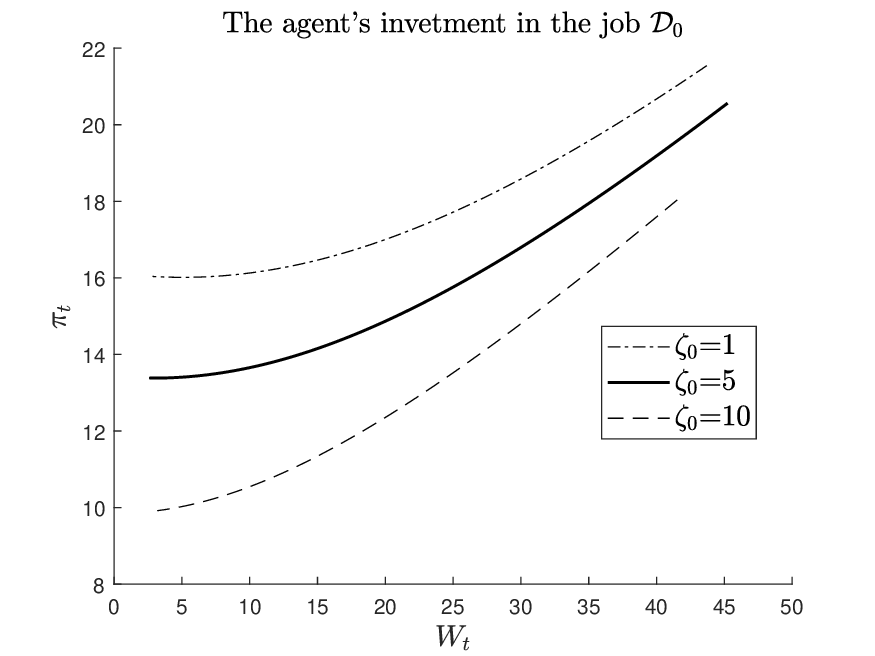}}
	\caption{The optimal consumption and investment strategy in the job ${\cal D}_0$. The parameters are given by $\b=0.02$, $r=0.01$, $\mu=0.07$, $\s=0.2$, $\e_0=0.3$, $\e_1=1$, $L_0=0.5$, $L_1=1$, $T=30$, $T_2=20$, $\zeta_0=3$, $\zeta_1=1$, and $\g=3$.\label{fig:strategy-0}}
\end{figure}
\begin{figure}[!ht]
	\centering
    	\subfigure[$c_t$ and $W_t$]{\includegraphics[scale=0.6]{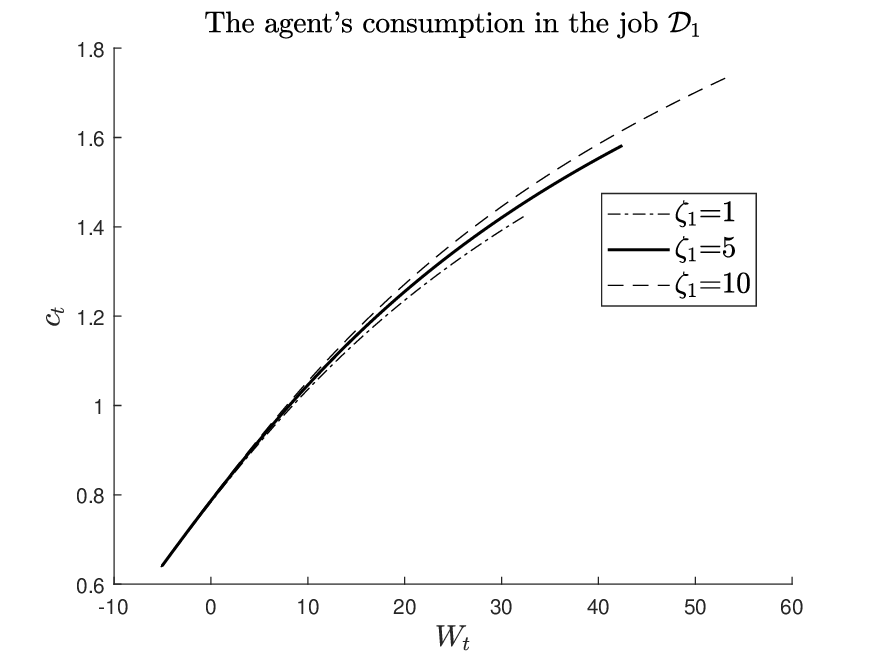}}
	\subfigure[$\pi_t$ and $W_t$]{\includegraphics[scale=0.6]{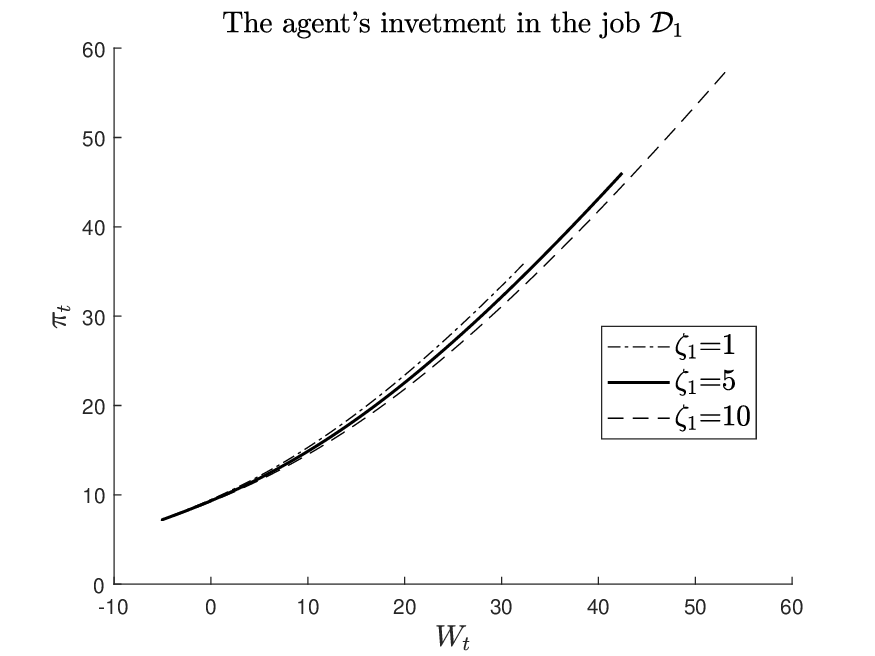}}
	\caption{The optimal consumption and investment strategy in the job ${\cal D}_1$. The parameters are given by $\b=0.02$, $r=0.01$, $\mu=0.07$, $\s=0.2$, $\e_0=0.3$, $\e_1=1$, $L_0=0.5$, $L_1=1$, $T=30$, $T_2=20$, $\zeta_0=3$, $\zeta_1=1$, and $\g=3$.\label{fig:strategy-1}}
\end{figure}

\begin{figure}[!ht]
	\centering
    	\subfigure[$c_t$ and $W_t$]{\includegraphics[scale=0.6]{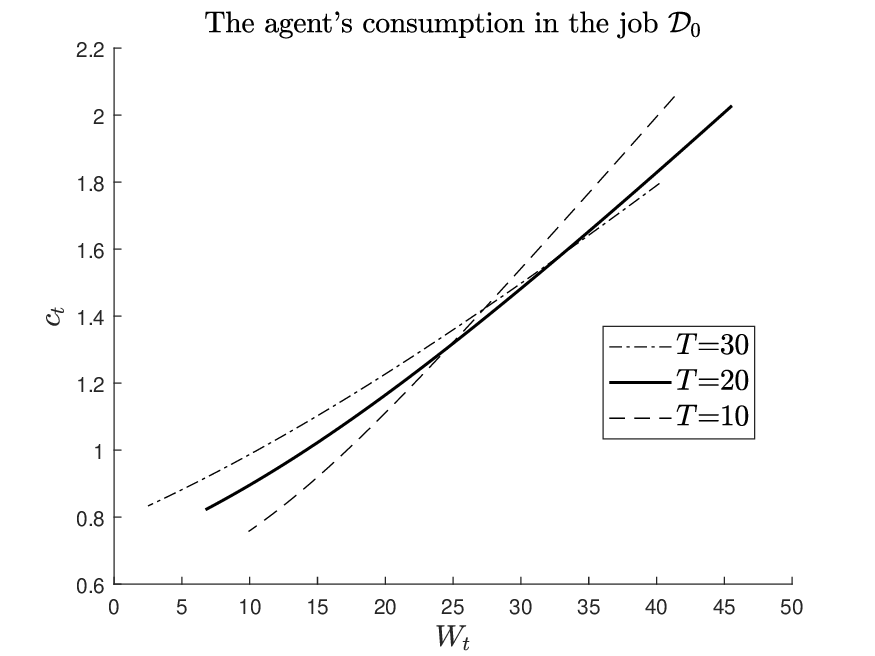}}
	\subfigure[$\pi_t$ and $W_t$]{\includegraphics[scale=0.6]{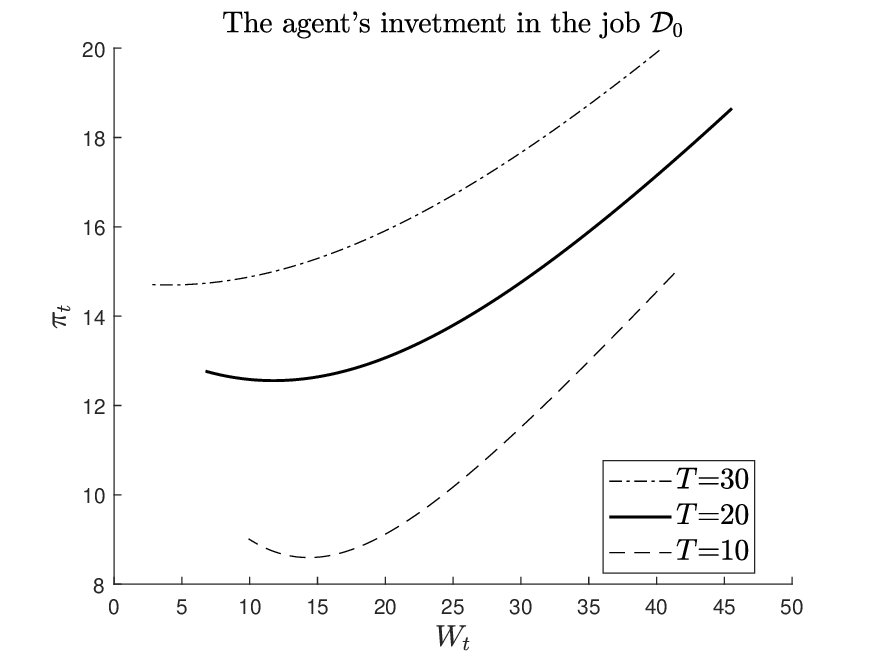}}
	\caption{The optimal consumption and investment strategy in the job ${\cal D}_0$. The parameters are given by $\b=0.02$, $r=0.01$, $\mu=0.07$, $\s=0.2$, $\e_0=0.3$, $\e_1=1$, $L_0=0.5$, $L_1=1$, $T_2=20$, $\zeta_0=3$, $\zeta_1=1$, and $\g=3$.\label{fig:strategy-0-T}}
\end{figure}

\begin{figure}[!ht]
	\centering
    	\subfigure[$c_t$ and $W_t$]{\includegraphics[scale=0.6]{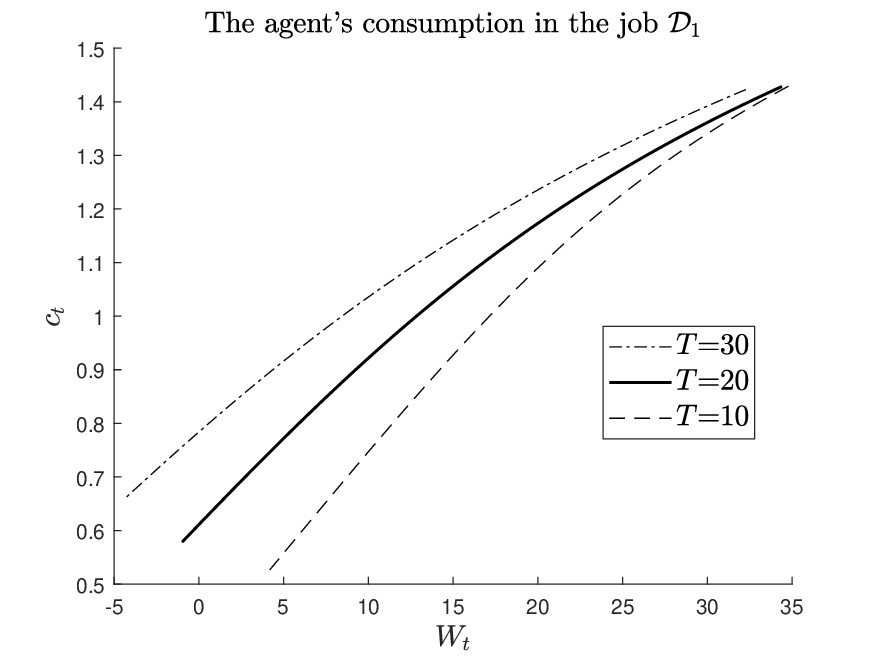}}
	\subfigure[$\pi_t$ and $W_t$]{\includegraphics[scale=0.6]{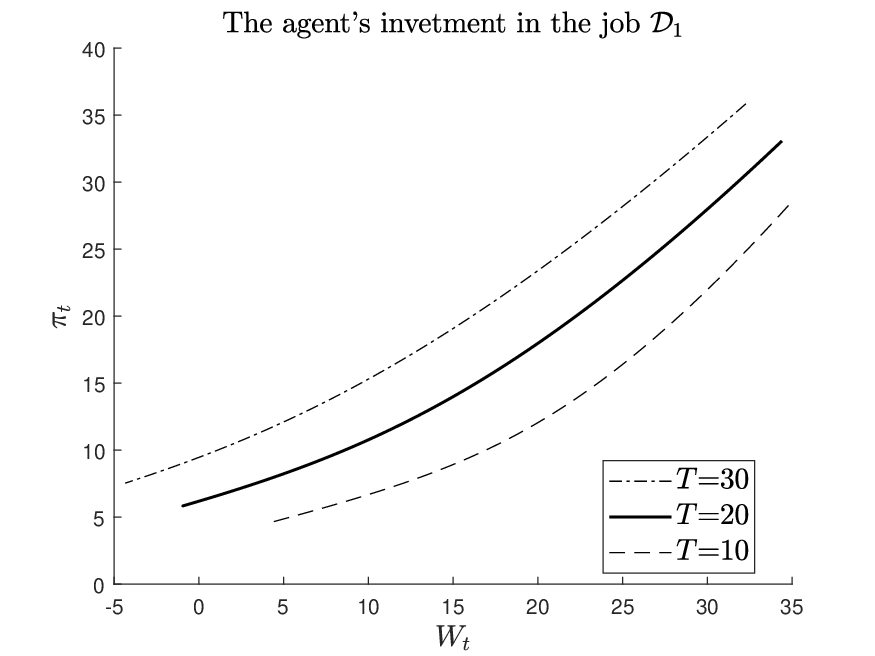}}
	\caption{The optimal consumption and investment strategy in the job ${\cal D}_0$. The parameters are given by $\b=0.02$, $r=0.01$, $\mu=0.07$, $\s=0.2$, $\e_0=0.3$, $\e_1=1$, $L_0=0.5$, $L_1=1$, $T_2=20$, $\zeta_0=3$, $\zeta_1=1$, and $\g=3$.\label{fig:strategy-1-T}}
\end{figure}

\section{Concluding Remarks}\label{sec:conclusion}

Our study in this paper focuses on the optimal consumption and investment problem for an agent, considering both mandatory retirement and job-switching costs. The agent's optimization problem encompasses two key features: finite-horizon optimal switching and stochastic control. Furthermore, to the best of our knowledge, our paper is the first to incorporate both job-switching costs and a mandatory retirement date into the optimal consumption and investment problem. Using the dual-martingale approach, we define a dual problem represented as a finite-horizon pure optimal switching problem.

From this optimal switching problem, we derive a parabolic double obstacle problem and employ non-standard techniques, combined with PDE theory, to thoroughly examine not only the solutions to this double obstacle problem but also the analytical properties of the related two free boundaries. Furthermore, we construct the optimal job-switching strategy using these two free boundaries. This approach allows us to recover the solution to the optimal switching problem from the solution to the double obstacle problem. Through the duality theorem, we not only characterize optimal strategies but also derive their integral equation representations, enabling us to provide numerical solutions.

Our model can be extended to incorporate various factors, such as ambiguity, non-Markovian environments, multiple jobs, partial information, and more. These extensions will be explored in future research.

\bibliographystyle{apalike}
\bibliography{Yang-Jeon-230923}

\begin{thebibliography}{}

\bibitem[Bensoussan and Lions, 1987]{BL}
Bensoussan, A. and Lions, J.-L. (1987).
\newblock {\em { Impulse Control and Quasi Variational Inequalities}}.
\newblock John Wiley \& Sons Canada, Limited.

\bibitem[Boufoussi et~al., 2023]{BOUFOUSSI2023126947}
Boufoussi, B., Hamadène, S., and Jakani, M. (2023).
\newblock Viscosity solutions of system of pdes with interconnected obstacles
  and nonlinear neumann boundary conditions.
\newblock {\em Journal of Mathematical Analysis and Applications},
  522(1):126947.

\bibitem[Chen et~al., 2022]{CJW22}
Chen, K., Jeon, J., and Wong, H. (2022).
\newblock {Optimal Retirement Problem under Partial Information}.
\newblock {\em Mathematics of Operations Research}, 47(3):1802--1832.

\bibitem[Chen et~al., 2012]{CHEN2012928}
Chen, X., Chen, Y., and Yi, F. (2012).
\newblock Parabolic variational inequality with parameter and gradient
  constraints.
\newblock {\em Journal of Mathematical Analysis and Applications},
  385(2):928--946.

\bibitem[Dai et~al., 2010a]{Dai-et-al-2010}
Dai, M., Xu, Z.~Q., and Zhou, X.~Y. (2010a).
\newblock Continuous-time markowitz's model with transaction costs.
\newblock {\em SIAM Journal on Financial Mathematics}, 1(1):96--125.

\bibitem[Dai and Yang, 2016]{DZ16}
Dai, M. and Yang, Z. (2016).
\newblock {A note on finite horizon optimal investment and consumption with
  transaction costs}.
\newblock {\em Discrete and Continuous Dynamical Systems - Series B},
  21(5):1445--1454.

\bibitem[Dai et~al., 2016]{Dai-et-al-2016}
Dai, M., Yang, Z., Zhang, Q., and Zhu, Q.~J. (2016).
\newblock Optimal trend following trading rules.
\newblock {\em Mathematics of Operations Research}, 41(2):626--642.

\bibitem[Dai and Yi, 2009]{DAI20091445}
Dai, M. and Yi, F. (2009).
\newblock Finite-horizon optimal investment with transaction costs: A parabolic
  double obstacle problem.
\newblock {\em Journal of Differential Equations}, 246(4):1445--1469.

\bibitem[Dai et~al., 2010b]{Dai2010}
Dai, M., Zhang, Q., and Zhu, Q.~J. (2010b).
\newblock Trend following trading under a regime switching model.
\newblock {\em SIAM Journal on Financial Mathematics}, 1(1):780--810.

\bibitem[Djehiche and Hamad\`{e}ne, 2009]{DJEHICHE}
Djehiche, B. and Hamad\`{e}ne, S. (2009).
\newblock On a finite horizon starting and stopping problem with risk of
  abandonment.
\newblock {\em International Journal of Theoretical and Applied Finance},
  12(04):523--543.

\bibitem[Djehiche et~al., 2009]{Djehiche2009}
Djehiche, B., Hamad\`{e}ne, S., and Popier, A. (2009).
\newblock A finite horizon optimal multiple switching problem.
\newblock {\em SIAM Journal on Control and Optimization}, 48(4):2751--2770.

\bibitem[El~Asri and Fakhouri, 2016]{Asri}
El~Asri, B. and Fakhouri, I. (2016).
\newblock {Viscosity solutions for a system of PDEs and optimal switching}.
\newblock {\em IMA Journal of Mathematical Control and Information},
  34(3):937--960.

\bibitem[Evans, 2010]{Evans}
Evans, L. (2010).
\newblock {\em Partial Differential Equations}.
\newblock American Mathematical Society, Providence, RI.

\bibitem[Friedman, 1975]{F2}
Friedman, A. (1975).
\newblock {Parabolic variational inequalities in one space dimension and
  smoothness of the free boundary}.
\newblock {\em Journal of Functional Analysis}, 18(2):151--176.

\bibitem[Friedman, 1982]{F1}
Friedman, A. (1982).
\newblock {\em Variational Principles and Free-boundary Problems}.
\newblock Wiley.

\bibitem[Hamad\`{e}ne and Jeanblanc, 2007]{Hamad2007}
Hamad\`{e}ne, S. and Jeanblanc, M. (2007).
\newblock On the starting and stopping problem: Application in reversible
  investments.
\newblock {\em Mathematics of Operations Research}, 32(1):182--192.

\bibitem[Hamadène and Zhang, 2010]{HAMADENE2010403}
Hamadène, S. and Zhang, J. (2010).
\newblock Switching problem and related system of reflected backward sdes.
\newblock {\em Stochastic Processes and their Applications}, 120(4):403--426.

\bibitem[Huang et~al., 1996]{Huang}
Huang, J., Subrahmanyam, M., and Yu, G. (1996).
\newblock {Pricing and hedging American options: A recursive integration
  method}.
\newblock {\em Review of Financial Studies}, 9(1):277--300.

\bibitem[Jeon and Kim, 2019a]{JEON201973}
Jeon, J. and Kim, G. (2019a).
\newblock An integral equation approach for optimal investment policies with
  partial reversibility.
\newblock {\em Chaos, Solitons \& Fractals}, 125:73--78.

\bibitem[Jeon and Kim, 2019b]{JEON2019101049}
Jeon, J. and Kim, G. (2019b).
\newblock Pricing european continuous-installment strangle options.
\newblock {\em The North American Journal of Economics and Finance}, 50:101049.

\bibitem[Jeon et~al., 2023]{JKY2023}
Jeon, J., Kim, T., and Yang, Z. (2023).
\newblock {The Finite-Horizon Consumption-Investment and Retirement Problem
  with Borrowing Constraint}.
\newblock {\em working paper, \url{http://dx.doi.org/10.2139/ssrn.4364441}},
  pages 1--66.

\bibitem[Jeon and Kwak, 2021]{JEON2021113508}
Jeon, J. and Kwak, M. (2021).
\newblock Pricing variable annuity with surrender guarantee.
\newblock {\em Journal of Computational and Applied Mathematics}, 393:113508.

\bibitem[Jeon and Oh, 2022]{JEON2022125542}
Jeon, J. and Oh, J. (2022).
\newblock Finite horizon portfolio selection problem with a drawdown constraint
  on consumption.
\newblock {\em Journal of Mathematical Analysis and Applications},
  506(1):125542.

\bibitem[Jeon and Park, 2023]{JEON2023127777}
Jeon, J. and Park, K. (2023).
\newblock Optimal job switching and retirement decision.
\newblock {\em Applied Mathematics and Computation}, 443:127777.

\bibitem[Jeon and Shim, 2023]{JeonShim}
Jeon, J. and Shim, G. (2023).
\newblock Optimal consumption and investment with a costly reversible
  job-switching option.
\newblock {\em Working paper, \url{http://dx.doi.org/10.2139/ssrn.4553244}},
  pages 1--34.

\bibitem[Karatzas and Shreve, 1991]{KS2}
Karatzas, I. and Shreve, S. (1991).
\newblock {\em Brownian Motion and Stochastic Calculus}.
\newblock Springer-Verlag New York.

\bibitem[Lee et~al., 2019]{LSS19}
Lee, H.-S., Shim, G., and Shin, Y.~H. (2019).
\newblock Borrowing constraints, effective flexibility in labor supply, and
  portfolio selection.
\newblock {\em Mathematics and Financial Economics}, 13(2):173--208.

\bibitem[Lieberman., 1996]{Li96}
Lieberman., G. (1996).
\newblock {\em Second Order Parabolic Differential Equations}.
\newblock World Scientific.

\bibitem[Martyr, 2016]{Martyr}
Martyr, R. (2016).
\newblock Finite-horizon optimal multiple switching with signed switching
  costs.
\newblock {\em Mathematics of Operations Research}, 41(4):1432--1447.

\bibitem[Merton, 1969]{M69}
Merton, R. (1969).
\newblock {Lifetime Portfolio Selection under Uncertainty: The continuous-time
  Case}.
\newblock {\em Review of Economics and Statistics}, 51(3):247--257.

\bibitem[Merton, 1971]{M71}
Merton, R. (1971).
\newblock {Optimum Consumption and Portfolio Rules in a Continuous-time Model}.
\newblock {\em Journal of Economic Theory}, 3(1-2):373--413.

\bibitem[Park and Wong, 2023]{PW-sicon}
Park, K. and Wong, H.~Y. (2023).
\newblock Robust retirement with return ambiguity: Optimal
  \(\boldsymbol{G}\)-stopping time in dual space.
\newblock {\em SIAM Journal on Control and Optimization}, 61(3):1009--1037.

\bibitem[Shim et~al., 2018]{SKS18}
Shim, G., Koo, J.~L., and Shin, Y.~H. (2018).
\newblock Reversible job-switching opportunities and portfolio selection.
\newblock {\em Applied Mathematics \& Optimization}, 77(2):1563--1581.

\bibitem[Shim and Shin, 2014]{SHIM2014145}
Shim, G. and Shin, Y.~H. (2014).
\newblock An optimal job, consumption/leisure, and investment policy.
\newblock {\em Operations Research Letters}, 42(2):145--149.

\bibitem[Yang and Koo, 2018]{YK}
Yang, Z. and Koo, H. (2018).
\newblock {Optimal Consumption and Portfolio Selection with Early Retirement
  Option}.
\newblock {\em Mathematics of Operations Research}, 43(4):1378--1404.

\bibitem[Yang et~al., 2021]{YKS21}
Yang, Z., Koo, H., and Shin, Y. (2021).
\newblock {Optimal Retirement in a General Market Environment}.
\newblock {\em Applied Mathematics \& Optimization}, 21:1083--1130.

\bibitem[Yi and Yang, 2008]{Yi}
Yi, F. and Yang, Z. (2008).
\newblock {A variational inequality arising from European option pricing with
  transaction costs}.
\newblock {\em Science in China Series A: Mathematics}, 51(5):935--954.

\end{thebibliography}

\end{document}